\newtheorem{thm}{Theorem}[section]
\newtheorem{lemma}[thm]{Lemma}
\newtheorem{conj}[thm]{Conjecture}
\newtheorem{prop}[thm]{Proposition}
\newtheorem{coroll}[thm]{Corollary}
\newtheorem{claim}[thm]{Claim}
\theoremstyle{definition}
\newtheorem{defin}[thm]{Definition}
\newtheorem{rem}[thm]{Remark}
\newtheorem{exam}[thm]{Example}
\newtheorem{notation}[thm]{Notation}
\newtheorem*{convention}{Convention}
\newcommand{\R}{{\mathbb{R}}}
\newcommand{\F}{{\mathbb{F}}}
\newcommand{\T}{{\mathbb{T}}}
\newcommand{\Z}{{\mathbb{Z}}}
\newcommand{\N}{{\mathbb{N}}}
\newcommand{\C}{{\mathbb{C}}}
\newcommand{\cA}{{\mathcal{A}}}
\newcommand{\cB}{{\mathcal{B}}}
\newcommand{\cC}{{\mathcal{C}}}
\newcommand{\cD}{{\mathcal{D}}}
\newcommand{\cF}{{\mathcal{F}}}
\newcommand{\cG}{{\mathcal{G}}}
\newcommand{\cH}{{\mathcal{H}}}
\newcommand{\cI}{{\mathcal{I}}}
\newcommand{\cK}{{\mathcal{K}}}
\newcommand{\cL}{{\mathcal{L}}}
\newcommand{\cM}{{\mathcal{M}}}
\newcommand{\cP}{{\mathcal{P}}}
\newcommand{\cQ}{{\mathcal{Q}}}
\newcommand{\cR}{{\mathcal{R}}}
\newcommand{\cS}{{\mathcal{S}}}
\newcommand{\cT}{{\mathcal{T}}}
\newcommand{\cU}{{\mathcal{U}}}
\newcommand{\cX}{{\mathcal{X}}}
\newcommand{\fc}{{:\ }}
\newcommand{\ol}{\overline}
\newcommand{\wt}{\widetilde}
\newcommand{\wh}{\widehat}
\newcommand{\tb}{\textbf}
\DeclareMathOperator{\sgn}{sgn}
\DeclareMathOperator{\Crit}{Crit}
\DeclareMathOperator{\im}{im}
\DeclareMathOperator{\id}{id}
\DeclareMathOperator{\area}{area}
\DeclareMathOperator{\Vol}{Vol}
\DeclareMathOperator{\PSS}{PSS}
\DeclareMathOperator{\cl}{cl}
\DeclareMathOperator{\rk}{rk}
\DeclareMathOperator{\pr}{pr}
\DeclareMathOperator{\uw}{uw}
\DeclareMathOperator{\pt}{pt}
\DeclareMathOperator{\ini}{ini}
\DeclareMathOperator{\ter}{ter}
\DeclareMathOperator{\tel}{tel}
\DeclareMathOperator{\cone}{cone}
\DeclareMathOperator{\co}{co}
\DeclareMathOperator{\Tor}{Tor}
\DeclareMathOperator{\Ham}{Ham}
\DeclareMathOperator{\Symp}{Symp}
\DeclareMathOperator{\Int}{Int}
\DeclareMathOperator{\diag}{diag}
\DeclareMathOperator{\res}{res}
\begin{document}

\title{Symplectic topology and ideal-valued measures}

\author{Adi Dickstein$^{1,3}$, Yaniv Ganor$^2$, Leonid Polterovich$^3$, and Frol Zapolsky}

\footnotetext[1]{Partially supported by the Milner Foundation}
\footnotetext[2]{Partially supported by the Israel Science Foundation grant 1715/18, partially supported by
Technion scholarship funds, and supported in part at the Technion by a fellowship from the Lady Davis Foundation}
\footnotetext[3]{Partially supported by the Israel Science Foundation grant 1102/20}

\date{}

\setcounter{tocdepth}{2}

\renewcommand{\labelenumi}{(\roman{enumi})}

\maketitle

\begin{abstract}
We adapt Gromov's notion of ideal-valued measures to symplectic topology, and use it for
proving new results on symplectic rigidity and symplectic intersections. Furthermore, it allows us to discuss three ``big fiber theorems''---the Centerpoint Theorem in combinatorial geometry, the Maximal Fiber Inequality in topology, and the Non-displaceable Fiber Theorem in symplectic topology---from a unified viewpoint. Our main technical tool is an enhancement of the  symplectic cohomology theory recently developed by Varolg\"une\c s.
\end{abstract}

\tableofcontents

\section{Introduction and main results}

\subsection{Three big fiber theorems}

In various fields of mathematics there exist ``big fiber'' theorems, which are of the following type:

\begin{center}
  \emph{For any map $f \fc X \to Y$ in a suitable class there is $y_0 \in Y$\\ such that the fiber $f^{-1}(y_0)$ is big.}
\end{center}

The ``suitable class'' and ``big'' have different interpretations in different fields. Here we will present three results which exemplify this principle.

\begin{thm} [{{\bf Maximal fiber inequality}, Gromov \cite[p.758]{gromov2009singularities}, \cite[p.425]{gromov2010singularities}}] \label{thm:Gromov_torus}
Let $Y$ be a metric space of covering dimension $d$, and let $p,n$ be positive integers such that $n \geq p(d+1)$. Then for any continuous map $f \fc \T^n \to Y$ there is $y_0 \in Y$ such that
$$\rk \big(\check H^*(\T^n) \to \check H^*(f^{-1}(y_0))\big) \geq 2^p\,.$$
\end{thm}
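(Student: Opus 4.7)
The strategy is to argue by contradiction, using a Lusternik--Schnirelmann cup-length argument coupled with Ostrand's refinement property for covers of $d$-dimensional spaces.

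Suppose, toward contradiction, that $r(y) := \rk(\check H^*(\T^n) \to \check H^*(f^{-1}(y))) < 2^p$ for every $y \in Y$. Since $\T^n$ is compact and \v{C}ech cohomology is continuous under decreasing intersections, each $y$ admits an open neighborhood $V_y$ with $\rk(\check H^*(\T^n) \to \check H^*(f^{-1}(V_y))) < 2^p$. Because $\dim Y \leq d$, Ostrand's theorem lets us refine $\{V_y\}$ to a cover of the form $\bigsqcup_{k=0}^d \cW_k$, where each family $\cW_k$ consists of pairwise disjoint open sets. Setting $W_k := \bigcup_{V \in \cW_k} V$ and $U_k := f^{-1}(W_k)$, we obtain an open cover $\T^n = U_0 \cup \cdots \cup U_d$ of cardinality $d+1$, each connected component of each $U_k$ still satisfying the rank bound $<2^p$.

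The plan is then to find, for each $k = 0, \ldots, d$, a monomial $\gamma_k = \alpha_{S_k}$ in $H^*(\T^n) = \Lambda^*(\alpha_1, \ldots, \alpha_n)$ of degree $|S_k| \leq p$ that vanishes under restriction to $U_k$, with the index sets $S_0, \ldots, S_d \subseteq \{1, \ldots, n\}$ pairwise disjoint. Granting this, the standard Lusternik--Schnirelmann relative cup-product argument lifts each $\gamma_k$ to $H^*(\T^n, U_k)$; the product lies in $H^*(\T^n, U_0 \cup \cdots \cup U_d) = H^*(\T^n, \T^n) = 0$, forcing $\gamma_0 \cup \cdots \cup \gamma_d = 0$ in $H^*(\T^n)$. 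But the product of exterior monomials in pairwise disjoint generator subsets is a nonzero monomial in $\Lambda^*(\alpha_1, \ldots, \alpha_n)$, a contradiction.

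The main obstacle is the monomial extraction, which rests on a combinatorial lemma: every graded ideal $I \subseteq \Lambda^*(\alpha_1, \ldots, \alpha_n)$ whose quotient has dimension strictly less than $2^p$ contains a nonzero monomial $\alpha_S$ with $|S| \leq p$. I would prove this by induction on $n$, using Frobenius duality for the exterior algebra together with a case analysis on $I \cap \Lambda^1$. Coordinating a \emph{single} monomial across the (possibly infinitely many) components of each $U_k$ requires an additional pigeonhole step: since there are only $\binom{n}{p}$ possible monomial supports, one further refines the $(d+1)$-colored cover so that a single monomial works for every component of a given color class. Finally, the disjointness of the supports $S_0, \ldots, S_d$ across $k$ is arranged using the slack $n \geq p(d+1)$.
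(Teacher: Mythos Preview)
Your overall strategy—contradiction, Ostrand/Palais refinement to $d+1$ color classes, then a relative cup-length argument—is precisely the skeleton of the paper's proof (Theorems~\ref{thm:central_point} and~\ref{thm:Gromov_central_point}). The gap is in your algebraic step. Your combinatorial lemma only promises that each graded ideal of codimension $<2^p$ contains \emph{some} monomial of degree $\leq p$; this is too weak to support the coordination and disjointness steps you outline. The pigeonhole move would force you to split each color class into up to $\sum_{j\le p}\binom{n}{j}$ sub-classes (one per possible support), inflating the number of colors far beyond $d+1$; the slack $n \geq p(d+1)$ is then nowhere near enough to find that many monomials with disjoint supports. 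Likewise, ``arranging disjointness using the slack'' presupposes you can \emph{choose} which monomial lies in each kernel, which your lemma does not provide.

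The fix is that a much stronger statement holds, and it is what the paper actually uses (Example~\ref{ex:rank_cohomology}): if $I \subset \Lambda(\alpha_1,\dots,\alpha_n)$ is a graded ideal of codimension $<2^p$, then $I$ contains \emph{every} degree-$p$ monomial $\alpha_S$, $|S|=p$. The argument is short: the subalgebra $\Lambda(\alpha_i : i \in S)$ has dimension $2^p$, so it meets $I$ nontrivially; any nonzero $\xi$ in the intersection pairs under Poincar\'e duality for $\Lambda(\alpha_i : i\in S)$ with some $\eta$ to give a nonzero multiple of $\alpha_S$, whence $\alpha_S \in I$. With this in hand, your coordination and disjointness steps become trivial: every degree-$p$ monomial already vanishes on every component of every $U_k$, so you may simply take $\gamma_k = \alpha_{kp+1}\wedge\cdots\wedge\alpha_{(k+1)p}$ for $k=0,\dots,d$. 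No pigeonhole, no further refinement.
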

\noindent Here $\check H^*$ stands for \v Cech cohomology with field coefficients. The suitable class consists of continuous maps into metric spaces of a given covering dimension and the result is that there is a ``big'' fiber, namely the restriction map from the cohomology of the ambient space $\T^n$ to that of the fiber has large rank.

\begin{thm} [{{\bf Topological centerpoint theorem}, Karasev, \cite[Theorem 5.1]{karasev2014covering}}] \label{thm:top_Tverberg} Let $Y$ be a metric space of covering dimension $d$, let $p$ be a positive integer, and put $n = p(d+1)$. Then for any continuous map $f \fc \Delta^n \to Y$, where $\Delta^n$ is the $n$-simplex, there is $y_0 \in Y$ such that $f^{-1}(y_0)$ intersects all the $pd$-dimensional faces of $\Delta^n$.
\end{thm}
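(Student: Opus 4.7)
The plan is a proof by contradiction via a KKM-type open-cover setup combined with the covering-dimension hypothesis and a topological obstruction.

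First, I would identify the $pd$-dimensional faces of $\Delta^n$ with $p$-element subsets $S \subset [n+1]$ via $F_S := \co\{v_i : i \notin S\}$. Suppose no fiber meets all the $F_S$; then for every $y \in Y$ there is some $S$ with $y \notin f(F_S)$. The sets $U_S := Y \setminus f(F_S)$ are open (since $F_S$ is compact, $f(F_S)$ is compact, hence closed in the Hausdorff space $Y$), and $\{U_S\}_{|S|=p}$ covers $Y$. Pulling back, $V_S := f^{-1}(U_S)$ is an open cover of $\Delta^n$ satisfying the KKM-type compatibility $V_S \cap F_S = \emptyset$; equivalently, $x \in V_S$ implies $\supp(x) \cap S \neq \emptyset$.

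Second, I would use $\dim Y \leq d$ to refine $\{U_S\}$ to an open cover $\{W_\alpha\}$ of multiplicity at most $d+1$, together with a labeling $\alpha \mapsto S(\alpha) \in \binom{[n+1]}{p}$ such that $W_\alpha \subset U_{S(\alpha)}$, and a subordinate partition of unity $\{\rho_\alpha\}$. Form the continuous map
\[
\Psi \fc \Delta^n \to p\Delta^n, \qquad \Psi(x) := \sum_\alpha \rho_\alpha(f(x))\,\chi_{S(\alpha)},
\]
where $\chi_S \in \{0,1\}^{n+1}$ is the characteristic vector of $S$ and $p\Delta^n := \{z \in \R^{n+1}_{\geq 0} : \sum_i z_i = p\}$. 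By the multiplicity bound, $\Psi(\Delta^n)$ lies in a polyhedral subcomplex of $p\Delta^n$ of dimension at most $d$, and the KKM-compatibility guarantees that $\Psi$ respects the face stratification of $\Delta^n$: whenever $x$ has support $\sigma$, every active label $S(\alpha)$ meets $\sigma$.

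Third, I would derive the contradiction by invoking a topological obstruction at the critical dimension $n = p(d+1)$. The idea is that existence of such a labeling is obstructed by an equivariant cohomology class for the natural $S_{n+1}$-action permuting $\binom{[n+1]}{p}$; concretely, one packages the above data into a $\Z_{p+1}$-equivariant map between configuration-type spaces and invokes a Borsuk--Ulam or Van Kampen-type nontriviality result. Alternatively, one applies the constraint method of Blagojevi\'c--Frick--Ziegler to reduce the centerpoint statement to a topological Tverberg theorem for $r = p+1$.

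The main obstacle I anticipate is exactly this last step. A naive Lebesgue-covering lemma applied directly to $\{V_S\}$ produces only a common point of the cover, which is insufficient: it shows some fiber lies in the interior of $\Delta^n$, but not that a fiber meets every $pd$-face. Bridging this gap genuinely uses the sharp relation $n = p(d+1)$ together with a nontrivial equivariant or colorful-KKM argument, which is the technical heart of Karasev's proof.
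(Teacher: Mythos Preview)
Your proposal has a genuine gap, which you yourself flag: step three is not a proof but a wish list. You set up a plausible KKM-type scenario and correctly observe that a naive Lebesgue-number argument is insufficient, but then gesture vaguely at ``a Borsuk--Ulam or Van Kampen-type nontriviality result'' or a reduction to topological Tverberg without specifying which equivariant map, which obstruction class, or which index computation would actually fire. The map $\Psi$ you build lands in a $d$-dimensional subcomplex of $p\Delta^n$, but you never say what property of $\Psi$ is supposed to be impossible. Without that, the argument does not close.

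The paper's proof is entirely different and sidesteps this difficulty. It passes through $\C P^n$ via the toric moment map $\Phi \fc \C P^n \to \Delta^n$: given $f \fc \Delta^n \to Y$, set $g = f \circ \Phi$. The preimage under $\Phi$ of a $pd$-face of $\Delta^n$ is a $pd$-dimensional projective linear subspace $C \subset \C P^n$, so it suffices to find a fiber of $g$ meeting every such $C$. This is done with the cohomology ideal-valued measure $\mu$ on $\C P^n$ and the ideal $I = \langle h^p \rangle$, where $h$ generates $H^2(\C P^n)$. Poincar\'e duality gives $I \subset \mu(C)$ for every such $C$, and since $I^{*(d+1)} = \langle h^{p(d+1)} \rangle = \langle h^n \rangle \neq 0$, the abstract centerpoint theorem for IVMs (Theorem~\ref{thm:central_point}) yields a point $y_0 \in Y$ whose $g$-fiber meets every set in $\cX_{I,\mu}$, hence every $C$.

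The contrast is instructive: the ``topological obstruction at the critical dimension $n = p(d+1)$'' that you were searching for is precisely the nonvanishing of $h^n$ in $H^*(\C P^n)$, and the IVM framework packages the cover/multiplicity argument (your steps one and two) into the short proof of Theorem~\ref{thm:central_point}. The toric lift replaces the missing equivariant machinery with an elementary cup-product computation.
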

\noindent Here we have continuous maps from simplexes into metric spaces of a given covering dimension, while a fiber is ``big'' if it intersects all the high-dimensional faces of the simplex. The affine
version of this theorem can be found in \cite{edelsbrunner2012algorithms}, a classical result
proved in a slightly different language by Rado, 1946 \cite{rado1946theorem}.

Our final sample result belongs to the field of symplectic topology. Recall that a \tb{symplectic manifold} is a pair $(M,\omega)$ where $M$ is a manifold and $\omega$ is a closed $2$-form on $M$ which is also nondegenerate, meaning that $\omega^{\wedge \frac 1 2 \dim M}$ is a volume form. Given $f \in C^\infty(M)$ its \tb{Hamiltonian vector field} $X_f$ is uniquely determined by the equation $\iota_{X_f}\omega = -df$. The \tb{Poisson bracket} of $f,g \in C^\infty(M)$ is the function $\{f,g\} = - \omega(X_f,X_g) = df(X_g)$; $f,g$ \tb{Poisson commute} if $\{f,g\}=0$. If $f \in C^\infty(M\times[0,1])$ is a time-dependent function, then integrating the Hamiltonian vector field $X^t_f$ of $f_t\equiv f(\cdot,t)$ yields a \tb{Hamiltonian isotopy} $\phi_f^t$. The set of $\phi_f^1$ for all such $f$ is the \tb{Hamiltonian group} $\Ham(M,\omega)$ of $(M,\omega)$. A set $S \subset M$ is \tb{displaceable} if there is $\phi \in \Ham(M,\omega)$ such that $\phi(S) \cap \ol S = \varnothing$, and \tb{non-displaceable} otherwise.

Let us describe the suitable class of maps defined on symplectic manifolds.
\begin{defin}\label{def:involutive}
  Let $(M,\omega)$ be a symplectic manifold and let $B$ be a smooth manifold. We call a smooth map $\pi \fc M \to B$ \tb{involutive} if for all $f,g \in C^\infty(B)$ we have $\{\pi^*f,\pi^*g\} \equiv 0$.
\end{defin}

\begin{thm} [{{\bf Non-displaceable fiber theorem},  Entov--Polterovich, \cite{entov2006quasi}}]\label{thm:EP_nondisplaceable_fiber} Let $(M,\omega)$ be a closed symplectic manifold. Then for any involutive map $\pi \fc M \to B$ there is $b_0 \in B$ such that $\pi^{-1}(b_0)$ is non-displaceable.
\end{thm}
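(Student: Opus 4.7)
The plan is to argue by contradiction: assume every fiber $\pi^{-1}(b)$ is displaceable, and derive a contradiction from the existence of a \emph{partial symplectic quasi-state}---a functional $\zeta \fc C(M) \to \R$ satisfying $\zeta(1)=1$, monotonicity, additivity on pairwise Poisson-commuting families, and vanishing on functions whose support is displaceable. Granted such a $\zeta$, the theorem falls out quickly. First, since $M$ is closed and the displacing Hamiltonian diffeomorphism and $\pi$ are continuous, displaceability of a single fiber propagates to displaceability of $\pi^{-1}(\overline{U_b})$ for some open neighborhood $U_b \ni b$, and compactness of $\pi(M)$ yields a finite subcover $\{U_1,\dots,U_N\}$. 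Second, pick a partition of unity $\{\rho_i\}$ on $\pi(M)$ subordinate to this cover. The pullbacks $\pi^*\rho_i$ pairwise Poisson-commute (because $\pi$ is involutive and the $\rho_i$ are functions on $B$), each is supported in a displaceable set, and they sum to $1$ on $M$. Applying $\zeta$ gives
\[
1 = \zeta(1) = \sum_{i=1}^N \zeta(\pi^*\rho_i) = 0,
\]
a contradiction.

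The bulk of the work is thus the construction of $\zeta$. I would mimic the Entov--Polterovich approach: fix an idempotent $e$ in a field factor of the quantum cohomology ring $QH^*(M,\omega)$ and set
\[
\zeta(H) = \lim_{k \to \infty} \frac{c(e,kH)}{k},
\]
where $c(e,H)$ is the spectral invariant detecting the class $e$ in the filtered Hamiltonian Floer homology of a time-dependent Hamiltonian $H$. Standard $C^0$-Lipschitz continuity of spectral invariants extends this definition from smooth Hamiltonians to all of $C(M)$. Normalization and monotonicity are immediate from the corresponding properties of $c(e,\cdot)$, and the quasi-state property---additivity on Poisson-commuting pairs---reduces, via the triangle inequality $c(e \ast e,\, F \# G) \leq c(e,F) + c(e,G)$ combined with idempotence of $e$, to a symmetrization argument exploiting that Poisson-commuting Hamiltonians generate commuting flows (so the concatenation $F \# G$ and the sum $F+G$ become interchangeable after homogenization in $k$).

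The principal obstacle is the \emph{partial} quasi-state property, namely $\zeta(H) = 0$ whenever $\supp H$ lies in a displaceable open set $U$. The required mechanism is the energy--capacity inequality for spectral invariants: if $\phi_G$ displaces $U$, then $|c(e,H)| \leq \|G\|$ for every such $H$, where $\|\cdot\|$ is the Hofer norm. This bound rests on a conjugation argument in Floer homology exploiting the fact that no nonconstant $1$-periodic orbit of $H$ survives after composition with a displacing Hamiltonian flow, forcing the relevant filtered Floer complex to decouple above the critical energy threshold. Once this estimate is in hand, homogenization in $k$ sends the right-hand side to $0$, delivering the required vanishing; the argument of the first paragraph then closes the proof.
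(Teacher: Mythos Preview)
Your argument is correct and is essentially the original Entov--Polterovich proof from \cite{entov2006quasi}, which the paper cites but does not reproduce. The paper instead derives Theorem~\ref{thm:EP_nondisplaceable_fiber} by a genuinely different route: it constructs the quantum cohomology IVQM $\tau$ on $M$ via Varolgunes's relative symplectic cohomology (Theorem~\ref{thm:QH_IVQM_is_IVQM}), pushes it forward by the involutive map to obtain an honest ideal-valued \emph{measure} on the base (Proposition~\ref{prop:involutive_maps_push_IVQM_to_IVM}), and then applies Gromov's abstract centerpoint theorem (Theorem~\ref{thm:central_point} via Theorem~\ref{thm:Gromov_central_point}) to find a fiber $\pi^{-1}(b_0)$ with $\tau(M\setminus\pi^{-1}(b_0)) \neq QH^*(M)$; the vanishing axiom of $\tau$ then forces non-displaceability (Corollary~\ref{cor:quant_nondisplaceable_fiber_QH_IVQM}).

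Your quasi-state approach is more self-contained and closer to classical spectral-invariant technology. The paper's IVQM route buys more structure: it yields the quantitative bound $\dim QH^*(M)/\tau(M\setminus \pi^{-1}(b_0)) \geq \rk_{d+1}QH^*(M)$, places the non-displaceable fiber theorem in the same framework as Gromov's maximal fiber inequality and the topological centerpoint theorem, and supplies the algebraic intersection criterion for SH-heavy sets (Proposition~\ref{prop:alg_crit_SH_heavy_nondispl}) that distinguishes, for instance, the two meridian situations in Remark~\ref{rem:meridians_in_torus}---something the bare quasi-state $\zeta$ does not see.
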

\noindent Non-displaceable sets in a symplectic manifold are ``big,'' so this can be interpreted as a ``big fiber'' theorem in symplectic topology.

\paragraph*{Concepts used in the proofs of the above theorems.} Both Gromov's maximal fiber inequality and Karasev's topological centerpoint theorem can be proved using Gromov's notion of \emph{ideal-valued measures} coming from \emph{\v Cech and singular cohomology}. Ideal-valued measures are the subject of Definition \ref{def:IVM} (Section \ref{ss:IVMs} below), and they are used to prove the aforementioned results in Section \ref{s:abstr_centerpt_thm_IVMs}. By contrast, Entov--Polterovich's result is proved using \emph{partial symplectic quasi-states} defined via \emph{Floer homology}, which are seemingly unrelated concepts.

Our idea is to unify the two approaches, using a generalization of ideal-valued measures to what we call \emph{symplectic ideal-valued quasi-measures} (Definition \ref{def:IVQM}), defined on symplectic manifolds. Our main result, Theorem \ref{thm:QH_IVQM_is_IVQM}, states that any closed symplectic manifold carries such an object, which also crucially satisfies some additional properties, and which arises from U.\ Varolg\"une\c s's \emph{relative symplectic cohomology} \cite{varolgunes2018mayer}. Armed with these, we
\begin{itemize}
  \item Refine the non-displaceable fiber theorem, see Theorems \ref{thm:quantitative_nondisplaceable_fiber}, \ref{cor:quant_nondisplaceable_fiber_QH_IVQM};
  \item Prove a symplectic version of the centerpoint theorem, see Theorem \ref{thm:symplectic_Tverberg}, and use it to produce a new example of rigid symplectic intersections, Theorem \ref{thm:torus_cross};
  \item Define \emph{SH-heavy subsets} of a symplectic manifold, which are a variant of Entov--Polterovich's notion of heavy subsets \cite{entov2009rigid}, and provide a simple algebraic criterion which guarantees that two SH-heavy sets intersect, see Definition \ref{def:SH_heavy} and Proposition \ref{prop:alg_crit_SH_heavy_nondispl}. In Section \ref{ss:SH_heavy_sets} we address a question about the connection between SH-heavy and heavy sets and prove that under certain assumptions heavy sets are necessarily SH-heavy.
\end{itemize}
\begin{rem}\label{rem:meridians_in_torus}
  In \cite{entov2009rigid} it is shown that a heavy set is non-displaceable, but beyond that it was unclear when two heavy sets should intersect. For instance, if $L,L' \subset \T^2$ are meridians, then both of them are heavy, but they may or may not be displaceable from one another. The two situations are when $L,L'$ lie in distinct homology classes versus when they represent the same homology class and have zero geometric intersection number. Using our notion of SH-heavy sets and the intersection criterion, we are able to distinguish between the two situations, see Example \ref{ex:transverse_Lag_tori_nondispl}. \end{rem}

\subsection{Gromov's ideal-valued measures: a review}\label{ss:IVMs}

In this section we review Gromov's notion of ideal-valued measures, and introduce the main example, the so-called cohomology ideal-valued measure. This will be used in Section \ref{s:abstr_centerpt_thm_IVMs} to prove Theorems \ref{thm:Gromov_torus} and \ref{thm:top_Tverberg}.

An algebra $(A,*)$ over a field is called \tb{$\Z_{2k}$-graded} if it decomposes as a direct sum $A = \bigoplus_{i\in\Z_{2k}} A^i$ of graded components, where $k$ is a nonnegative integer, such that $A^i*A^j \subset A^{i+j}$ for all $i,j$. We say that $A$ is \tb{skew-commutative} if for homogeneous $a,b \in A$ we have $ab = (-1)^{|a||b|}ba$, where $|\cdot|$ denotes the degree. In the rest of the paper by an \emph{algebra} we mean a graded skew-commutative associative unital algebra. Note that if $k=0$, we obtain a $\Z$-graded algebra. A typical example is the cohomology ring of a space.

For future use note that if $A$ is a $\Z$-graded algebra, then we can produce a $\Z_{2k}$-graded algebra $A^{*\bmod 2k}$, called its \tb{mod $2k$ regrading}, as follows:
$$(A^{*\bmod 2k})^{[i]} = \bigoplus_{j\equiv i \bmod 2k}A^j\quad\text{for }[i] \in \Z_{2k}\,.$$

An ideal $I\subset A$ is \tb{graded} if it decomposes as the direct sum of its graded components, that is $I = \bigoplus_i (I \cap A^i)$. Equivalently, $I$ is the kernel of a graded algebra morphism $A \to B$, that is an algebra morphism mapping $A^i\to B^i$ for all $i$. Note that in skew-commutative algebras left, right, and two-sided ideals are equivalent notions. We let $\cI(A)$ be the collection of graded ideals of $A$. We say that $A$ is \tb{graded Noetherian} if every ascending sequence of graded ideals stabilizes. This is the case, for instance, if $A$ is Noetherian, and in particular if it is finite-dimensional.

\begin{defin} (Gromov, \cite[Section 4.1]{gromov2010singularities})\label{def:IVM}
  Let $X$ be a topological space and let $A$ be an algebra. An \tb{$A$-ideal-valued measure (an $A$-IVM)} is an assignment $U \mapsto \mu(U) \in \cI(A)$, where $U\subset X$ runs over open sets, such that the following properties hold:
  \begin{enumerate}
    \item \tb{(normalization):} $\mu(\varnothing) = 0$ and $\mu(X) = A$;
    \item \tb{(monotonicity):} $\mu(U) \subset \mu(U')$ if $U \subset U'$;
    \item \tb{(continuity):} if $U_1\subset U_2\subset\dots$ and $U = \bigcup_i U_i$, then $\mu(U) = \bigcup_i\mu(U_i)$;
    \item \tb{(additivity):} $\mu(U\cup U') = \mu(U) + \mu(U')$ for disjoint $U,U'$;
    \item \tb{(multiplicativity):} $\mu(U)*\mu(U') \subset \mu(U \cap U')$;
    \item \tb{(intersection):} if $U,U'$ cover $X$, then $\mu(U\cap U') = \mu(U) \cap \mu(U')$.
  \end{enumerate}
\end{defin}

\begin{rem}
  The condition $\mu(X) = A$ is called fullness in \cite{gromov2010singularities}, however we opted to include it as part of the normalization condition because we will only use IVMs which satisfy it. Also, in Gromov's definition additivity and intersection are stated for countably many sets.
\end{rem}

\begin{rem}\label{rem:intersection_axiom_multiple_sets}
  Note that the intersection property only holds for covers of $X$ by \emph{two} subsets. The generalization to multiple sets is as follows: if $U_1,\dots, U_k\subset X$ cover $X$ pairwise, that is for each $i\neq j$, $U_i\cup U_j=X$, then $\bigcap_i\mu(U_i)=\mu\big(\bigcap_i U_i\big)$.
\end{rem}

\begin{rem}\label{rem:regularization_IVM}
  Given an $A$-IVM $\mu$ on a compact Hausdorff space we will use its natural extension to compact sets defined by
  $$\mu(K) = \bigcap_{U\text{ open}\supset K}\mu(U)\,.$$
  This extended function then satisfies the analogs of the monotonicity, multiplicativity, and intersection properties. Note that if $X$ is in addition a $G_\delta$-space\footnote{A set in a topological space is $G_\delta$ if it is a countable intersection of open sets. A space is $G_\delta$ if each closed set is $G_\delta$. In our case, since $X$ is compact and Hausdorff, being $G_\delta$ implies, by passing to complements, that each open set is a countable union of compacts, whence the conclusion.}, then the values of $\mu$ on open sets are recoverable from those on compact sets via $\mu(U) = \bigcup_{K\text{ compact}\subset U}\mu(K)$.
\end{rem}

\begin{exam}\label{ex:trivial_IVM}
  Given any algebra $A$ and a compact connected Hausdorff space $X$, the \tb{trivial $A$-IVM} on $X$ is given by $\mu(U) = 0$ for all $U \subsetneq X$ and $\mu(X) = A$.
\end{exam}

\begin{exam}\label{ex:coh_IVM}
  Here we describe two fundamental examples of IVMs: the \emph{\v Cech cohomology IVM} and the \emph{singular cohomology IVM}.
  \begin{itemize}
    \item Let $\check H^*$ denote \v Cech cohomology with coefficients in a fixed field. Letting $X$ be a compact Hausdorff space and $A = \check H^*(X)$ and putting
    $$\mu(U) = \ker \big(\check H^*(X) \to \check H^*(X\setminus U)\big)$$
    for open $U\subset X$ yields an IVM, called the \tb{\v Cech cohomology IVM} (Gromov, \cite[Section 4.1]{gromov2010singularities}). Note that $A$ is $\Z$-graded.
    \item In this example $H^*$ stands for singular cohomology with fixed field coefficients. We will describe the \tb{singular cohomology IVM} on a compact Hausdorff space $X$. Let $A = H^*(X)$ and note that $A$ is likewise $\Z$-graded. The idea is to take the construction of the previous example and \emph{regularize} it: for compact $K \subset X$ we put
$$\mu(K) = \bigcap_{U\text{ open}\supset K}\ker\big(H^*(X) \to H^*(X \setminus  U)\big)\,,$$
and for open $U \subset X$ we let
$$\mu(U) = \bigcup_{K\text{ compact}\subset U}\mu(K)\,.$$
  \end{itemize}
\end{exam}

\begin{rem}
  \begin{itemize}
    \item Regularization refers to the standard approximation of compact sets by open sets and of open sets by compacts. It is needed for the continuity property of the singular cohomology IVM.
    \item The reason we first define the singular cohomology IVM for compact sets is to make it similar to our definition of ideal-valued quasi-measures based on Varolg\"une\c s's relative symplectic cohomology, where we must use restriction maps to compact sets, see Definition \ref{def:QH_IVQM}.
    \item If $X$ is a closed manifold, then the singular cohomology and the \v Cech cohomology IVMs coincide, because singular cohomology and \v Cech cohomology coincide on codimension zero compact submanifolds of $X$ with boundary, and any compact set $K$ can be approximated by such submanifolds containing $K$ in their interior. Henceforth we will refer to either IVM on $X$ as the \tb{cohomology IVM}.
  \end{itemize}
\end{rem}

IVMs provide a conceptual framework in which to prove Karasev's topological centerpoint theorem \ref{thm:top_Tverberg}.
Similarly, Gromov's Theorem \ref{thm:Gromov_torus} can be proved using a general result about IVMs, Theorem \ref{thm:Gromov_central_point}, also due to Gromov \cite[Section 4.2]{gromov2010singularities}.

\subsection{Symplectic ideal-valued quasi-measures}\label{ss:sympl_IVQMs}

Here we present a new notion, \emph{symplectic ideal-va\-lu\-ed quasi-measures}. These are a suitable generalization of IVMs in the setting of symplectic manifolds, and are central to the results of the present paper. Throughout this section, $(M,\omega)$ is a fixed closed symplectic manifold.

\begin{defin}\label{def:commuting_sets}
  We say that two compact subsets $K,L \subset M$ \tb{commute} if there are Poisson-commuting $f,g \in C^\infty(M,[0,1])$ such that $K = f^{-1}(0), L = g^{-1}(0)$. Two open sets commute if their complements commute.
\end{defin}

\begin{rem}\label{rem:preimages_involutive_maps}
  It is not hard to see that two closed (respectively, open) sets commute if and only if they are the preimages of two closed (respectively, open) subsets by an involutive map $M\to B$.
\end{rem}

\begin{exam}\label{ex:disjoint_sets_commute}We leave it as a nice exercise for the reader to show that any two disjoint open sets commute, as do any two disjoint compact sets.
\end{exam}

\begin{defin}\label{def:IVQM}Fix an algebra $A$.
  A \tb{symplectic $A$-ideal-valued quasi-measure (symplectic $A$-IVQM)} on $(M,\omega)$ is an assignment $U \mapsto \tau(U) \in \cI(A)$, where $U$ ranges over open subsets of $M$, such that $\tau$ satisfies all of the properties of an $A$-IVM, with the exception of the multiplicativity axiom, which is replaced by the weaker
  \begin{itemize}
    \item \tb{(quasi-multiplicativity):} $\tau(U) * \tau(U') \subset \tau(U \cap U')$ whenever $U,U'$ commute.
  \end{itemize}
\end{defin}

\begin{rem}\label{rem:remarks_after_def_IVQM}
  \begin{itemize}
    \item Any $A$-IVM on $M$ is also a symplectic $A$-IVQM.
    \item For brevity, from this point on we will usually refer to symplectic IVQMs simply as IVQMs.
    \item Analogously to IVMs, we extend an $A$-IVQM $\tau$ to compact sets by
         $$\tau(K) = \bigcap_{U\text{ open}\supset K}\tau(U)\,.$$
         This extension satisfies the analogs of the monotonicity, quasi-mul\-ti\-pli\-cativity, and intersection properties.
    \item In the concrete examples of IVQMs we will construct the quasi-multiplicativity actually holds for more general pairs of subsets, see Sections \ref{ss:axioms_rel_SH_pairs}, \ref{sss:accel_data_descent}, but the stated property is easier to formulate, and it suffices for applications.
  \end{itemize}

\end{rem}

In what follows, $\Symp_0(M,\omega)$ stands for the identity component of the group of symplectomorphisms of $(M,\omega)$.
\begin{defin}For an $A$-IVQM $\tau$ on $(M,\omega)$ we define two further properties:
  \begin{itemize}
    \item \tb{(invariance):} if $\phi\in \Symp_0(M,\omega)$, then $\tau(\phi(U)) = \tau(U)$ for all $U$.
    \item \tb{(vanishing):} if $K\subset M$ is a displaceable compact set, then $\tau(M \setminus K) = A$, and there is an open $U$ such that $K\subset U$ and $\tau(U) = 0$.
  \end{itemize}
\end{defin}

\begin{rem}
  \begin{itemize}
    \item If $\tau$ is an IVQM which satisfies the vanishing property, then its extension to compacts, as in Remark \ref{rem:remarks_after_def_IVQM}, satisfies $\tau(K) = 0$ if $K$ is a displaceable compact set.
    \item Let now $\tau$ be an arbitrary $A$-IVQM, where $A$ is finite-dimensional.\footnote{Or, more generally, graded Artinian, that is each descending chain of graded ideals stabilizes.} If $K\subset M$ is compact and $\tau(K) = 0$, then there is an open $U\supset K$ with $\tau(U)=0$. In particular for such algebras $A$ the second half of the vanishing property is equivalent to requiring $\tau(K)=0$ for all displaceable compact $K$.
  \end{itemize}
\end{rem}

\begin{rem}\label{rem:IVMs_are_IVQMs}
 The trivial $A$-IVM (see Example \ref{ex:trivial_IVM} above)  satisfies the invariance property. If $A \neq 0$ and $M$ has positive dimension, then the trivial $A$-IVM does not satisfy the vanishing property.
\end{rem}

In order to formulate our main result, we need to recall the notion of quantum cohomology of $M$, as well as the relative symplectic cohomology for compacts in $M$, recently introduced by U.\ Varolg\"une\c s \cite{varolgunes2018mayer}, \cite{Varolgunes_2018_PhD}.

Fix a base field $\F$, and recall that the corresponding \tb{Novikov field}\footnote{Sometimes referred to as ``the universal Novikov field.''} is
$$\Lambda = \left\{\sum_{i=0}^{\infty}c_iT^{\lambda_i}\,\Big|\, c_i \in \F\,,\R\ni\lambda_i \xrightarrow[i\to\infty]{} \infty\right\}\,.$$
The \tb{quantum cohomology of $M$ with coefficients in $\Lambda$} is additively the singular cohomology
$$QH^*(M) = H^{*\operatorname{mod} 2N_M}(M;\Lambda)$$
regraded modulo $2N_M$, where $N_M$ is the minimal Chern number of $M$. The product operation on $QH^*(M)$ is given by the quantum product \cite{mcduff2012j}. More specifically, we use its $T$-weighted version, given for $a,b \in H^*(M;\Lambda)$ of pure degree by
$$a*b=\sum_{\alpha\in H_2(M;\Z)}\text{PD}\big(\text{GW}_3^\alpha(a,b,\cdot)\big)\,T^{\langle [\omega],\alpha\rangle}\,,$$
where $\text{GW}_3^\alpha\,(\cdot,\cdot,\cdot)$ is the genus zero $3$-point Gromov--Witten invariant in class $\alpha$, $\text{PD}$ is the Poincar\'e duality, which transforms functionals on cohomology into cohomology classes, and $[\omega]$ is the de Rham cohomology class of $\omega$.

In \cite{varolgunes2018mayer} Varolg\"une\c s defined, for each compact $K\subset M$, its relative symplectic cohomology $SH^*(K;\Lambda)$. This invariant has the following properties, among others:
\begin{itemize}
  \item Each $SH^*(K;\Lambda)$ is a $\Z_{2N_M}$-graded unital associative skew-commutative algebra over $\Lambda$, and $SH^*(M;\Lambda) = QH^*(M)$ as an algebra \cite{tonkonog2020super};
  \item For $K\subset K'$ there is a \tb{restriction map} $\res^{K'}_K \fc SH^*(K';\Lambda) \to SH^*(K;\Lambda)$, which is a graded unital algebra morphism, such that $\res^K_K = \id$, and if $K \subset K'\subset K''$, then $\res^{K'}_K\circ \res^{K''}_{K'} = \res^{K''}_K$;
  \item The \tb{Mayer--Vietoris property}: if $K,K'$ commute, the restriction maps fit into a long exact sequence
  \begin{multline*}
  \dots \to SH^*(K\cup K';\Lambda) \xrightarrow{\big(\res^{K\cup K'}_K,\,\res^{K\cup K'}_{K'}\big)} SH^*(K;\Lambda) \oplus SH^*(K';\Lambda)\\ \xrightarrow{\res^K_{K\cap K'} - \res^{K'}_{K\cap K'}} SH^*(K\cap K';\Lambda) \xrightarrow{+1}\dots
  \end{multline*}

  \item If $K$ is displaceable, then $SH^*(K;\Lambda) = 0$.
\end{itemize}
\noindent See \cite{varolgunes2018mayer}, and Sections \ref{ss:quantum_coh_IVQM}, \ref{sss:accel_data_descent} (Definition \ref{def:rel_SH_subset}) for a precise definition of $SH^*(K;\Lambda)$.
\begin{rem}
  Note that $SH^*$ has a $\Z_{2N_M}$-grading, rather than merely a $\Z_2$-grading. The reason for this is that in our version of $SH^*$ we only use $1$-periodic Hamiltonian orbits which are contractible in $M$. See also Remark \ref{rem:only_contractible_orbits_SH}.
\end{rem}

We define a set function $\tau$ with values in the graded ideals of $QH^*(M)$, as follows:
$$\tau(K) = \bigcap_{U\text{ open}\supset K}\ker \big(\res^M_{M\setminus U} \fc QH^*(M)=SH^*(M;\Lambda) \to SH^*(M\setminus U;\Lambda)\big)$$
for compact $K \subset M$ and
$$\tau(U) = \bigcup_{K\text{ compact}\subset U} \tau(K)$$
for open $U \subset M$.

We can now formulate our main result.
\begin{thm}[{\textbf{Main theorem}}]\label{thm:QH_IVQM_is_IVQM}
  The function $\tau$ is a symplectic $QH^*(M)$-IVQM on $M$ satisfying the invariance and vanishing properties.
\end{thm}

\begin{defin}\label{def:QH_IVQM}
  The $QH^*(M)$-IVQM $\tau$ is called the \tb{quantum cohomology IVQM}.
\end{defin}

\noindent By far the most nontrivial property which needs to be proved for $\tau$ is quasi-multiplica\-ti\-vity. To this end we develop the tool of a relative symplectic cohomology of pairs, and the whole of Section \ref{s:proofs_properties_SH_pairs} is dedicated to the proof of its existence.

\begin{rem}
  Note that if $K$ is compact, then $\tau(K) = \bigcap_{U\text{ open}\supset K}\tau(U)$. In other words, had we used the values of $\tau$ on open sets and extended it to compact sets as in Remark \ref{rem:regularization_IVM}, we would have recovered the values of $\tau$ on compacts as defined above.
\end{rem}

\begin{rem}
  One of the key features of IVQMs is as follows, see Proposition \ref{prop:involutive_maps_push_IVQM_to_IVM}:
  \begin{center}
    \emph{The pushforward of an IVQM under an involutive map is an IVM}.
  \end{center}
  \noindent Pushforwards are the subject of Definition \ref{def:pushforward}. This proposition allows us to take the quantum cohomology IVQM on a given symplectic manifold, push it forward by an involutive map, obtain an IVM, and then apply to it the corresponding results relating to IVMs. See Section \ref{s:abstr_centerpt_thm_IVMs} for details.
\end{rem}

\noindent The proof of the main theorem uses Floer theory. However, on $S^2$, we can describe an IVQM in elementary terms, as follows:
\begin{prop}\label{prop:unique_A_IVQM_sphere}
  Fix a nonzero algebra $A$, and equip $S^2$ with an area form of total area $1$. Then there exists a unique $A$-IVQM $\tau$ on $S^2$, with the following property: if $D\subset S^2$ is an open disk with smooth boundary, then $\tau(D) = 0$ provided that $D$ has area $\leq \frac 1 2$, and $\tau(D) = A$ otherwise. It satisfies the invariance and vanishing properties.
\end{prop}
\begin{proof}[Sketch of proof.]
  First, we will show that the intersection, additivity, and continuity properties determine $\tau$ uniquely, given its values on disks with smooth boundary. If $U$ is a connected open set with smooth boundary, let $S^2\setminus U = D_1\cup\dots\cup D_k$ be the decomposition into connected components, which are closed disks with smooth boundary. Put $U_i=S^2\setminus D_i$. Then the sets $U_1,\dots,U_k$ are open disks with smooth boundary, and they form an open pairwise covering of $S^2$, that is for each $i\neq j$, $U_i\cup U_j=S^2$. It follows from the intersection axiom that $\tau(U) = \tau(U_1)\cap\dots\cap\tau(U_k)$, and since the values of $\tau(U_i)$ are given, this recovers the value $\tau(U)$. The additivity axiom recovers the values of $\tau$ on any open subset which has smooth boundary and finitely many connected components.

  Finally, if $U\subset S^2$ is any open set, it can be represented as an increasing union $U=\bigcup_{i\in\N}U_i$ of open sets with smooth boundary and finitely many components, and thus $\tau(U)$ is recovered from the continuity property: $\tau(U) = \bigcup_i \tau(U_i)$.

  To prove that this object is indeed an $A$-IVQM requires technical work using tools of $2$-dimensional geometry and topology. We emphasize that $\tau$ is only an $A$-IVQM and not an $A$-IVM. Indeed, if $K,L\subset S^2$ are two transversely intersecting equators, then $\tau(K)=\tau(L)=A$ but $\tau(K\cap L) = 0\not\supset A=\tau(K)*\tau(L)$, which shows that the multiplicativity property fails.
\end{proof}

\subsection{Quantitative non-displaceable fiber theorem}\label{ss:quantit_nd_fiber_thm}

In this section we formulate and prove a refinement of Theorem \ref{thm:EP_nondisplaceable_fiber}.
\begin{defin}\label{def:continuous_involutive_map}
  Let $(M,\omega)$ be a symplectic manifold and let $Y$ be a topological space. We call a continuous map $f \fc M \to Y$ \tb{involutive} if there is a (smooth) involutive map $\pi \fc M \to B$ and a continuous map $\ol f \fc B \to Y$ such that $f = \ol f\circ \pi$.
\end{defin}
\noindent Note that smooth involutive maps are involutive in this generalized sense.

Next, we need to define multiplicative ranks of algebras, see \cite{gromov2010singularities}.
\begin{defin}\label{def:ranks_algebras}
  Let $(A,*)$ be a finite-dimensional algebra. For $r\geq 1$ put
  $$A^{/r} = \bigcap\{I \in \cI(A)\,|\, \dim A/I < r\}\,.$$
  For $d \geq 1$, \tb{the $d$-rank of $A$}, denoted $\rk_dA$, is defined as the maximal $r$ for which $(A^{/r})^{*d} \neq 0$.
\end{defin}
\begin{exam}\label{ex:rank_at_least_1}
  Note that $A^{/1} = A$. If $A \neq 0$, this implies that $\rk_dA \geq 1$ for all $d$. Also note that $A^{/(\dim A+1)} = 0$, therefore $\rk_dA \leq \dim A$ for all $d$.
\end{exam}

\begin{thm}\label{thm:quantitative_nondisplaceable_fiber}Let $A$ be a finite-dimensional algebra, and let $(M,\omega)$ be a closed symplectic manifold equipped with an $A$-IVQM $\tau$. If $Y$ is a metric space of covering dimension $d$, then for any involutive map $f \fc M \to Y$ there is $y_0 \in Y$ such that
$$\dim A/\tau \big(M \setminus  f^{-1}(y_0)\big) \geq \rk_{d+1}A\,.$$
If furthermore $A\neq0$ and $\tau$ satisfies vanishing, then $f^{-1}(y_0)$ is non-displaceable.
\end{thm}

\noindent Theorem \ref{thm:quantitative_nondisplaceable_fiber} is proved in Section \ref{s:abstr_centerpt_thm_IVMs}. As a particular case of this result, we obtain the following generalization of the Non-displaceable fiber theorem \ref{thm:EP_nondisplaceable_fiber}.
\begin{coroll}[\tb{Quantitative non-displaceable fiber theorem}]\label{cor:quant_nondisplaceable_fiber_QH_IVQM}
  Let $(M,\omega)$ be a closed symplectic manifold, $Y$ a metric space of covering dimension $d$, and let $f \fc M \to Y$ be a continuous involutive map. If $\tau$ is the quantum cohomology IVQM on $M$, then there is $y_0 \in Y$ such that
  $$\dim_\Lambda QH^*(M) / \tau\big(M \setminus  f^{-1}(y_0)\big) \geq \rk_{d+1}QH^*(M)\,.$$
  In particular $f^{-1}(y_0)$ is non-displaceable. \qed
\end{coroll}

\subsection{Rigid fibers of involutive maps and symplectic rigidity}\label{ss:cent_fibers_invol_maps_sympl_rigidity}
Let us formulate the existence of rigid fibers in the context of involutive maps.
\begin{thm}\label{thm:symplectic_Tverberg}
Let $(M,\omega)$ be a closed symplectic manifold equipped with an $A$-IVQM $\tau$, where $A$ is an algebra. Let $I \in \cI(A)$ be a graded ideal such that $I^{*(d+1)} \neq 0$ for some $d \geq 1$. If $Y$ is a metric space of covering dimension $d$, then any continuous involutive map $f \fc M \to Y$ has a fiber which intersects each closet subset $Z\subset M$ such that $I\subset \tau(Z)$.
\end{thm}
\noindent This is proved in Section \ref{s:abstr_centerpt_thm_IVMs}, as a consequence of the corresponding topological result, Corollary \ref{cor:big_fiber_cont_maps_into_dim_d_metric}.

We will now formulate a new example of rigid symplectic intersections, whose proof, which appears in Section \ref{ss:pf_thm_torus_cross}, is based on a concrete example of Theorem \ref{thm:symplectic_Tverberg}. Consider the standard symplectic $6$-torus $\T^6$ with coordinates $p_i,q_i$, $i=1,2,3$ and symplectic form $\omega = dp\wedge dq$. For $a,b,c \in \T^2$ consider the following coisotropic subtori:
\begin{align*}
  T_1(a) &= \{(p,q) \in \T^6\,|\,(q_1,q_2) = a\}\,,\\
  T_2(b) &= \{(p,q) \in \T^6\,|\,(p_1,p_3) = b\}\,,\\
  T_3(c) &= \{(p,q) \in \T^6\,|\,(p_2,q_3) = c\}\,.
\end{align*}
Let
$$T(a,b,c) = T_1(a) \cup T_2(b) \cup T_3(c)\,.$$
In the next theorem an \tb{equator} is any smoothly embedded circle in $S^2$ dividing it into two disks of equal area.
\begin{thm}\label{thm:torus_cross}
  Let $B$ be a surface. Then any involutive map $\T^6\times S^2 \to B$ has a fiber which intersects every set of the form
  $$T(a,b,c) \times \text{equator}\,.$$
\end{thm}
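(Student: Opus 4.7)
The plan is to apply Theorem~\ref{thm:symplectic_Tverberg} to $M=\T^6\times S^2$ (with the product symplectic form) endowed with its quantum cohomology IVQM $\tau$, and to $Y=B$, which has covering dimension $d=2$. I must produce an ideal $I\subset QH^*(M)$ with $I^{*3}\neq 0$ such that $T(a,b,c)\times E\in\cX_{I,\tau}$ for every $(a,b,c)\in(\T^2)^3$ and every equator $E$. Since $\T^6$ is symplectically aspherical, K\"unneth gives $QH^*(M)=H^*(\T^6;\Lambda)\otimes\Lambda[h]/(h^2-T)$, where $h\in H^2(S^2;\Lambda)$ is the area class of the unit-area form. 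Let $\alpha_1=[dq_1\wedge dq_2]$, $\alpha_2=[dp_1\wedge dp_3]$, $\alpha_3=[dp_2\wedge dq_3]$ be the Poincar\'e duals in $H^2(\T^6;\Lambda)$ of the tori $T_1(a),T_2(b),T_3(c)$, and take $I$ to be the graded ideal generated by $\alpha_1h,\alpha_2h,\alpha_3h$. A direct computation gives $(\alpha_1h)(\alpha_2h)(\alpha_3h)=\pm\alpha_1\alpha_2\alpha_3\otimes h^3=\pm T\cdot[\T^6]\otimes h\neq 0$, so $I^{*3}\neq 0$.

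The heart of the argument is to verify $I\subset\tau(T(a,b,c)\times E)$. By monotonicity and the fact that $T_i\times E\subset T(a,b,c)\times E$, it suffices to prove $\alpha_ih\in\tau(T_i\times E)$ for each $i\in\{1,2,3\}$. The decisive observation is that the enlarged compact sets
\[
\widetilde K_i^{(1)}:=T_i\times S^2\quad\text{and}\quad \widetilde K^{(2)}:=\T^6\times E
\]
commute in the sense of Definition~\ref{def:commuting_sets}: each is the zero set of a non-negative smooth function on $M$ depending only on one of the two factors, and two such functions automatically Poisson-commute for the product symplectic form. Their intersection equals $T_i\times E$, so quasi-multiplicativity of $\tau$ on commuting compacts yields
\[
\tau(\widetilde K_i^{(1)})\cdot\tau(\widetilde K^{(2)})\subset\tau(T_i\times E).
\]
Since $(\alpha_i\otimes 1)(1\otimes h)=\alpha_ih$ in $QH^*(M)$, the problem reduces to the two factorized claims (A) $\alpha_i\otimes 1\in\tau(\widetilde K_i^{(1)})$ and (B) $1\otimes h\in\tau(\widetilde K^{(2)})$.

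For (B): every open neighborhood of $E$ in $S^2$ contains an annular region not contained in any closed disk of area ${<}\tfrac12$, so by Example~\ref{ex:QH_IVQM_sphere} its $\tau_{S^2}$-value is the entire algebra $QH^*(S^2)$, whence $\tau_{S^2}(E)=QH^*(S^2)\ni h$. For (A): since $\T^6$ is aspherical, the restriction $QH^*(\T^6)\to SH^*(\T^6\setminus U;\Lambda)$ is controlled by the ordinary cohomology restriction, on which the Poincar\'e dual $\alpha_i$ of the submanifold $T_i\subset U$ vanishes, giving $\alpha_i\in\tau_{\T^6}(T_i)$. To upgrade these single-factor facts to (A) and (B), I use the tube lemma---every open neighborhood of $\widetilde K_i^{(1)}$ contains one of the form $U_1\times S^2$ with $U_1\supset T_i$, and symmetrically for $\widetilde K^{(2)}$---together with a K\"unneth-type compatibility for the restriction map of relative symplectic cohomology on stabilized compacts, namely $SH^*(K\times M';M\times M')\cong SH^*(K;M)\otimes QH^*(M')$, with the restriction from $QH^*(M\times M')$ acting as $\res\otimes\mathrm{id}$. \textbf{This stabilization K\"unneth is the main technical obstacle}, which I expect to follow from a direct analysis of product Hamiltonians of the form $H(x,y)=G(x)+\varepsilon F(y)$ appearing in the definition of $SH^*$. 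Granted it, the restriction of $\alpha_i\otimes 1$ to $SH^*((\T^6\setminus U_1)\times S^2)$ is $\res^{\T^6}_{\T^6\setminus U_1}(\alpha_i)\otimes 1=0$, proving (A); (B) is symmetric. Theorem~\ref{thm:symplectic_Tverberg} then produces a point $y_0\in B$ whose fiber $f^{-1}(y_0)$ meets every $T(a,b,c)\times E$.
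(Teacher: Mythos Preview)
Your approach is essentially the paper's: same ideal $I$ generated by $\alpha_i\otimes h$, same verification that $I^{*3}\neq 0$, same single-factor inputs (Theorem~\ref{thm:IVQM_annuli_in_tori} for $\alpha_i\in\tau_{\T^6}(T_i)$ and Example~\ref{ex:QH_IVQM_sphere} for $h\in\tau_{S^2}(E)$), and the same appeal to Theorem~\ref{thm:symplectic_Tverberg}. The only variation is the intermediate use of quasi-multiplicativity on the commuting compacts $T_i\times S^2$ and $\T^6\times E$; the paper instead applies Lemma~\ref{lem:IVQM_products} directly to $T(a,b,c)$ and $E$.

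The K\"unneth step you flag as the main obstacle is exactly Lemma~\ref{lem:IVQM_products}, and you need less than you claim. You do not need the module isomorphism $SH^*(K\times M';\Lambda)\cong SH^*(K;\Lambda)\otimes QH^*(M')$; for (A) and (B) it suffices that the K\"unneth morphism sends $\tau_M(K)\otimes\tau_N(L)$ into $\tau_{M\times N}(K\times L)$ when $N\setminus L$ is a finite disjoint union of displaceable sets. This covers (A) with $L=S^2$ (complement empty) and (B) with $L=E$ (two displaceable open hemispheres), as well as the paper's direct route. The paper's proof of this lemma avoids a general K\"unneth isomorphism: it uses naturality of the K\"unneth morphism together with Mayer--Vietoris for the commuting pair $U^c\times N$, $M\times V^c$, plus vanishing of $SH^*$ on displaceable compacts, to show that $\res^{(U\times V)^c}_{U^c\times N}$ is an isomorphism and thereby force the restriction of $\psi(\alpha\otimes\beta)$ to vanish. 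Your suggested analysis via product Hamiltonians would likely work too, but the Mayer--Vietoris argument is shorter and uses only properties already established.
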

\begin{rem}
  Let us comment on the sharpness of the various assumptions in this theorem:
  \begin{itemize}
    \item The dimension of $B$ cannot be increased. Indeed, consider the involutive map $f\fc\T^6 \times S^2 \to \T^3$, $(p,q;z) \mapsto (q_1,p_2,p_3)$. Let $t=(q_1',p_2',p_3') \in \T^3$. If $(a,b,c) \in \T^6$ is such that $a_1\neq q_1'$, $b_2\neq p_3'$, $c_1\neq p_2'$, then the fiber $f^{-1}(t)$ is disjoint from $T(a,b,c) \times S^2$.
    \item The involutivity assumption is essential. Consider the non-involutive projection $\pi\fc \T^6 \times S^2 \to S^2$, and let $w \in S^2$. Let $L\subset S^2$ be an equator such that $w \notin L$. Then any $T(a,b,c)\times L$ is disjoint from $\pi^{-1}(w) = \T^6\times\{w\}$.
    \item The union of just two coisotropic tori does not work: consider the involutive map $f\fc \T^6 \times S^2 \to \T^2$, $(p,q;z) \mapsto (q_1,p_3)$, and let $t = (q_1',p_3') \in \T^2$. For any $a,b\in\T^2$ such that $a_1\neq q_1'$ and $b_2\neq p_3'$ the fiber $f^{-1}(t)$ is disjoint from $(T_1(a) \cup T_2(b)) \times S^2$.
  \end{itemize}
\end{rem}

\subsection{SH-heavy sets}\label{ss:SH_heavy_sets}

In \cite{entov2009rigid} Entov--Polterovich defined a special class of compact subsets of a closed symplectic manifold, the so-called \emph{heavy sets} (see Section \ref{ss:heavy_implies_SH_heavy} for a reminder). They proved that a heavy subset is non-displaceable, however it remained unclear in general when two heavy sets should intersect. Here we address this problem to a degree.

Throughout this subsection, $\tau$ stands for the quantum cohomology IVQM on the appropriate symplectic manifold. First, let us introduce the suitable class of subsets.
\begin{defin}\label{def:SH_heavy}
  Let $(M,\omega)$ be a closed symplectic manifold. We call a compact set $K \subset M$ \tb{SH-heavy} if $\tau(K) \neq 0$.
\end{defin}
\begin{rem}
     A different hierarchy of rigid subsets of symplectic manifolds based on Varolg\"une\c s's relative symplectic cohomology was introduced in \cite{tonkonog2020super}, \cite{Borman2021quantum}. It would be interesting to explore its  relation to SH-heaviness.\footnote{\tb{Added in revision:} See \cite{maksunvarolgunes2023characterizationheaviness} for advances in this direction.} Here we will only point out the fact that for a compact set $K\subset M$ in a closed symplectic manifold $(M,\omega)$ we have $\tau(K) = QH^*(M)$ if and only if $K$ is SH-full in the terminology of \cite{tonkonog2020super}, that is $SH^*(K') = 0$ for each compact $K'\subset M$ disjoint from $K$. Indeed, if $K$ is SH-full and $U\supset K$ is open, then $SH^*(M\setminus U;\Lambda) = 0$, therefore $\tau(K) = \bigcap_{U\text{open}\supset K}\ker\big(SH^*(M;\Lambda) \to SH^*(M\setminus U;\Lambda)\big) = SH^*(M;\Lambda) = QH^*(M)$. Conversely, if $\tau(K) = QH^*(M)$ and $K'$ is compact and disjoint from $K$, then from the definition of $\tau$ it follows that $\ker \big(SH^*(M;\Lambda) \to SH^*(K';\Lambda)\big)= SH^*(M;\Lambda)$, or equivalently that the unit of $QH^*(M)$ is killed by the restriction $\res^M_{K'}$. Since restriction maps are unital (\cite{tonkonog2020super}), it follows that the unit of the algebra $SH^*(K';\Lambda)$ vanishes, therefore $SH^*(K';\Lambda) = 0$.
\end{rem}

\noindent The following is an immediate consequence of the vanishing property of $\tau$:
\begin{prop}\label{prop:SH_heavy_nondispl}
  SH-heavy sets are non-displaceable. \qed
\end{prop}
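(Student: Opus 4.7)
The plan is to argue by contrapositive: if $K$ is displaceable, then $\tau(K) = 0$, which means $K$ is not SH-heavy.

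Suppose $K \subset M$ is a displaceable compact set. The vanishing property of the IVQM $\tau$ (stated in the definition and recalled in the bullet points after Definition \ref{def:IVQM}) guarantees the existence of an open neighborhood $U \supset K$ with $\tau(U) = 0$. Now invoke the extension of $\tau$ to compact sets:
\[
\tau(K) = \bigcap_{V\text{ open}\supset K}\tau(V) \subset \tau(U) = 0\,,
\]
so $\tau(K) = 0$. Hence $K$ is not SH-heavy, which is exactly the desired contrapositive. Equivalently, one may simply appeal to the reformulation noted in the remark after Definition \ref{def:IVQM}: since $QH^*(M)$ is finite-dimensional, the vanishing property says outright that $\tau$ vanishes on displaceable compact sets.

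There is no real obstacle here; the proposition is essentially a restatement of the vanishing axiom combined with the definition of SH-heaviness. The only ingredient one might worry about is the compatibility of the extension of $\tau$ to compact sets with monotonicity, but this is recorded in the bullet list following Definition \ref{def:IVQM}.
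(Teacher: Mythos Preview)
Your proof is correct and is exactly the argument the paper has in mind: the proposition is stated as an immediate consequence of the vanishing property of $\tau$, and your contrapositive spells this out. There is nothing to add.
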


Next we formulate an algebraic criterion for nondisplaceability.
\begin{prop}\label{prop:alg_crit_SH_heavy_nondispl}
  Let $K,K'\subset M$ be compact sets in a closed symplectic manifold $(M,\omega)$. If $\tau(K)*\tau(K')\neq 0$, then $K,K'$ are SH-heavy and cannot be displaced from one another by a symplectic isotopy.
\end{prop}
\begin{proof}
  The assumption clearly implies that $\tau(K),\tau(K') \neq 0$, whence the first assertion. If $\phi \in \Symp_0(M,\omega)$ displaces $K'$ from $K$, then by the invariance and multiplicativity properties we have:
  $$\tau(K)*\tau(K') = \tau(K)*\tau(\phi(K')) \subset \tau(K \cap \phi(K')) = \tau(\varnothing) = 0\,,$$
  contradicting the assumption.
\end{proof}

The following theorem provides examples of SH-heavy subsets in standard symplectic tori. In its formulation we identify
$$QH^*(\T^{2n}) = H^*(\T^{2n};\Lambda) = H^*(\T^n;\Lambda)\otimes H^*(\T^n;\Lambda)$$
using the K\"unneth formula, while for a space $X$, $\mu_X$ denotes the cohomology IVM on $X$ (see Example \ref{ex:coh_IVM}).

\begin{thm}\label{thm:IVQM_annuli_in_tori}
  Let $M=\T^{2n} = \T^n(p)\times \T^n(q)$ be endowed with the symplectic form $\omega=dp\wedge dq$ and let $S\subset \T^n$ be a closed subset. Then
  \begin{multline*}
    \tau(S\times \T^n) = \mu_M(S\times \T^n) = \mu_{\T^n}(S)\otimes H^*(\T^n;\Lambda)\subset H^*(\T^n;\Lambda)\otimes H^*(\T^n;\Lambda) = H^*(M;\Lambda)\,.
  \end{multline*}
  In particular $S\times \T^n$ is SH-heavy if $S\neq \varnothing$. The same results hold if $S$ is open.
\end{thm}

\noindent This theorem is proved in Section \ref{ss:SH_special_domains} as a consequence of Theorem \ref{thm:domain_no_contr_orbits_bdry_SH_equal_H}, see Section \ref{ss:add_computations}, where we also present additional computations of symplectic cohomology.

We will now present a nontrivial instance of the use of Proposition \ref{prop:alg_crit_SH_heavy_nondispl}, based on Theorem \ref{thm:IVQM_annuli_in_tori}. For this we will need on the following K\"unneth-type lemma, proved in Section \ref{ss:pf_thm_torus_cross}, where $\tau$ stands for the quantum cohomology IVQM on $M,N,M\times N$:
\begin{lemma}\label{lem:IVQM_products}
	Let $M,N$ be closed symplectic manifolds and let $K\subset M$, $L\subset N$ be compact sets. If $N \setminus  L$ decomposes into a finite number of pairwise disjoint displaceable subsets, then $\tau(L) = SH^*(N;\Lambda)$, and moreover the K\"unneth isomorphism
	\[
		\psi\colon SH^*(M;\Lambda)\otimes SH^*(N;\Lambda) \to SH^*(M\times N;\Lambda)
	\]
	maps $\tau(K)\otimes \tau(L) = \tau(K) \otimes SH^*(N;\Lambda)$ into $\tau(K\times L)$.
\end{lemma}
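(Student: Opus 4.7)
The plan is to prove three ingredients and combine them via the quasi-multiplicativity axiom of $\tau$ on $M\times N$.

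First, I will establish $\tau(L)=SH^*(N;\Lambda)$, that is, $SH^*(N\setminus V;\Lambda)=0$ for every open $V\supset L$. Assuming (as one may) that the sets $D_i$ decomposing $N\setminus L$ are open, each $E_i:=(N\setminus V)\cap D_i$ is clopen in the compact set $N\setminus V$, hence compact; moreover the $E_i$ have pairwise disjoint closures and each is displaceable. Disjointness of closures lets one construct, for any pair $E_i,E_j$, Hamiltonians $f_i,f_j\colon N\to[0,1]$ that are constant outside small pairwise disjoint neighborhoods and satisfy $f_i^{-1}(0)=E_i$; their Poisson bracket then vanishes identically, so the $E_i$ pairwise commute in the sense of Definition~\ref{def:commuting_sets}. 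Iterating Mayer--Vietoris together with the vanishing $SH^*(E_i;\Lambda)=0$ for displaceable $E_i$ gives $SH^*\!\bigl(\bigcup_i E_i;\Lambda\bigr)=0$, and hence by monotonicity $SH^*(N\setminus V;\Lambda)=0$. Applied to the pair $(M\times N,\,M\times L)$, where $M\times(N\setminus L)=\bigsqcup_i M\times D_i$ with each $M\times D_i$ displaceable in $M\times N$ via $\id\times\phi_i$, this also yields $\tau(M\times L)=SH^*(M\times N;\Lambda)$.

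Second, I will show $\psi(\tau(K)\otimes 1)\subset\tau(K\times N)$. Fix an open $W\supset K\times N$; since $N$ is compact, the tube lemma produces an open $U\supset K$ with $U\times N\subset W$, reducing the task to showing $\res^{M\times N}_{(M\setminus U)\times N}\bigl(\psi(x\otimes 1)\bigr)=0$ for $x\in\tau(K)$. By naturality of the Künneth isomorphism with respect to restriction in the first argument, this restriction is identified with a map applied to $\res^M_{M\setminus U}(x)\otimes 1$, which vanishes because $\res^M_{M\setminus U}(x)=0$ by definition of $\tau(K)$. This naturality is the main technical point: it requires a chain-level construction of $\psi$ via product Hamiltonians $H_i\oplus H_i'$ adapted to $(M,K)$ and to $N$, and the verification that the Floer continuation maps realizing $\res^{M\times N}_{(M\setminus U)\times N}$ factor compatibly as $\res^M_{M\setminus U}$ on the $M$-side tensored with the identity on the $N$-side.

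To assemble these, note that the compact subsets $K\times N$ and $M\times L$ of $M\times N$ commute, since Hamiltonians $f(m,n)=g(m)$ and $f'(m,n)=g'(n)$ with $g^{-1}(0)=K$ and $g'^{-1}(0)=L$ Poisson-commute automatically on the product symplectic manifold. Their intersection is $K\times L$, so quasi-multiplicativity of $\tau$ gives $\tau(K\times N)\cdot\tau(M\times L)\subset\tau(K\times L)$. Since $\psi$ is an algebra morphism, for any $x\in\tau(K)$ and $y\in SH^*(N;\Lambda)$,
$$\psi(x\otimes y)=\psi(x\otimes 1)\cdot\psi(1\otimes y)\in\tau(K\times N)\cdot\tau(M\times L)\subset\tau(K\times L),$$
using the second ingredient for the first factor and the first ingredient (in the form $\tau(M\times L)=SH^*(M\times N;\Lambda)$) for the second factor. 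This completes the proof.
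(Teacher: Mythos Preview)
Your proof is correct and takes a genuinely different route for the second assertion. The first step (that $\tau(L)=SH^*(N;\Lambda)$) is essentially identical to the paper's argument. For the main assertion, the paper proceeds more directly: it shows $\res^{M\times N}_{(U\times V)^c}(\psi(\alpha\otimes\beta))=0$ by proving that the further restriction $\res^{(U\times V)^c}_{U^c\times N}$ is an \emph{isomorphism}. This follows from Mayer--Vietoris applied to the commuting pair $U^c\times N$, $M\times V^c$ (whose union is $(U\times V)^c$ and whose intersection is $U^c\times V^c$), together with the vanishing of $SH^*(M\times V^c;\Lambda)$ and $SH^*(U^c\times V^c;\Lambda)$, both of which are finite disjoint unions of displaceable compacts. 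The K\"unneth naturality then reduces to $\res^M_{U^c}(\alpha)=0$.

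Your alternative factors $\psi(x\otimes y)=\psi(x\otimes1)\cdot\psi(1\otimes y)$, places the first factor in $\tau(K\times N)$ via the tube lemma and K\"unneth naturality, places the second in $\tau(M\times L)=SH^*(M\times N;\Lambda)$ by reapplying the first step to $M\times L$, and concludes via quasi-multiplicativity of $\tau$ for the commuting compact pair $K\times N$, $M\times L$. This is more axiomatic and arguably cleaner, but it invokes the quasi-multiplicativity of $\tau$, which in the paper rests on the product for relative symplectic cohomology of pairs---heavier machinery than the bare Mayer--Vietoris the paper's proof uses. Both approaches rely on the same K\"unneth-naturality input, which is indeed the main technical point.
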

\noindent We refer the reader to \cite{Varolgunes_2018_PhD} for the definition of the K\"unneth morphism for compact subsets of $M,N$. In general it is neither injective nor surjective, however when the sets in question are $M$ and $N$ themselves, it can be shown to be an isomorphism.

\begin{exam}\label{ex:transverse_Lag_tori_nondispl}
  Let $L = \{\pt\} \times \T^n(q),\, L'=\T^n(p)\times\{\pt\} \subset \T^{2n}$ be linear Lagrangian tori. Since $\ker\big(H^*(\T^n;\Lambda)\to H^*(\T^n\setminus\pt;\Lambda)\big)$ is spanned by the volume class, we have, according to Theorem \ref{thm:IVQM_annuli_in_tori}:
  $$\tau(L) =\Lambda\cdot [dp_1\wedge\dots\wedge dp_n]\otimes H^*(\T^n;\Lambda) = H^*(\T^{2n};\Lambda)\cdot\langle [dp_1\wedge\dots\wedge dp_n]\rangle\,,$$
  $$\text{and similarly}\quad\tau(L') = H^*(\T^{2n};\Lambda)\cdot\langle [dq_1\wedge\dots\wedge dq_n]\rangle\,.$$
  In particular we see that $\tau(L)*\tau(L')$ is spanned by $[\omega^n]$, therefore nonzero. By Proposition \ref{prop:alg_crit_SH_heavy_nondispl}, $L,L'$ cannot be displaced from one another by a symplectic isotopy. Of course, since the homological intersection number of $L,L'$ is nonzero, they in fact cannot be displaced from one another even by a smooth isotopy. To obtain a nontrivial example, we take the product with an equator $E \subset S^2$, namely let us identify $SH^*(\T^{2n};\Lambda)\otimes SH^*(S^2;\Lambda)$ with $SH^*(\T^{2n}\times S^2;\Lambda)$ by means of the K\"unneth isomorphism $\psi$ from Lemma \ref{lem:IVQM_products}, which then implies
  $$\tau(L\times E) \supset \tau(L)\otimes QH^*(S^2)\,,\qquad \tau(L'\times E) \supset \tau(L')\otimes QH^*(S^2)\,,$$
  whence
  \begin{multline*}
    \tau(L\times E)*\tau(L'\times E) \supset (\tau(L)\otimes QH^*(S^2))*(\tau(L')\otimes QH^*(S^2))\\=
      (\tau(L)*\tau(L'))\otimes QH^*(S^2) = H^*(\T^{2n};\Lambda)\cdot \langle [\omega^n]\rangle\otimes QH^*(S^2) \neq 0\,,
  \end{multline*}
  which by Proposition \ref{prop:alg_crit_SH_heavy_nondispl} implies that $L\times E$ and $L'\times E$ cannot be displaced from one another by a symplectic isotopy. That these subsets cannot be displaced from one another by a \emph{Hamiltonian} isotopy was proved in \cite{kawasaki2018function} by different techniques. Note that the intersection number argument no longer applies, and indeed $L\times E,L'\times E$ can be displaced from one another by a smooth isotopy. Note as well that $L\times E,L'\times E$ are heavy, but the technology of \cite{entov2009rigid} cannot guarantee a rigid intersection for them.
\end{exam}

It would be interesting to understand the connection between heavy and SH-heavy sets. In fact, we propose the following
\begin{conj}
  A compact subset of a closed symplectic manifold is heavy if and only if it is SH-heavy.\footnote{\tb{Added in revision:} The ``if'' direction was proved in full generality in \cite{maksunvarolgunes2023characterizationheaviness}. See also \cite{sun2021index_bdd} for an earlier partial result in the same direction.}
\end{conj}
\noindent Here we would like to prove that under certain assumptions, heavy sets are SH-heavy. First, recall that a symplectic manifold $(M,\omega)$ is \tb{symplectically aspherical} if $\omega$ and $c_1(M)$ vanish on $\pi_2(M)$. Next, a cooriented hypersurface $\Sigma\subset M$ is of \tb{contact type} if there exists a vector field $Y$ defined on a neighborhood of $\Sigma$ satisfying $\cL_Y\omega = \omega$, and such that along $\Sigma$, $Y$ points everywhere in the positive direction. Note that in this case $\alpha:=(\iota_Y\omega)|_\Sigma$ is a contact form on $\Sigma$. Lastly, $\Sigma$ is \tb{incompressible} if the map $\pi_1(\Sigma) \to \pi_1(M)$, induced by the inclusion, is injective.

If $\Sigma$ is an incompressible cooriented hypersurface of contact type in a symplectically aspherical manifold $(M,\omega)$ and all the contractible\footnote{Note that thanks to the incompressibility of $\Sigma$, contractibility may equivalently mean either in $\Sigma$ or in $M$.} Reeb orbits of $\alpha$ on $\Sigma$ are nondegenerate, then we can unambiguously assign a Conley--Zehnder index to each such orbit $\gamma$, as follows: $\gamma$ admits a contracting disk $u$ in $\Sigma$, and if $\xi=\ker\alpha$ is the contact structure on $\Sigma$, then $u^*\xi$ is trivializable and a trivialization allows us to assign an index to $\gamma$. Since $c_1(\xi)=c_1(M)|_\Sigma$ and $c_1(M)|_{\pi_2(M)}=0$ by the symplectic asphericity, the index is independent of the choice of the contracting disk.
\begin{defin}[See \cite{tonkonog2020super}]\label{def:idx_bdd}
  Let $(M,\omega)$ be symplectically aspherical and let $\Sigma\subset M$ be an incompressible cooriented hypersurface of contact type. We say that $\Sigma$ is \tb{index-bounded} if there exists a vector field $Y$ near $\Sigma$ as above, such that all the contractible Reeb orbits of $(\iota_Y\omega)|_\Sigma$ are nondegenerate and such that for each $k \in \Z$ the set of periods of the contractible Reeb orbits of Conley--Zehnder index $k$ is bounded.
\end{defin}

\begin{convention}For the rest of the paper, by a \tb{region} in a closed manifold we mean a compact codimension zero submanifold with (possibly empty) boundary.
\end{convention}

If $W \subset M$ is a region, we say that $W$ has \tb{contact-type boundary} or that it is a \tb{contact-type region} if $\partial W$ is of contact type relative to the outward coorientation. We then have the following result, where $\tau$ is the quantum cohomology IVQM on $M$:
\begin{thm}\label{thm:heavy_implies_SH_heavy}
Let $(M,\omega)$ be closed and symplectically aspherical. Let $K\subset M$ be a compact set such that there is a sequence $W_i$ of contact-type regions with incompressible index-bounded boundary such that $K\subset \Int W_i$ for each $i$ and such that $K = \bigcap_i W_i$. If $K$ is heavy, then for each $i$ we have
$$[\Vol] \in \ker \res^M_{\ol{W_i^c}}\,,$$
which implies that $[\Vol] \in \tau(K) = \bigcap_i\ker \res^M_{\ol{W_i^c}}$, and in particular that $K$ is SH-heavy.
\end{thm}
\noindent Here $[\Vol] \in H^{2n}(M;\Lambda)$ is the volume class. Theorem \ref{thm:heavy_implies_SH_heavy} is proved in Section \ref{ss:heavy_implies_SH_heavy}. We have the following immediate consequence.
\begin{coroll}
  If $(M,\omega)$ is closed and symplectically aspherical and $K$ is a heavy contact-type region with incompressible index-bounded boundary, then $K$ is SH-heavy.
\end{coroll}
\begin{proof}
  Let $\Sigma = \partial K$. Then there is a neighborhood of $\Sigma$ which is diffeomorphic to $\Sigma\times (1-\epsilon,1+\epsilon)_r$, such that the vector field $r\partial_r$ points outwards along $\Sigma = \Sigma\times\{0\}$, and such that $\cL_{r\partial_r}\omega = \omega$. In particular $W_i = K\cup(\Sigma\times [0,\epsilon/{2i}])$ are as in the theorem and the assertion follows.
\end{proof}

\subsection{Quantum IVQM versus cohomology IVM}\label{ss:add_computations}
In this section $(M,\omega)$ stands for a closed symplectically aspherical symplectic manifold, and $\tau$ is the quantum cohomology IVQM on $M$. Theorem \ref{thm:IVQM_annuli_in_tori} is a consequence of the following result, proved in Section \ref{ss:pf_SH_no_contr_orbits}:
\begin{thm}\label{thm:domain_no_contr_orbits_bdry_SH_equal_H}
  Let $K\subset M$ be a region with $\Sigma = \partial K$, such that $\Sigma \hookrightarrow M$ extends to a smooth embedding $(-\epsilon,\epsilon)\times \Sigma \hookrightarrow M$ such that no $\{\rho\}\times\Sigma$ carries closed characteristics which are contractible in $M$, for $|\rho|<\epsilon$. Then $SH^*(K;\Lambda) = H^*(K;\Lambda)$ and the restriction map $SH^*(M;\Lambda) \to SH^*(K;\Lambda)$ coincides with $H^*(M;\Lambda) \to H^*(K;\Lambda)$.
\end{thm}
For the remainder of this subsection, $\mu$ is the cohomology IVM on $M$, see Example \ref{ex:coh_IVM} above.
\begin{coroll}\label{cor:no_contr_chars_bdry_IVQM_equals_IVM}
  Under the assumptions of Theorem \ref{thm:domain_no_contr_orbits_bdry_SH_equal_H}, we have $\tau(K) = \mu(K) = \ker \big(H^*(M;\Lambda)\to H^*(M\setminus K;\Lambda)\big)$.
\end{coroll}
\begin{proof}
  Let us identify $(-\epsilon,\epsilon)\times\Sigma$ with its image under the embedding appearing in the formulation of the theorem. The sets $Q_\delta:=M\setminus(K\cup([0,\delta)\times\Sigma))$ for $\delta \in (0,\frac{\epsilon}2)$ are cofinal in the collection of compact sets disjoint from $K$, whence
  $$\tau(K) = \bigcap_{\delta\in(0,\frac{\epsilon}{2})} \ker\big(QH^*(M)\to SH^*(Q_\delta;\Lambda)\big)\,.$$
  Since each $Q_\delta$ also satisfies the assumptions of the theorem, it follows that
  $$\ker\big(QH^*(M)\to SH^*(Q_\delta;\Lambda)\big) = \ker\big(H^*(M;\Lambda)\to H^*(Q_\delta;\Lambda)\big)\,,$$
  which equals $\ker\big(H^*(M;\Lambda)\to H^*(M\setminus K;\Lambda)\big)$ since each $Q_\delta$ is a deformation retract of $M\setminus K$. Thus
  $$\tau(K) = \ker\big(H^*(M;\Lambda)\to H^*(M\setminus K;\Lambda)\big)\,,$$
  which also equals $\mu(K)$ by the same arguments.
\end{proof}

We also include results regarding regions with contact-type incompressible index-bounded boundary. Given a contact-type region $K$, we let $\wh K$ be the completion of $K$, obtained by attaching to it the positive end of the symplectization of $\partial K$. In Section \ref{sss:SH_Liouv_domains} we will define $SH_{\cl}^*(\wh K;\Lambda)$, the classical symplectic cohomology of $\wh K$, as in \cite{viterbo1999functorsI}. In Section \ref{ss:pf_thm_SH_Liouv_domains} we will prove the following result:
\begin{thm}\label{thm:Liouville_dom_idx_bdd_SH} Let $K\subset M$ be a contact-type region with incompressible index-bounded boundary. Then:
  \begin{enumerate}
    \item $SH^*(K;\Lambda)$ is canonically isomorphic to $SH_{\cl}^*(\wh K;\Lambda)$;
    \item $\ker\big(H^*(M;\Lambda)\to H^*(K;\Lambda)\big) \subset \ker \big(\res^M_K \fc SH^*(M;\Lambda) \to SH^*(K;\Lambda)\big)$.
  \end{enumerate}
\end{thm}

\begin{rem}
  This result can be interpreted as a kind of excision property for relative symplectic cohomology under these assumptions. Another result in this direction appears in \cite{groman2023floertheory}. It would be interesting to understand the relation between the two.
\end{rem}

Based on this result, and using the arguments of Corollary \ref{cor:no_contr_chars_bdry_IVQM_equals_IVM}, we can prove:
\begin{coroll}
  Under the assumptions of Theorem \ref{thm:Liouville_dom_idx_bdd_SH}, we have
  $$\tau(M \setminus K)   \supset \mu(M \setminus K)\,. \qquad \qed$$
\end{coroll}

\begin{rem}
  Note that this corollary, as well as Corollary \ref{cor:no_contr_chars_bdry_IVQM_equals_IVM}, fail without the assumptions. Indeed, in the sphere $S^2$ let $K$ be a closed disk with smooth boundary and area $\geq\frac 12$. Then $S^2\setminus K$ is an open disk with smooth boundary of area $\leq \frac12$, and thus $\tau(S^2\setminus K) = 0$ (see Example \ref{ex:QH_IVQM_sphere}), whereas $\mu(S^2\setminus K)$ is spanned by the area class, and in particular is nonzero.
\end{rem}

For additional computations of relative symplectic cohomology under related assumptions see \cite{sun2021index_bdd}.

\subsection{Discussion}

\subsubsection{Persistence modules and symplectic cohomology}

Varolg\"une\c s's symplectic cohomology is constructed as a module over the Novikov ring $\Lambda_{\geq0}$. Here we only use the invariant obtained by tensoring with the Novikov field $\Lambda$, which forgets torsion. This operation is unnecessary for many constructions in this paper. It would be interesting to understand and apply the additional information carried by the torsion part.

Novikov modules, that is modules over $\Lambda_{\geq0}$, can carry an additional structure, namely a \emph{valuation}, which in turn gives rise to \emph{persistence modules} (see for instance \cite{polterovich2020topological} for preliminaries on persistence modules in the symplectic context). All the symplectic cohomology modules come with natural valuations. Some elementary examples show that the cokernel of the restriction map $SH^*(M) \to SH^*(M \setminus U)$ is a Novikov module which can have non-trivial torsion. The latter can be interpreted as a persistence module whose structure is encoded by combinatorial invariants, the \emph{barcodes} (see \emph{ibid.}). Thus, to every subset of $M$ there correspond a persistence module and its barcodes. Understanding the algebraic structure behind this correspondence should undoubtedly lead to new applications of symplectic cohomology.

\subsubsection{Rigidity of open covers and multi-intersections}\label{subsubsec-multi}

In \cite{entov2006quasi}, Entov and Polterovich proved the following theorem.
\begin{thm}[Entov-Polterovich]\label{thm: EP no commute par. of 1 of displ cover}
	Let $(M,\omega)$ be a closed symplectic manifold, let $\cU=\{U_1,\ldots,U_N\}$ be a finite open cover of $M$ and let $\{\varphi_1,\ldots,\varphi_N\}$ be a partition of unity subordinated to $\cU$. If for every  $1\leq i,j\leq N$ we have $\{\varphi_i,\varphi_j\}=0$ then there exists an element of $\cU$ which is non-displaceable.
\end{thm}

Using symplectic IVQMs, we can generalize this result. To this end, we need the following generalization of Poisson commutation to several sets.
\begin{defin}
Let $(M,\omega)$ be a symplectic manifold. Let $\{A_1,\dots,A_N\}$ be a collection of open (respectively, closed) subsets of $M$. We say that $\{A_1,\dots,A_N\}$ \textbf{Poisson commutes} if there is a smooth involutive map $\pi\fc M\to B$ and open (respectively, closed) subsets $B_1,\ldots,B_N\subset B$ such that $A_i=\pi^{-1}(B_i)$ for every $1\leq i\leq N$.
\end{defin}
\begin{rem}\label{rem:commutation_several_sets}
  \begin{itemize}
    \item Note that for two subsets, this definition is equivalent to Definition \ref{def:commuting_sets}.
    \item If a collection of sets $A_1,\dots,A_N$ Poisson commutes, where all the sets are simultaneously open or closed, then so does the closure of $\{A_i\}_i$ under unions and intersections.
  \end{itemize}
\end{rem}

Now we can formulate our generalization.

\begin{prop} \label{thm-ep-revisited}
	Let $(M,\omega)$ be a closed symplectic manifold, let $A$ be an algebra, and fix an $A$-IVQM $\sigma$ on $M$. If $K_1,\ldots,K_N \subset M$ is a Poisson commuting collection of compacts with $\bigcap_iK_i=\varnothing$, then
$$\prod_i \sigma(K_i) = 0\,.$$
\end{prop}
\begin{proof}Thanks to Remark \ref{rem:commutation_several_sets}, each $K_i$ commutes with $K_1\cap\dots \cap K_{i-1}$, therefore
$$\prod_i\sigma(K_i) \subset \sigma\big(\textstyle\bigcap_i K_i\big) = \sigma(\varnothing)=0$$
by induction and the quasi-multiplicativity property of $\sigma$.
\end{proof}

\begin{proof}[Proof of Theorem \ref{thm: EP no commute par. of 1 of displ cover}]The map $\Phi = (\varphi_1,\dots,\varphi_N) \fc M \to \R^N$
is involutive, since the $\varphi_i$ pairwise Poisson commute. The sets $K_i:=\varphi_i^{-1}(0)$ therefore form a commuting collection of compacts. Moreover $\bigcap_iK_i = \varnothing$ since $\{\varphi_1,\dots,\varphi_N\}$ is a partition of unity.

Let $\tau$ be the quantum cohomology IVQM on $M$. Thanks to Theorem \ref{thm-ep-revisited}, we have
$$\prod_i\tau(K_i) = 0\,,$$
which implies that there exists $i=1,\dots,N$ such that $\tau(K_i) \neq QH^*(M)$. Since the collection $(\{\varphi_i<\alpha\})_{\alpha>0}$ is cofinal in the family of all open sets containing $K_i$, we obtain
$$\tau(K_i) = \bigcap_{\alpha>0}\tau\big(\{\varphi_i<\alpha\}\big)\,,$$
which implies the existence of $\alpha > 0$ with $\tau\big(\{\varphi_i<\alpha\}\big) \neq QH^*(M)$. Thanks to the vanishing property of $\tau$, it follows that $\{\varphi_i\geq\alpha\}=M\setminus\{\varphi_i<\alpha\}$ is non-displaceable, and therefore so is the larger set $U_i$.
\end{proof}

\medskip\noindent

Note that Theorem \ref{thm-ep-revisited} admits the following equivalent formulation: \emph{Let $U_1,\dots,U_N$ be a Poisson commuting open cover of $M$. Then
	\begin{equation}
		\label{eq-prod-vsp}
		\prod_i \sigma(M\setminus U_i) = 0\,.
	\end{equation}}

\begin{exam}\label{exam-threesets} Let $M$ be the two-dimensional torus $\mathbb{R}^2/\mathbb{Z}^2$ equipped with the form $dp \wedge dq$ and let $\tau$ be the quantum cohomology IVQM on $M$. Consider annuli $Q = \{0 < q <a\}$ and $P= \{0 < p <b\}$ with $a, b \in (0,1)$. Consider the rectangle $M \setminus (P \cup Q)$, and denote by $R$ a slightly bigger open rectangle. By Theorem \ref{thm:IVQM_annuli_in_tori} we have $\tau(P^c) = A\cdot [dp]$ and $\tau(Q^c) = A\cdot [dq]$, where $A = QH^*(M)$. Additionally, \cite[Corollary 1.15]{tonkonog2020super} implies that $\tau(R^c) = A$. Indeed, \cite[Corollary 1.15]{tonkonog2020super} implies that for every closed disk $D$ in $M$ we have $SH(D;\Lambda)=0$, in particular $\ker \res^M_D=SH(M;\Lambda)=A$. Thus, since $R$ is an open disk in $M$, we see that
$$\tau(R^c)=\bigcap_{U\,\text{open} \supset R^c } \ker \res^M_{M\setminus U} = \bigcap_{D\,\text{closed disk} \subset R }\ker\res^M_D = A.$$
It follows that
$$\tau(P^c)\cdot\tau(Q^c)\cdot\tau(R^c) = A\cdot[dp\wedge dq] \neq 0\,,$$
that is \eqref{eq-prod-vsp} does not hold for the open cover $P,Q,R$, which means that it does not Poisson commute. Furthermore, by the invariance property of $\tau$ and Theorem \ref{thm-ep-revisited}, for every triple of symplectomorphisms $f,g,h \in \Symp_0(M)$ the images $f(P), g(Q), h(R)$ cannot form a Poisson commuting cover of $M$.
\end{exam}

\medskip
Given a closed symplectic manifold $M$, Proposition \ref{thm-ep-revisited} implies that if $K_1,\ldots,K_N$ are closed subsets with $\prod_i \tau(K_i)\neq0$, where $\tau$ is the quantum cohomology IVQM of the manifold, then there are no $\varphi_1,\ldots,\varphi_N \in \Symp_0(M,\omega)$ such that $\varphi(K_1),\ldots,\varphi(K_N)$ Poisson commute and $\bigcap_i \varphi_i(K_i)=\varnothing$.

Note as well that in \cite{varolgunes2018mayer} Varolg\"une\c s defined an invariant $SH^*(K_1,\dots,K_r)$ for arbitrary compact $K_1,\dots,K_r\subset M$, which vanishes if the $K_i$ commute. This invariant then measures, in a sense, the non-commutativity of the given sets. It is interesting to understand how this invariant is related to discussion in this section.

\subsubsection{Lagrangian IVQMs}\label{subsubsec-LagIVQM} Fix a closed Lagrangian submanifold $L \subset M$.
In \cite{tonkonog2020super} Tonkonog and Varolg\"une\c s presented an extension of Varolg\"une\c s's relative symplectic cohomology to the Lagrangian setting, where the Hamiltonian Floer cohomology $HF^*(H)$ of a Hamiltonian $H$ is replaced by the Lagrangian Floer cohomology $HF^*(L,H)$.
It is natural to expect that, arguing along the lines of the present paper, one can arrive at an IVQM on $M$ based on the Lagrangian quantum ring $QH^*(L)$ (see the paper \cite{biran2007quantum} by Biran and Cornea for a detailed introduction to this ring for
monotone Lagrangian submanifolds). Note that this ring is graded, but in general not skew-commutative, so one has to work out how to extend the notion of an IVQM to non-commutative algebras.

``Big fiber" theorems for such a $QH^*(L)$-IVQM should yield further symplectic intersection results. For instance, an analogue of Theorem \ref{thm:quantitative_nondisplaceable_fiber} above should yield the following result due to Varolg\"une\c s: \emph{Every involutive map $M\to B$ admits a fiber which cannot be displaced from $L$ by a Hamiltonian isotopy} (see \cite[Theorem 1.4.1]{varolgunes2018mayer} and \cite[Corollary 1.11]{kawasaki2018function}).

\medskip
\noindent
{\bf Organization of the paper:} In Section \ref{ss:quantum_coh_IVQM} we briefly recall the definition of Varolg\"une\c s's symplectic cohomology for compact subsets of symplectic manifolds. In Section \ref{ss:axioms_rel_SH_pairs} we formulate a list of axioms for a relative symplectic cohomology for pairs of compact subsets. We then state Theorem \ref{thm:existence_relative_SH_pairs}, which is an existence result for such an object. We then use it to prove our Main Theorem \ref{thm:QH_IVQM_is_IVQM} in Section \ref{s:proof_main_thm}, thus establishing the existence of the quantum cohomology IVQM. Section \ref{s:proofs_properties_SH_pairs}, which is the technical heart of the paper, is dedicated to the proof of the existence of a relative symplectic cohomology of pairs, Theorem \ref{thm:existence_relative_SH_pairs}, using a combination of Floer theory with tools of homotopical algebra. Section \ref{s:examples_computations} contains the proofs of the results formulated in Section \ref{ss:quantit_nd_fiber_thm} (quantitative non-displaceable fiber theorem), Section \ref{ss:cent_fibers_invol_maps_sympl_rigidity} (a new example of symplectic rigidity), Section \ref{ss:SH_heavy_sets} (SH-heavy sets), and in Section \ref{ss:add_computations} (additional computations of SH). Last, but not least, Section \ref{s:abstr_centerpt_thm_IVMs} is dedicated to a streamlined exposition of the use of IVMs in the proofs of big fiber Theorems \ref{thm:Gromov_torus}, \ref{thm:top_Tverberg}, as well as the relation between IVMs, IVQMs, and involutive maps, and how these imply the quantitative non-displaceable fiber Theorem \ref{thm:quantitative_nondisplaceable_fiber}, the existence of rigid fibers of involutive maps, Theorem \ref{thm:symplectic_Tverberg}, and our new symplectic intersection result, Theorem \ref{thm:torus_cross}.

\section{Relative symplectic cohomology of pairs}\label{s:relative_SH_pairs}

In this section we briefly review a definition of Varolg\"une\c s's relative symplectic cohomology \cite{Varolgunes_2018_PhD}, \cite{varolgunes2018mayer}. The proof of our Main Theorem \ref{thm:QH_IVQM_is_IVQM}, which appears in Section \ref{s:proof_main_thm}, is based on a new invariant---a relative symplectic cohomology of pairs---which we introduce in Section \ref{ss:axioms_rel_SH_pairs} using an axiomatic approach. Throughout this section $(M,\omega)$ is a fixed closed symplectic manifold.

Let us begin by detailing our \tb{sign conventions} regarding Floer theory.
\begin{itemize}
                                \item An almost complex structure $J$ on $M$ is compatible with $\omega$ if $\omega(\cdot,J\cdot)$ is a Riemannian metric.
                                \item The symplectic action of a loop $x$ capped by a disk $u$ relative to a Hamiltonian $H$ is $\cA_H(x,u) = \int_0^1H_t(x(t))\,dt - \int u^*\omega$.
                                \item The Floer equation we use corresponds to the positive gradient flow of the action functional: $\partial_su-J(u)(\partial_tu-X_H(u))=0$. Similarly for continuations maps, homotopies, and so on.
                              \end{itemize}
\begin{rem}
Our sign conventions differ from those of \cite{varolgunes2018mayer}. \emph{The two are related by time reversal.} In detail, if $H$ is a time-dependent Hamiltonian, $x$ is a loop in $M$, $u \fc \R\times S^1 \to M$ is a smooth map, and $(J_t)_t$ is a loop of almost complex structures on $M$, we define the corresponding time-reversed objects $\ol H$, $\ol x$, $\ol u$, and $\ol J$ by $\ol H_t = H_{-t}$, $\ol x(t) = x(-t)$, $\ol u(s,t) = u(s,-t)$, and $\ol J_t= J_{-t}$. Then $x\mapsto \ol x$ is a bijection between the $1$-periodic orbits of $H$ according to the conventions of $\cite{varolgunes2018mayer}$ and those of $\ol H$ according to our conventions. The induced map on Floer complexes is an isomorphism, provided we use $J$ in Varolg\"une\c s's Floer equation and $\ol J$ in ours, since $u$ solves the former if and only if $\ol u$ solves the latter. Note in particular that this means that appropriately defined (cohomological) spectral invariants for autonomous Hamiltonians are identical in the two conventions, since time reversal has no effect.
\end{rem}

\subsection{Relative symplectic cohomology}\label{ss:quantum_coh_IVQM}

We will now briefly review the definition of the relative symplectic cohomology. The \tb{Novikov ring} is the subring
$$\Lambda_{\geq0} = \left\{\sum_{i=0}^{\infty}c_iT^{\lambda_i} \in \Lambda\,\Big|\, \forall i\,:\lambda_i\geq 0\right\}$$
of the Novikov field $\Lambda$. Given a compact $K \subset M$ and a commutative $\Lambda_{\geq0}$-algebra $R$, the \tb{relative symplectic cohomology of $K$ in $M$ with coefficients in $R$} is
\begin{equation}\label{eqn:def_SH_Lambda_coeffs}
  SH^*(K;R) = H^*\Big(\textstyle\wh{\varinjlim_{i}}\, CF^*(H_i) \otimes_{\Lambda_{\geq0}}R\Big)\,,
\end{equation}
where $H_i$ is a pointwise increasing sequence of non-degenerate time-periodic Hamiltonians on $M$ such that $H_i|_K<0$, and such that
$$\lim_{i\to\infty}H_i(x) = \left\{\begin{array}{ll}0\,, & x \in K \\ \infty\,, & x \notin K\end{array}\right.\,;$$
$CF^*(H_i)$ stands for the Floer complex
\begin{equation}\label{eqn:defin_Floer_cx}
  CF^*(H_i) = \bigoplus_{x \in \cP^\circ(H_i)}\Lambda_{\geq 0}\cdot x\,,
\end{equation}
where $\cP^\circ(H_i)$ is the set of $1$-periodic orbits of $H_i$, which are contractible in $M$. The complexes $CF^*(H_i)$ are connected by Floer continuation maps. Finally, the hat stands for the completion of $\Lambda_{\geq 0}$-modules (see Section \ref{ss:completion}).

\begin{rem}
  \begin{itemize}
    \item We will abbreviate $SH^*(K;\Lambda_{\geq0}$) to $SH^*(K)$ throughout.
    \item Varolg\"une\c s uses the notation $SH_M^*$ to explicitly point out the symplectic manifold. We drop the subscript $_M$, trusting that the context will resolve the ambiguity.
    \item The cohomology $SH^*(K;R)$ is independent of the specific choice of Hamiltonians $H_i$ and other auxiliary data. To prove this, and in general to be able to work with $SH^*$, it is advantageous to use an alternative definition based on homotopy theory, see Section \ref{s:proofs_properties_SH_pairs} and \cite{varolgunes2018mayer} for more details.
    \item \emph{Throughout the paper we use the properties of $SH^*$ listed in Section \ref{ss:sympl_IVQMs}}.
  \end{itemize}
\end{rem}

\begin{rem}\label{rem:only_contractible_orbits_SH}
  Unlike the original definition by Varolg\"une\c s, we only consider contractible periodic orbits. The reason for this is that we are only interested in kernels of restriction maps defined on $SH^*(M;\Lambda)$, which on the homology level is generated by contractible orbits, and chain level restriction maps  preserve the free homotopy class of periodic orbits and thus preserve the contractible component. Also, since we only consider contractible orbits, the grading can be taken mod $2N_M$ rather than merely mod $2$.
\end{rem}

\begin{exam}\label{ex:QH_IVQM_sphere}
  Let $M = S^2$ with an area form $\omega$ which is normalized to have area $1$. The quantum cohomology algebra of $M$ is $QH^*(M) = \Lambda\langle 1, h \rangle$, where $1$ is the unit class while $h = [\omega]$ is the area class. The grading is modulo $2N_{S^2} = 4$, $|1| = 0$, $|h|=2$. The multiplication is completely determined by $h^2 = T\cdot 1$.

  If $D\subset M$ is a smooth closed disk of area $<\frac{1}{2}$, then $D$ is displaceable, and thus $SH^*(D;\Lambda) = 0$. If $D$ has area $>\frac{1}{2}$, then $SH^*(D;\Lambda) = SH^*(M;\Lambda) = QH^*(S^2)$, and moreover $\res^M_D$ is the identity, which can be inferred from the Mayer--Vietoris property of $SH^*$.\footnote{The case $\area(D)=\frac 1 2$ requires an additional calculation. See for instance \cite{Varolgunes_2018_PhD}.} It follows that the quantum cohomology IVQM $\tau$ on $S^2$ satisfies
  $$\tau(D) = \left\{\begin{array}{ll}0\,,&\text{if }\area(D)<\frac 1 2\,,\\ QH^*(S^2)\,,&\text{if }\area(D) \geq \frac 1 2\,,\end{array}\right.$$
  for a closed disk $D\subset S^2$. It follows that $\tau$ is precisely the unique $QH^*(S^2)$-IVQM described in Proposition \ref{prop:unique_A_IVQM_sphere}. From the definition of $\tau$ it also easily follows that if $U\subset S^2$ is an open disk, then $\tau(U)=0$ if $\area(U)\leq\frac12$ and $\tau(U)=QH^*(S^2)$ otherwise.
\end{exam}

\subsection{Axioms for relative symplectic cohomology of pairs}\label{ss:axioms_rel_SH_pairs}

Here we formulate the axioms for a relative symplectic cohomology of pairs of compact subsets of $M$. It is an extension of Varolg\"une\c s's relative symplectic cohomology. The axioms are reminiscent of those of Eilenberg--Steenrod for cohomology, see the paper \cite{cieliebak2018symplectic} by Cieliebak and Oancea. This is not the most extensive list, but it is reasonably complete, and it contains all the properties we need to prove our results. Section \ref{s:proof_main_thm} contains the proof of our Main Theorem \ref{thm:QH_IVQM_is_IVQM} based on the axioms.

As we will see in Section \ref{s:proof_main_thm}, in order to prove our main theorem, we only use these axioms, regardless of the details of a particular choice of a relative symplectic cohomology of pairs. However, since we do perform a concrete construction in order to prove the existence of such an object, and to somewhat demystify the axioms, we will comment here on our construction. It is instructive to recall that the singular cohomology $H^*(X,A)$ of a pair $(X,A)$ is the cohomology of the complex $C^*(X,A)$, which by definition is the kernel of the restriction map $C^*(X)\to C^*(A)$, where $C^*(\cdot)$ is the singular cochain functor. In our story for each compact $K\subset M$ and a $\Lambda_{\geq0}$-algebra $R$ there is a complex $SC^*(K;R)$ computing $SH^*(K;R)$. The difference between this and the singular complex is that $SC^*(K;R)$ depends on various choices, such as Hamiltonians, almost complex structures, and so on, and is really well-defined only up to homotopy equivalence. This forces us to use the \emph{homotopy kernel of the chain level restriction map} $\res^K_{K'} \fc SC^*(K;R) \to SC^*(K';R)$ for $K'\subset K$, as a complex underlying the symplectic cohomology $SH^*(K,K';R)$ of the pair $(K,K')$. We use a very convenient model for homotopy kernels, given by the \emph{cocone construction}. See Section \ref{s:proofs_properties_SH_pairs}, in particular Definition \ref{def:rel_complex_pair}, for details. Also note that the cocone construction is standard when defining relative cohomology invariants on the chain level, see for instance \cite[p.78]{bott1982differential}.

Let $\cC$ denote the category of compact subsets of $M$, where the morphisms are inclusions, and let $\cC\cP$ denote the category of compact pairs of subsets of $M$, that is the objects are pairs $(K,K')$ of compact subsets of $M$ such that $K' \subset K$, and there is exactly one morphism $(K,K') \to (L,L')$ if $K\subset L$ and $K' \subset L'$.

Recall that if $P,Q$ are graded modules with graded components $P^i,Q^i$, respectively, then a module map $f \fc P \to Q$ is \tb{graded of degree $d$} if $f(P^i)\subset Q^{i+d}$ for all $i$.

Fix a commutative $\Lambda_{\geq0}$-algebra $R$; in applications $R$ is either $\Lambda_{\geq0}$ or $\Lambda$. Let $\cM$ be the category of $\Z_{2N_M}$-graded $R$-modules and degree zero module maps, and let $\cA_{\text{nu}}\subset \cM$ be the subcategory consisting of associative skew-commutative non-unital $R$-algebras and degree zero algebra morphisms. We let $[1]$ be the shift functor on $\cM$. Varolg\"une\c s's symplectic cohomology is a contravariant functor $SH^*(\cdot;R)\fc \cC \to \cA_{\text{nu}}$. We refer the reader to the discussion in Section \ref{ss:rel_SH_discussion} regarding the product structures on $SH^*(\cdot;R)$.

For the definition we need the so-called \emph{descent} property for pairs of compact sets. It is defined in \cite{varolgunes2018mayer}, and we recall the definition in Section \ref{sss:accel_data_descent}. As Varolg\"une\c s proves in \cite{varolgunes2018mayer}, if two compact sets commute, they are in particular in descent; therefore descent can be interpreted as a kind of ``algebraic commutation'' condition.

We define a \tb{relative symplectic cohomology of pairs with coefficients in $R$} as any contravariant functor $SH^*(\cdot,\cdot;R) \fc \cC\cP \to \cA_{\text{nu}}$ satisfying the properties appearing below. Note that the functor property means that for each compact pair $(K,K')$ we have the corresponding graded algebra $SH^*(K,K';R)$, and that for each inclusion of pairs $(K,K')\subset (L,L')$ there is a restriction map
$$\res^{(L,L')}_{(K,K')} \fc SH^*(L,L';R) \to SH^*(K,K';R)\,,$$
which is a degree zero algebra morphism, such that if $(P,P')$ is another pair containing $(L,L')$, then
$$\res^{(L,L')}_{(K,K')}\circ \res^{(P,P')}_{(L,L')} = \res^{(P,P')}_{(K,K')}\,,$$
and such that $\res^{(K,K')}_{(K,K')} = \id_{SH^*(K,K';R)}$. The properties are as follows:
\begin{itemize}
  \item \tb{(Normalization):} Let $\iota\fc\cC \to \cC\cP$ be the inclusion functor $\iota \fc K \mapsto (K,\varnothing)$. There is a natural isomorphism between the composition $SH^*(\cdot,\cdot;R)\circ\iota$ and $SH^*(\cdot;R)$, meaning that for each $K \in \cC$ there is an isomorphism $SH^*(K,\varnothing;R) = SH^*(K;R)$ in $\cA_{\text{nu}}$ commuting with restrictions.

  \begin{notation}
    For $K',K''\subset K$ we denote by $\res^{(K,K')}_{K''}$ the composition of $\res^{(K,K')}_{(K'',\varnothing)}$ and the isomorphism $SH^*(K'',\varnothing;R)=SH^*(K'';R)$.
  \end{notation}

  \begin{notation}
    We let $j \fc \cA_{\text{nu}}\hookrightarrow\cM$ denote the inclusion functor.
  \end{notation}

  \item \tb{(Triangle):} Let $\Pi' \fc \cC\cP \to \cC$ be the functor projecting onto the second factor, that is $(K,K')\mapsto K'$. Then there is a natural transformation
      $$\delta^* \fc j\circ SH^*(\cdot;R)\circ \Pi'\to j \circ SH^*(\cdot,\cdot;R)[1]\quad\text{as functors }\cC\cP\to\cM\,,$$
      that is for each $(K,K') \in \cC\cP$ we have a module map $\delta_{(K,K')}^* \fc SH^*(K';R) \to SH^*(K,K';R)$ of degree $1$, compatible with restrictions. Moreover, it fits into an exact triangle
      $$\xymatrix{SH^*(K,K';R) \ar[rr]^{\res^{(K,K')}_K} & & SH^*(K;R)\ar[ld]^{\res^K_{K'}} \\ & SH^*(K';R)\ar[lu]_(0.3){\delta_{(K,K')}^*}}$$

  \item \tb{(Mayer--Vietoris):} Let us define the following subcategory of $\cC^3$:\footnote{`CTD' stands for ``compact triples in descent.''}
      $$\mathcal{CTD} = \{(K,K',K'') \in \cC^3\,|\,K',K''\subset K\text{ and }K',K''\text{ are in descent}\}\,.$$
      Let us define the intersection and union functors $I, U \fc \mathcal{CTD} \to \cC\cP$ by
      $$I(K,K',K'') = (K,K'\cap K'') \,, \qquad U(K,K',K'') = (K,K'\cup K'')\,.$$
      There is then a natural transformation
      $$\Delta^* \fc j\circ SH(\cdot,\cdot;R) \circ I \to j\circ SH(\cdot,\cdot;R)[1] \circ U\quad \text{as functors }\mathcal{CTD}\to\cM\,,$$
      meaning whenever $(K,K',K'') \in \mathcal{CTD}$, then we have a degree $1$ linear map
      $$\Delta^*_{(K,K',K'')} \fc SH^*(K,K'\cap K'';R) \to SH^*(K,K'\cup K'';R)$$
      compatible with restrictions. It moreover fits into the exact \tb{Mayer--Vietoris triangle}
      $$\small\xymatrix{SH^*(K,K'\cup K'';R) \ar[rr]^-{\big(\res^{(K,K'\cup K'')}_{(K,K')},\,\res^{(K,K'\cup K'')}_{(K,K'')}\big)} & & SH^*(K,K';R)\oplus SH^*(K,K'';R)\ar[ld]|{\res^{(K,K')}_{(K,K'\cap K'')}-\res^{(K,K'')}_{(K,K'\cap K'')}} \\ & SH^*(K,K'\cap K'';R)\ar[lu]|{\Delta^*_{(K,K',K'')}}}$$

  \item \tb{(Product):}
      Given $(K,K',K'')\in\mathcal{CTD}$, there exists a map $\wt{*}$ fitting in the following commutative diagram:
      $$\xymatrix{SH^*(K,K';R)\otimes SH^*(K,K'';R) \ar[r]^-{\wt*} \ar[d]_{\res^{(K,K')}_K \otimes \res^{(K,K'')}_K} & SH^*(K,K'\cup K'';R) \ar[d]^{\res^{(K,K'\cup K'')}_K} \\ SH^*(K;R) \otimes SH^*(K;R)\ar[r]^-{*} & SH^*(K;R)}$$
      where $*$ stands for the Tonkonog--Varolg\"une\c s product, see Section \ref{ss:rel_SH_discussion}. Specializing to the case $K'=K''$, we see that this defines a product on $SH^*(K,K';R)$. \emph{We require that it coincide with the given product on $SH^*(K,K';R)$ as an algebra.}
\end{itemize}

\noindent In Section 4, we prove
\begin{thm}\label{thm:existence_relative_SH_pairs}There exists a relative symplectic cohomology of pairs with coefficients in $\Lambda_{\geq0}$.
\end{thm}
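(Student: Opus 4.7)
The plan is to build $SH^*(K,K';R)$ as the cohomology of a shifted mapping cone at the cochain level, adapting the classical construction of relative cohomology from a pair to the Floer setting. Varolgunes's symplectic cohomology $SH^*(K;R)$ arises as the cohomology of an explicit cochain complex $SC^*(K;R)$ obtained by completing the telescope of Floer complexes associated to an acceleration datum, and the restriction $\res^L_K$ is induced by a strict chain-level map $\rho^L_K\fc SC^*(L;R)\to SC^*(K;R)$ coming from a coherent choice of acceleration data (as in Section \ref{s:proofs_properties_SH_pairs}). For a compact pair $(K,K')$, I would set
\[
SC^*(K,K';R) := \cone\bigl(\rho^K_{K'}\bigr)[-1]\,,\qquad SH^*(K,K';R) := H^*\bigl(SC^*(K,K';R)\bigr)\,,
\]
so that the tautological short exact sequence of cochain complexes
\[
0 \to SC^*(K';R)[-1] \to SC^*(K,K';R) \to SC^*(K;R) \to 0
\]
simultaneously produces the triangle axiom and the connecting map $\delta^*$. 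Functoriality in $\cC\cP$ follows from functoriality of the chain-level restrictions: a morphism of pairs induces a commutative square of chain complexes and hence a chain map of the corresponding cones, with compositions handled by the coherence of the chosen acceleration data.

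The normalization axiom reduces to the fact that $SC^*(\varnothing;R)$ is contractible, so the mapping cone of $\rho^K_\varnothing$ is canonically quasi-isomorphic to $SC^*(K;R)$, yielding a natural isomorphism $SH^*(K,\varnothing;R)\cong SH^*(K;R)$ compatible with restrictions. The Mayer--Vietoris axiom on a triple $(K,K',K'')\in\mathcal{CTD}$ would follow from Varolgunes's chain-level Mayer--Vietoris theorem: the descent hypothesis on $K',K''$ gives a canonical quasi-isomorphism from $SC^*(K'\cup K'';R)$ to the homotopy pullback of $SC^*(K';R)\to SC^*(K'\cap K'';R)\leftarrow SC^*(K'';R)$. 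Placing this Mayer--Vietoris square under $SC^*(K;R)$ and passing to mapping cones in the horizontal direction yields a $3\times 3$ grid of cochain complexes; a standard diagram chase, or an application of the octahedral axiom in the derived category of $R$-modules, extracts the required exact Mayer--Vietoris triangle for the pair-level groups, together with naturality in $\mathcal{CTD}$.

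The main obstacle will be the product axiom. At the chain level, the Floer-theoretic product makes $SC^*(K;R)$ into an associative-up-to-coherent-homotopy algebra, and the restrictions $\rho^K_{K'},\rho^K_{K''},\rho^K_{K'\cup K''}$ are algebra maps compatible with it; the descent of $K',K''$ produces, via the Mayer--Vietoris comparison above, a coherent factorization of the product on $SC^*(K;R)$ through $SC^*(K'\cup K'';R)$ on factors that become trivial after restricting to $K'$ and $K''$ respectively. Translating this into an explicit chain-level pairing
\[
SC^*(K,K';R)\otimes SC^*(K,K'';R) \to SC^*(K,K'\cup K'';R)
\]
of mapping cones uses the standard homotopy formulas for products on homotopy fibers, but verifying that the resulting pairing is natural throughout $\mathcal{CTD}$ and descends to a degree zero algebra morphism on cohomology requires genuine bookkeeping with the chain-level homotopies; this is where the bulk of the technical work lies. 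The remaining properties---graded skew-commutativity and associativity on cohomology---are then automatic from the corresponding properties of the Floer product on the absolute complexes.
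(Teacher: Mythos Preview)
Your proposal is correct and follows essentially the same route as the paper: define $SH^*(K,K')$ as the cohomology of the cocone (equivalently, $\cone[-1]$) of the chain-level restriction $SC^*(K)\to SC^*(K')$, extract the triangle from the tautological short exact sequence, obtain Mayer--Vietoris by applying the cocone to Varolgunes's descent square, and build the product via chain-level Floer products together with the Mayer--Vietoris comparison. The only point to flag is that the chain-level restrictions and the squares they form are not strictly commutative but only homotopy-commutative, which is why the paper develops the machinery of cubes, fillings, and cocones to keep track of the coherence data; your phrase ``coherent choice of acceleration data'' gestures at this, but the actual bookkeeping is where most of the work in Section~\ref{s:proofs_properties_SH_pairs} lives.
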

\noindent The definition of a relative symplectic cohomology of pairs is given in Section \ref{sss:rel_SH_pairs}, see Definition \ref{def:rel_complex_pair}. In Section \ref{ss:constructing_product} we construct the product and prove that it indeed satisfies the Product axiom. We prove the Normalization and the Triangle property in full detail in Section \ref{ss:SH_pair_well_defined}. There also the Mayer--Vietoris property is elaborated upon, with the exception of the compatibility of the natural transformation with restrictions, but this compatibility can be proved using techniques appearing in Section \ref{ss:SH_pair_well_defined}.

\begin{rem}\label{rem:restricted_products}
It is very likely that, using the techniques of Section \ref{s:proofs_properties_SH_pairs}, it is possible to prove that the product we construct satisfies the following stronger version of the Product axiom: Let $\Pi',\Pi'' \fc \mathcal{CTD} \to \cC\cP$ be the projection functors $\Pi'(K,K',K'') = (K,K')$, $\Pi''(K,K',K'') = (K,K'')$; then there is a natural transformation
$$\wt * \fc SH(\cdot,\cdot;R)\circ \Pi' \otimes SH(\cdot,\cdot;R)\circ \Pi'' \to SH(\cdot,\cdot;R) \circ U\quad\text{as functors } \mathcal{CTD} \to \cA_{\text{nu}}\,,$$
that is for $(K,K',K'') \in \mathcal{CTD}$ there is a degree zero algebra morphism
$$\wt * \fc SH(K,K';R) \otimes SH(K,K'';R) \to SH(K,K'\cup K'';R)\,,$$
compatible with restriction morphisms. The weaker version as stated above does not contain the requirement that $\wt *$ be an algebra morphism or that it be compatible with restrictions. Since the weaker version suffices in order to construct an IVQM as stated in our Main Theorem \ref{thm:QH_IVQM_is_IVQM}, we opted for only proving the weaker version.
\end{rem}

It is plausible that the triangle property can be generalized to a long exact sequence of a general triple, just like in ordinary cohomology. It is also conceivable that other axioms can be formulated and proved for the relative symplectic cohomology of a pair.

If $R$ is any flat $\Lambda_{\geq 0}$-algebra, for instance $R = \Lambda$, then
we have the following result.
\begin{coroll}\label{cor:SH_pairs_change_coeffs}
  Let $SH^*(\cdot,\cdot)$ be a relative symplectic cohomology of pairs with coefficients in $\Lambda_{\geq0}$. Then $SH^*(\cdot,\cdot;R):=SH^*(\cdot,\cdot)\otimes_{\Lambda_{\geq0}}R$ is a relative symplectic cohomology of pairs with coefficients in $R$. \qed
\end{coroll}

\subsection{Discussion}\label{ss:rel_SH_discussion}

\paragraph*{The product structure on $SH^*(\cdot)$.} In \cite{tonkonog2020super} Tonkonog and Varolg\"une\c s construct an associative skew-commutative product on $SH^*(K;\Lambda)$, and show that it possesses a unit, and that restriction maps are unital algebra morphisms. To this end they define so-called raised symplectic cohomology, and construct the product and the unit on the level of cohomology. They note in Remark 5.16, that it is possible to construct the product directly on $SH^*(K)$ without tensoring with $\Lambda$.

Such a product is constructed in Section \ref{ss:constructing_product}, together with a product on $SH(\cdot,\cdot;\Lambda_{\geq0})$ in a way which conforms to the Product axiom. Although this product on $SH^*(\cdot)$ cannot carry a unit, as explained in Remark 5.16 in \cite{tonkonog2020super}, it does have a similar structure, which upon tensoring with $\Lambda$ furnishes a unit. This structure consists of a family of elements $T^\lambda_K \in SH^0(K)$ for $\lambda>0$, with the property that $T^\lambda_K*\alpha = T^\lambda\alpha$ for $\alpha \in SH^*(K)$, and such that $\res^K_{K'}(T^\lambda_K) = T^\lambda_{K'}$ for $K'\subset K$. These elements are defined as follows: under the canonical isomorphism $SH^*(M) = H^*(M)\otimes \Lambda_{>0}$, let $T^\lambda_M \in SH^0(M)$ be the element corresponding to $1\otimes T^\lambda$, and put $T^\lambda_K:=\res^M_K(T^\lambda_M)$.

It is possible to show, using standard Floer-theoretic techniques, that the product
$$SH^*(M)\otimes SH^*(K) \to SH^*(K)$$
maps $T^\lambda_M\otimes\alpha \mapsto T^\lambda\alpha$. It then follows from the compatibility of the product with restrictions that
$$T^\lambda_K*\alpha=\res^M_K(T^\lambda_M)*\alpha = T^\lambda_M*\alpha=T^\lambda\alpha\,,$$
as claimed.

\paragraph*{The product structure on $SH^*(\cdot,\cdot)$ and the Mayer--Vietoris property.} Let us provide intuition for the fact that the descent property, which holds, for instance, when the sets Poisson commute, appears in the seemingly unrelated narrative about the product on symplectic cohomology of pairs. We employ an informal analogy between symplectic topology and basic algebraic topology via the following glossary: the relative symplectic cohomology corresponds to the singular cohomology, and the relative symplectic cohomology of a pair corresponds to the singular cohomology of a pair. Since the relative symplectic cohomologies of subsets in descent satisfy the Mayer-Vietoris property (see \cite{varolgunes2018mayer}), the starting point of our discussion is a pair of subspaces $A,B$ of a topological space $X$ satisfying the Mayer-Vietoris short exact sequence
$$
0 \rightarrow C_*(A\cap B) \rightarrow C_*(A)\oplus C_*(B) \rightarrow C_*(A \cup B) \rightarrow 0\,,$$
where $C_*$ stands for the singular complex. Compare it with the obvious short exact sequence
$$0 \rightarrow C_*(A\cap B) \rightarrow C_*(A)\oplus C_*(B) \rightarrow C_*(A)+C_*(B) \rightarrow 0\;,$$ where $C_*(A)+C_*(B) \subset C_*(X)$ stands for the sum of the subspaces $C_*(A),C_*(B) \subset C_*(X)$. From the long exact homology sequences and the 5-lemma, we see that the natural chain map
$$C_*(A)+C_*(B) \to C_*(A \cup B)$$
induces an isomorphism in homology. Thus $A,B$ is an {\it excisive pair} in the terminology of \cite[Definition 3.2]{davis2001lecture}. As explained \emph{ibid.}, for such a pair one has a well-defined cup product on relative cohomology
$$H^p(X,A) \otimes H^q(X,B) \to H^{p+q}(X, A \cup B)\;.$$
Roughly speaking, in Section \ref{ss:constructing_product} below we elaborate on the implication ``Varolg\"une\c s's Mayer--Vietoris $\Rightarrow$ product on $SH^*(\cdot,\cdot)$"
for subsets in descent in the context of symplectic cohomology, which is the subject of the product property stated above. The main technical difficulty is that various diagrams, including those containing the Mayer-Vietoris short exact sequence, commute only up to homotopy, forcing us to use tools of homotopical algebra.

\begin{rem}
We believe that for the regions appearing in Theorem \ref{thm:Liouville_dom_idx_bdd_SH} the existence of following exact triangle can be proved, relating $SH^*(M,K;\Lambda)$ to the $SH^+$-invariant of $K$ (see \cite{cieliebak2018symplectic}) and to $H^*(M,K;\Lambda)$:
\begin{equation*}
	\xymatrix{
		{H^*(M,K;\Lambda)} \ar[rr] &                                                  & {SH^*(M,K;\Lambda)} \ar[ld] \\
		                      & SH^{+,*}(\widehat{K};\Lambda)[-1] \ar[lu]^{+1} &
	}\,.
\end{equation*}
 Details will appear elsewhere.
\end{rem}

\section{Proof of Main Theorem \ref{thm:QH_IVQM_is_IVQM}}\label{s:proof_main_thm}

Let us recall the formulation of the theorem. For open $U \subset M$ put
$$\theta(U):=\ker \big(\res^M_{M\setminus U} \fc SH^*(M;\Lambda) \to SH^*(M\setminus U;\Lambda)\big) \subset SH^*(M;\Lambda) = QH^*(M)\,.$$
We have defined the set function $\tau$ with values in $\cI(QH^*(M))$ as follows:
$$\tau(K) = \bigcap_{U\text{ open}\supset K}\theta(U)\quad \text{and}\quad \tau(V) = \bigcup_{L\text{ compact}\subset V}\tau(L)$$
for compact $K \subset M$ and open $V\subset M$. The theorem asserts that $\tau$ is a $QH^*(M)$-IVQM.

\begin{rem}\label{rem:theta_monotone}
  Note that thanks to the functoriality of $SH^*$ with respect to inclusions, $\theta$ is monotone: $U\subset V$ implies $\theta(U)\subset \theta(V)$.
\end{rem}
\noindent Given an $\cI(QH^*(M))$-valued function $\eta$ defined on open subsets of $M$, its \tb{regularization} is
$$U\mapsto\bigcup\{\eta(U')\,|\,U'\text{ open with }\ol{U'}\subset U\}\,.$$
\noindent The following is a nice exercise which uses the definition of $\tau$, the monotonicity of $\theta$, and the fact that $M$ is a normal space:
\begin{lemma}
  The regularization of $\theta$ is $\tau$. \qed
\end{lemma}

%
%

Let us call a set function a \tb{weak $QH^*(M)$-IVQM} if it satisfies all the properties of a $QH^*(M)$-IVQM except continuity. Theorem \ref{thm:QH_IVQM_is_IVQM} is an immediate consequence of the following two propositions.
\begin{prop}\label{prop:theta_weak_IVQM}
  The function $\theta$ is a weak $QH^*(M)$-IVQM satisfying the invariance and vanishing properties.
\end{prop}
\begin{prop}\label{prop:regularization_weak_IVQM}
  The regularization of a weak $QH^*(M)$-IVQM is a $QH^*(M)$-IVQM. Moreover, regularization preserves invariance and vanishing.
\end{prop}
\noindent The rest of this section is dedicated to proving these propositions. We would like to point out that the most nontrivial property here is the quasi-multiplicativity of $\theta$, and this is where the product structure on the relative symplectic cohomology of pairs comes into play.

\begin{proof}[Proof of Proposition \ref{prop:theta_weak_IVQM}]
See Remark \ref{rem:theta_monotone} regarding the monotonicity of $\theta$. It remains to show that $\theta$ satisfies normalization, additivity, quasi-multiplicativity, intersection, invariance, and vanishing. We fix a relative symplectic cohomology of pairs with coefficients in $\Lambda$, $SH^*(\cdot,\cdot;\Lambda)$, whose existence is guaranteed by Theorem \ref{thm:existence_relative_SH_pairs} and Corollary \ref{cor:SH_pairs_change_coeffs} for the case $R = \Lambda$.

\tb{(Normalization):} Since $\res^M_M \fc SH^*(M;\Lambda)\to SH^*(M;\Lambda)$ is the identity, we conclude that $\theta(\varnothing)=0$; $\theta(M)=\ker(SH^*(M;\Lambda)\to SH^*(\varnothing;\Lambda)=0)=SH^*(M;\Lambda)$.	

\tb{(Additivity):} Let $U,V$ be disjoint open subsets, and let $A=M\setminus U$ and $B=M\setminus V$. Note that $A\cup B=M$. By definition we have
$$\theta(U)=\ker\res^M_{A},\,\,\theta(V)=\ker\res^M_{ B},\,\,\theta(U\cup V)=\ker\res^M_{A\cap B}.$$
Thus we need to show that
$$\ker\res^M_{A\cap B}=\ker\res^M_{A}+\ker\res^M_{B}.$$
Note that $A,B$ Poisson commute by Example \ref{ex:disjoint_sets_commute}. Since $M=A\cup B$, it follows that $SH^*(M,A\cup B;\Lambda) = SH^*(M,M;\Lambda) = 0$ by the Triangle axiom, and since $A,B$ Poisson commute, we can apply the Mayer--Vietoris sequence for $SH^*(\cdot,\cdot;\Lambda)$ to conclude that
$$f:=\res^{(M,A)}_{(M,A\cap B)} - \res^{(M,B)}_{(M, A\cap B)} \fc SH^*(M,A;\Lambda) \oplus SH^*(M,B;\Lambda) \to SH^*(M,A\cap B;\Lambda)$$
is an isomorphism. Consider the following commutative diagram with exact columns:
$$\xymatrix{SH^*(M,A;\Lambda) \oplus SH^*(M,B;\Lambda) \ar[rr]^-f \ar[d]_{\res^{(M,A)}_M \oplus \res^{(M,B)}_M} && SH^*(M,A\cap B;\Lambda) \ar[d]^{\res^{(M,A\cap B)}_M} \\ SH^*(M;\Lambda) \oplus SH^*(M;\Lambda) \ar[rr]^-{\pr_1-\pr_2} \ar[d]_{\res^M_A\oplus \res^M_B} && SH^*(M;\Lambda) \ar[d]^{\res^M_{A\cap B}} \\ SH^*(A;\Lambda)\oplus SH^*(B;\Lambda)\ar[rr]^-{\res^A_{A\cap B} - \res^B_{A\cap B}} && SH^*(A\cap B;\Lambda)}$$
where in the middle row $\pr_i$ is the projection to the $i$-th factor. The projections appear here because $\res^M_M = \id_{SH^*(M;\Lambda)}$. The commutativity of both squares follows from the functoriality of restriction maps with respect to inclusions---of compact sets for the bottom square, and of pairs of compact sets for the top square. The exactness of the columns follows from the Triangle axiom.

We now have
\begin{align*}
  \ker \res^M_{A\cap B} &= \im \res^{(M,A\cap B)}_M &&\text{by exactness}\\
                        &= \im \big(\res^{(M,A\cap B)}_M\circ f\big) && f\text{ is an isomorphism}\\
                        &= \im \big((\pr_1-\pr_2)\circ (\res^{(M,A)}_M \oplus \res^{(M,B)}_M)\big) &&\text{top square commutes}\\
                        &\stackrel{*}{=} \im \big(\res^{(M,A)}_M -\res^{(M,B)}_M \big)\\
                        &= \im \res^{(M,A)}_M + \im\res^{(M,B)}_M\\
                        &= \ker \res^M_A + \ker\res^M_B && \text{by exactness},
\end{align*}
as claimed. For $\stackrel{*}{=}$ we note that $(\pr_1-\pr_2)\circ \big(\res^{(M,A)}_M \oplus \res^{(M,B)}_M\big)=\res^{(M,A)}_M -\res^{(M,B)}_M$ as maps
$$SH^*(M,A;\Lambda) \oplus SH^*(M,B;\Lambda) \to SH^*(M;\Lambda)\,.$$
	
\tb{(Quasi-multiplicativity):} Let $U,V\subset M$ be open commuting subsets. Then their complements $M\setminus U,M\setminus V$ are compact commuting subsets. Let $a\in \theta(U)$ and $b\in\theta(V)$. By the Triangle property of a relative symplectic cohomology of pairs there are $x\in SH^*(M,M\setminus U;\Lambda)$ and $y\in SH^*(M,M\setminus V;\Lambda)$ such that $\res^{(M,M\setminus U)}_M(x)=a$ and $\res^{(M,M\setminus V)}_M(y)=b$. By the Product axiom we obtain
$$a*b=\res^{(M,M\setminus U)}_M(x)*\res^{(M,M\setminus V)}_M(y)=\res^{(M,M\setminus U\cup M\setminus V)}_M(x\wt*y)=\res^{(M,M\setminus (U\cap V))}_M(x\wt*y).$$
Using the Triangle property again, we obtain
$$\im \res^{(M,M\setminus (U\cap V)}_M =\ker \res^M_{M\setminus (U\cap V)}=\theta(U\cap V).$$
This implies that $a*b\in \theta(U\cap V)$, hence $\theta(U)*\theta(V)\subset \theta(U\cap V)$.

\tb{(Intersection):} Let $U,V$ be two open subsets which cover $M$. The diagram
$$\xymatrix{&SH^*(M;\Lambda) \ar[ld]|-{\res^M_{(U\cap V)^c}} \ar[rd]|-{\big(\res^M_{U^c},\res^M_{V^c}\big)}& \\ SH^*((U\cap V)^c;\Lambda) \ar[rr]_{\big(\res^{(U\cap V)^c}_{U^c},\res^{(U\cap V)^c}_{V^c}\big)}&& SH^*(U^c;\Lambda)\oplus SH^*(V^c;\Lambda)}$$
commutes thanks to the functoriality of restrictions with respect to inclusions. Since $U^c,V^c$ are disjoint compact subsets, they Poisson commute by Example \ref{ex:disjoint_sets_commute}. By the Mayer--Vietoris exact sequence (\cite{varolgunes2018mayer}) for $U^c,V^c$, the bottom map is an isomorphism, since the third group in the sequence is $SH^*(U^c\cap V^c;\Lambda) = SH^*(\varnothing;\Lambda)=0$. It follows that
\begin{align*}
  \theta(U\cap V) &= \ker \res^M_{(U\cap V)^c} \\
                  &= \ker \big(\res^M_{U^c},\res^M_{V^c}\big) && \text{from the diagram}\\
                  &= \ker \res^M_{U^c} \cap \ker \res^M_{V^c}\\
                  &= \theta(U) \cap \theta(V)\,,
\end{align*}
as required.

\tb{(Invariance):} As mentioned in \cite{varolgunes2018mayer}, a symplectomorphism $\phi$ of $M$ induces relabeling isomorphisms $\phi^K_* \fc SH^*(K;\Lambda)\cong SH^*(\phi(K);\Lambda)$, which commute with restrictions. It is not hard to show, using essentially Morse-theoretic arguments, that $\phi^M_*$ is the obvious action of $\phi$ on $SH^*(M;\Lambda) = H^{*\bmod 2N_M}(M;\Lambda)$, and that in particular for $\phi \in \Symp_0(M,\omega)$ we have $\phi^M_*=\id_{SH^*(M;\Lambda)}$, which implies for open $U\subset M$:
\begin{multline*}
\theta(\phi(U)) = \ker \res^M_{M\setminus \phi(U)} = \ker \big(\res^M_{M\setminus \phi(U)} \circ \phi^M_*\big)\\
 = \ker \big(\phi_*^{M\setminus U}\circ \res^M_{M\setminus U}\big) = \ker \res^M_{M\setminus U} = \theta(U)\,.
\end{multline*}

\tb{(Vanishing):} Let $K\subset M$ be a displaceable compact subset. Varolg\"une\c s proved that $SH^*(K;\Lambda)=0$, see  \cite[Theorem 1.3.1]{Varolgunes_2018_PhD}. This shows that
$$\theta(M\setminus K)=\ker\res^M_{M\setminus (M\setminus K)}=\ker\res^M_{K}=SH^*(M;\Lambda).$$
Additionally, since $K$ is a displaceable compact subset, there exists a displaceable region $W$ which contains $K$ in its interior. Let $Z$ be the closure of the complement of $W$ and denote $U=\Int(W)$. The compact sets $W,Z$ commute, and since $Z\cap W$ is displaceable, we have $SH^*(Z\cap W;\Lambda)=0$. Using Mayer-Vietoris we obtain
$$SH^*(M;\Lambda)=SH^*(Z\cup W;\Lambda)\xrightarrow[\cong]{\big(\res^M_Z,\res^M_W\big)} SH^*(Z;\Lambda)\oplus SH^*(W;\Lambda)=SH^*(Z;\Lambda).$$
Thus $\res^M_Z:SH^*(M;\Lambda)\to SH^*(Z;\Lambda)$ is an isomorphism, therefore
$$\theta(U)=\ker\res^M_{M\setminus U}=\ker\res^M_{Z}=0\,.$$	
\end{proof}

In order to prove Proposition \ref{prop:regularization_weak_IVQM}, we will introduce an auxiliary notion. Given an open $U\subset M$, an \tb{approximating chain} for $U$ is a sequence $\{U_j\}_j$ of open subsets of $U$ with $\ol{U_j}\subset U_{j+1}$ for all $j$ and such that $U = \bigcup_jU_j$. Note that every open set has an approximating chain. We will use the following elementary fact multiple times: if $K\subset U$ is compact and $\{U_j\}_j$ is an approximating chain for $U$, then there is $j$ such that $K\subset U_j$. In particular, any two approximating chains $\{U_j\}_j,\{U_j'\}_j$ dominate each other, meaning for each $j$ there is $j'$ such that $\ol{U_j}\subset U'_{j'}$ and vice versa.

\begin{proof}[Proof of Proposition \ref{prop:regularization_weak_IVQM}]
Let $\eta$ be a weak $QH^*(M)$-IVQM, and let $\sigma$ be its regularization. If $U\subset M$ is open and $\{U_j\}_j$ is an approximating chain for $U$, then it is easy to see that $\sigma(U) = \bigcup_j \eta(U_j)$. Since $QH^*(M)$ is finite-dimensional, the ascending chain of ideals $\{\eta(U_j)\}_j$ stabilizes, that is there is $I\in\cI(QH^*(M))$ such that $\eta(U_j) = I$ for all $j$ large enough. It follows that $\sigma(U) = I$. We can now prove that $\sigma$ is an IVQM.

\tb{(Normalization):} Trivial.

\tb{(Monotonicity):} Let $U,V\subset M$ be open and assume $U\subset V$. Let $\{U_j\}_j,\{V_j\}_j$ be approximating chains for $U,V$, respectively. We can pick $j$ such that $\sigma(U) = \eta(U_j)$, $\sigma(V)=\eta(V_j)$, and $U_j\subset V_j$. It follows that $\sigma(U)\subset \sigma(V)$ by the monotonicity of $\eta$.

\tb{(Continuity):} Let $W\subset M$ be open and let $$W^{(1)} \subset W^{(2)} \subset \cdots \subset W$$ be open subsets whose union equals $W$. Denote by $\{W^{(k)}_i\}$ an approximating chain for $W^{(k)}$. For every $k$, one can choose $i(k)$ large enough so that $\eta\big(W^{(k)}_{i(k)}\big) = \sigma(W^{(k)})$ and so that the sequence $\{W^{(k)}_{i(k)}\}$ is an approximating chain for $W$. It follows that $\sigma(W) = \sigma(W^{(k)})$ for $k$ large enough, and the desired continuity follows.

\tb{(Additivity):} Let $U,V\subset M$ be disjoint open sets and let $\{U_j\}_j,\{V_j\}_j$ be approximating chains for $U,V$, respectively. It is easy to show that $\{U_j\cup V_j\}_j$ is an approximating chain for $U\cup V$. There is $j$ such that $\sigma(U) = \eta(U_j)$, $\sigma(V) = \eta(V_j)$, $\sigma(U\cup V)=\eta(U_j\cup V_j)$. Since $U_j,V_j$ are disjoint, it follows from the additivity of $\eta$ that $\eta(U_j\cup V_j) = \eta(U_j) + \eta(V_j)$ and the claim follows.

\tb{(Quasi-multiplicativity):} Here we need have the following lemma, proved below.
\begin{lemma}\label{lem:Poisson_comm_sets_admit_comm_approx_chains}
  If $U,V\subset M$ are open commuting sets, then they have approximating chains $\{U_j\}_j,\{V_j\}_j$ such that $U_j,V_j$ commute for each $j$.
\end{lemma}
Assuming the lemma for the moment, let $U,V\subset M$ be open commuting sets and pick approximating chains as in the lemma. Since
$$\ol{U_j\cap V_j}\subset \ol{U_j}\cap \ol{V_j} \subset U_{j+1}\cap V_{j+1} \quad \text{and}\quad \bigcup_j(U_j\cap V_j) = \bigcup_j U_j \cap \bigcup_j V_j = U \cap V\,,$$
it follows that $\{U_j\cap V_j\}_j$ is an approximating chain for $U\cap V$, therefore there is $j$ such that $\sigma(U) = \eta(U_j)$, $\sigma(V) = \eta(V_j)$, $\sigma(U\cap V) = \eta(U_j\cap V_j)$, and thus
$$\sigma(U)*\sigma(V) = \eta(U_j)*\eta(V_j)\subset \eta(U_j\cap V_j) = \sigma(U\cap V)\,,$$
where the containment is thanks to the quasi-multiplicativity of $\eta$.

\tb{(Intersection):} Let $U,V\subset M$ be open sets which cover $M$ and let $\{U_j\}_j,\{V_j\}_j$ be approximating chains for them. We claim that there is $j$ such that $U_j,V_j$ cover $M$. To see this, note that $U^c\subset V$ is compact, therefore there is $k$ so that $U^c\subset V_k$, whence $V_k^c\subset U$, therefore there is $l$ so that $V_k^c\subset U_l$, which means that $V_k,U_l$ cover $M$, therefore \emph{a fortiori} $U_j,V_j$ cover $M$ for $j = \max\{k,l\}$.
We can increase $j$ so that in addition we have $\sigma(U) = \eta(U_j)$, $\sigma(V) = \eta(V_j)$. Since, as we have seen, $\{U_j\cap V_j\}_j$ is an approximating chain for $U\cap V$, we can further increase $j$ so that we also have $\sigma(U\cap V) = \eta(U_j\cap V_j)$. Since $U_j,V_j$ cover $M$, by the intersection property of $\eta$ we have
$$\sigma(U\cap V) = \eta(U_j\cap V_j) = \eta(U_j) \cap \eta(V_j) = \sigma(U) \cap \sigma(V)\,.$$

\tb{(Invariance):} If $U\subset M$ is open, $\{U_j\}_j$ is an approximating chain for $U$, and $\phi \in \Symp_0(M)$, then $\{\phi(U_j)\}_j$ is an approximation chain for $\phi(U)$. We can pick $j$ so that $\sigma(U) = \eta(U_j)$, $\sigma(\phi(U)) = \eta(\phi(U_j))$, and the invariance of $\sigma$ follows from that of $\eta$.

\tb{(Vanishing):} Let $K\subset M$ be displaceable and let $\{U_j\}_j$ be an approximating chain for $M \setminus  K$. It follows that for $j$ large enough, $M\setminus U_j$ is displaceable, whence $\eta(U_j) = \eta\big(M\setminus (M\setminus U_j)\big) = QH^*(M)$, and taking $j$ large enough so that $\sigma(M \setminus  K) = \theta(U_j)$ we obtain $\sigma(M \setminus  K) = QH^*(M)$. For the second part of the vanishing property, let $U\supset K$ be open such that $\eta(U) = 0$. If $V$ is an open set with $\ol V\subset U$, then by the monotonicity of $\eta$ we have $\eta(V) = 0$, whence
$$\sigma(U) = \bigcup\{\eta(V)\,|\,V\text{ open with }\ol V\subset U\} = 0\,.$$
\end{proof}
\noindent It remains to prove Lemma \ref{lem:Poisson_comm_sets_admit_comm_approx_chains}.
\begin{proof}[Proof of Lemma \ref{lem:Poisson_comm_sets_admit_comm_approx_chains}] By definition there are Poisson-commuting $f,g\in C^\infty(M,[0,1])$ such that $M\setminus U=f^{-1}(0),\,M\setminus V=g^{-1}(0)$. For every $\alpha\in (0,1]$ consider the open sets $U_\alpha=f^{-1}((\alpha,1])$ and $V_\alpha=g^{-1}((\alpha,1])$. Note that for every $0<\beta<\alpha\leq 1$ we have $\overline{U_\alpha}\subset U_\beta$ and $\overline{V_\alpha}\subset V_\beta$, moreover,     $$U=\bigcup_{\alpha\in (0,1]} U_\alpha,\,\,\,\text{and}\,\,\,V=\bigcup_{\alpha\in (0,1]} V_\alpha\,.$$
Thanks to Remark \ref{rem:preimages_involutive_maps}, $U_\alpha,V_\alpha$ commute. It follows that $\{U_j:=U_{1/j}\}_{j \in \N}$ and $\{V_j:=V_{1/j}\}_{j \in \N}$ are approximating chains as required.
\end{proof}

\section{Constructing a symplectic cohomology of pairs}\label{s:proofs_properties_SH_pairs}

In this section we construct a relative symplectic cohomology of pairs with coefficients in $\Lambda_{\geq0}$ and prove its properties as announced in Section \ref{s:relative_SH_pairs}, thereby proving Theorem \ref{thm:existence_relative_SH_pairs}. The definition appears in Section \ref{sss:rel_SH_pairs}. The product is constructed in Section \ref{ss:constructing_product}, and it is the highlight of this section and of the whole paper. The proof of the rest of the properties occupies Section \ref{ss:SH_pair_well_defined}. This relies on the algebraic language of cubes, which we outline in Sections \ref{ss:cubes}--\ref{ss:rays_telescopes}. The language of cubes was introduced by Varolg\"une\c s in \cite{varolgunes2018mayer}. It is an explicit realization of some $\infty$-categorical aspects of cochain complexes.

\subsection{Completion for $\Lambda_{\geq0}$-modules}\label{ss:completion}

Completion of $\Lambda_{\geq0}$-modules plays an essential role in Varolg\"une\c s's definition of symplectic cohomology, therefore we describe it here. First, the Novikov field $\Lambda$ carries a valuation $\nu \fc \Lambda \to \R\cup\{+\infty\}$ given by $\nu(0) = \infty$ and
$$\nu\Big(\textstyle \sum_{i=1}^{\infty}c_iT^{\lambda_i}\Big) = \lambda_1$$
provided the $\lambda_i$ form a strictly increasing sequence and $c_1 \neq 0$. Using $\nu$, we can define the \tb{interval modules}
$$\Lambda_{\geq r} = \nu^{-1}([r,\infty])\,,\qquad \Lambda_{>r}=\nu^{-1}((r,\infty])\,,\qquad \Lambda_{[a,b)}=\Lambda_{\geq a}/\Lambda_{\geq b}\text{ for }a< b\,.$$
Given a $\Lambda_{\geq0}$-module $A$, its \tb{completion} is the $\Lambda_{\geq 0}$-module
$$\wh A:=\varprojlim_{r\to\infty}\big(A\otimes\Lambda_{[0,r)}\big)\,.$$
Completion is an endofunctor on the category of $\Lambda_{\geq 0}$-modules. From the universal property of inverse limits we obtain a morphism
$$A \to \wh A\,,$$
also called \tb{completion}. We say that $A$ is \tb{complete} if the completion morphism is an isomorphism. The interval modules are typical examples. On the other hand, $\wh\Lambda = 0$. Since completion commutes with finite direct sums, a finite direct sum of interval modules is likewise complete.

We will use the following fact.
\begin{lemma}\label{lem:completion_exact_right_mod_flat}
  Let
  $$0 \to A' \to A \to A'' \to 0$$
  be an exact sequence of $\Lambda_{\geq0}$-modules, where $A''$ is flat. Then the corresponding sequence of completions is exact.
\end{lemma}
\begin{proof}
  Since $A''$ is flat, we have $\Tor_1^{\Lambda_{\geq0}}\big(A'',\Lambda_{[0,r)}\big)=0$ for all $r > 0$, therefore from the long exact sequence of Tor functors we conclude that each sequence
  \begin{equation}\label{eq:exact_seq_tensored_Lambda_0_r}
    0 \to A' \otimes\Lambda_{[0,r)} \to A \otimes\Lambda_{[0,r)}\to A'' \otimes\Lambda_{[0,r)}\to 0
  \end{equation}
  is exact. Since for $s \geq r$ the map $\Lambda_{[0,s)} \to \Lambda_{[0,r)}$ is surjective, and since the tensor product preserves surjectivity, we see that $A' \otimes\Lambda_{[0,s)} \to A'\otimes\Lambda_{[0,r)}$ is also onto, which means that the system of modules $\big(A'\otimes\Lambda_{[0,r)}\big)_r$ satisfies the Mittag--Leffler condition, which implies that the inverse limit of the sequences \eqref{eq:exact_seq_tensored_Lambda_0_r} is likewise exact, which is what the lemma asserts.
\end{proof}

\medskip

\noindent\tb{Convention:} \emph{We extend the completion functor to the categories of graded $\Lambda_{\geq0}$-modules and chain complexes by completing degree-wise.}

\subsection{Cubes}\label{ss:cubes}

The language of cubes is very convenient in order to work with the algebraic data arising when defining relative symplectic cohomology. Throughout we fix a unital commutative ring $R$ and we work in the category of graded $R$-modules and graded maps, where the grading is over $\Z$ or over $\Z_k$ with $k$ even. Recall that given graded modules $C,D$ with graded components $C^i,D^i$, a module map $f \fc C \to D$ is \tb{graded of degree $d$} if $f(C^i) \subset D^{i+d}$. We denote the degree of $f$ by $|f|$.

Fix a nonnegative integer $n$. We call a subset of $[0,1]^n$ a \tb{face} if it is given by setting some of the coordinates to either $0$ or $1$; the rest of the coordinates are referred to as the \tb{free coordinates of $F$}. Given a face $F$ its \tb{dimension}, denoted $|F|$, is the number of its free coordinates, while its \tb{initial vertex} $\ini F$ and \tb{terminal vertex} $\ter F$, are the points of $F$ closest to or farthest from the origin, respectively (relative to the Euclidean metric); in this case we write $F \fc v \to v'$, where $v = \ini F$, $v' = \ter F$. Note that $F$ is determined by its initial and terminal vertices. We also say that $v,v'$ \tb{span} $F$. If $F',F''$ are faces, we write $F = F'\cdot F''$ to denote the situation in which
$$\ini F = \ini F'\,, \quad \ter F' = \ini F''\,,\quad \ter F'' = \ter F\,.$$

\tb{An (algebraic) $n$-cube $\cC$} is a pair $(\{\cC^v\}_{v\in\{0,1\}^n},\{f^\cC_F\}_{F\subset [0,1]^n\text{ a face}})$, where each $\cC^v$ is a graded module, and for every face $F \subset [0,1]^n$ we have a graded module morphism $f^\cC_F \fc \cC^{\ini F} \to \cC^{\ter F}$ of degree $1-|F|$, subject to the condition that for each face $F$ we have
\begin{equation}\label{eqn:cube_relation}
\sum_{F = F'\cdot F''}(-1)^{|F'|}\sgn(F',F'')f^\cC_{F''} f^\cC_{F'} = 0\,,
\end{equation}
where $\sgn(F',F'')$ is a sign defined as follows. Any face of $[0,1]^n$ comes equipped with a natural orientation coming from the ordering of its free coordinates. Then $\sgn(F',F'')$ is the intersection index of $F',F''$ inside $F$. For a more explicit description, assume first that we have a linearly ordered finite set $S=\{s_1<\dots<s_{k+l}\}$. Recall that a $(k,l)$-shuffle on $S$ is a permutation $\sigma \in S_S$ such that $\sigma(s_1)<\dots<\sigma(s_k)$ and $\sigma(s_{k+1})<\dots<\sigma(s_{k+l})$. If $S=S'\sqcup  S''$ with $|S'| = k$, $|S''|=l$, then there is a unique $(k,l)$-shuffle $\sigma_{S',S''}$ such that $\sigma(\{s_1,\dots,s_k\})=S'$, $\sigma(\{s_{k+1},\dots,s_{k+l}\})=S''$. Now let $S\subset \{1,\dots,n\}$ be the set of free coordinates of $F$, $S'$ the set of free coordinates of $F'$ and $S''$ the set of free coordinates of $F''$, so that $S = S'\sqcup S''$. Endow $S$ with the order induced from $\{1,\dots,n\}$. Then $\sgn(F',F'') := \sgn \sigma_{S',S''}$.

The above relations mean in particular that for each vertex $v$, $(\cC^v,f^\cC_{\{v\}})$ is a cochain complex, each $1$-dimensional edge yields a cochain map between its vertex modules, each $2$-dimensional face yields a homotopy between the two compositions of the maps running along its perimeter, and so on.

Note that the above sign only depends on the internal ordering of the free coordinates of $F',F''$, which means that for any face $G\subset [0,1]^n$ the pair $(\{\cC^v\}_{v},\{f^\cC_F\}_F)$ where $v$ ranges over the vertices of $G$ while $F$ over the subfaces of $G$, is itself a cube, provided we renumber the coordinates in their natural order in $\{1,\dots,n\}$. We will refer to such a cube as a \tb{subcube of $\cC$}, or, when the face $G$ is given, \tb{the subcube obtained by restricting $\cC$ to $G$}, and we denote it by $\cC|_G$.

If $\cA_0,\cA_1$ are $n$-cubes, a \tb{map from $\cA_0$ to $\cA_1$} is an $(n+1)$-cube $\cC$ such that $\cA_i = \cC|_{[0,1]^n\times\{i\}}$, $i=0,1$. In this case we write $\cA_0\xrightarrow{\cC}\cA_1$. We define the corresponding negated map $\cA_0\xrightarrow{-\cC}\cA_1$ by negating all the maps going from the vertices of $\cA_0$ to those of $\cA_1$. A trivial check shows that this is indeed a map. Note that if $F\subset [0,1]^n$ is a face and
$\cC' = \cC|_{F\times[0,1]},\cA'_i=\cA_i|_{F}=\cC|_{F\times\{i\}}$, $i=0,1$, then $\cC'$ is a map from $\cA_0'$ to $\cA_1'$: $\cA_0'\xrightarrow{\cC'}\cA_1'$.

A \tb{partial cube} is given by a collection of modules associated to some of the vertices of $[0,1]^n$, as well as maps between them corresponding to some of the faces of $[0,1]^n$, subject to the condition that the cube relation \eqref{eqn:cube_relation} is satisfied whenever all the maps in it are defined. This notion will be useful in Section \ref{ss:folding_pullbacks}, where we define pullbacks for V-shaped diagrams, which are a particular case of partial cubes.

\subsection{Direct sums and tensor products}\label{ss:direct_sums_tensor_products_cubes}

In various constructions related to the definition and proof of properties of the symplectic cohomology of a pair we will need direct sums and tensor products of cubes.

If $\cA,\cB$ are $n$-cubes, their \tb{direct sum} $\cA \oplus \cB$ is defined as the cube with vertices $(\cA\oplus\cB)^v = \cA^v\oplus \cB^v$ and face maps $f^{\cA\oplus\cB}_F = f^\cA_F\oplus f^\cB_F$. A trivial verification shows that this is indeed a cube. Note that in particular taking direct sums commutes with passing to subcubes, thus restricting $\cA\oplus \cA' \xrightarrow{\cC\oplus \cC'}\cB\oplus \cB'$ to a face $\wh F = F\times[0,1]$ yields $\cA|_F\oplus \cC'|_F \xrightarrow{\cC|_{\wh F}\oplus \cC|_{\wh F}}\cB|_F\oplus \cB'|_F$.

Before we define tensor products of cubes, let us recall the notion of graded tensor product of graded maps: if $V,V',W,V'$ are graded modules and if $f \fc V \to W$, $f' \fc V'\to W'$ are graded maps, their \tb{graded tensor product} $f\otimes f' \fc V\otimes V' \to W\otimes W'$ is defined by $(f\otimes f')(v\otimes v') = (-1)^{|f'||v|}f(v)\otimes f'(v')$. \emph{All the tensor products of graded maps below are taken in the graded sense.}

If $(C,d)$, $(C',d')$ are modules endowed with differentials, that is degree $1$ maps squaring to zero, their tensor product is the graded module $(C\otimes C',d\otimes \id_{C'}+\id_{C}\otimes d')$, where the tensor product is taken in the graded sense. This amounts to the usual Leibnitz rule for differentials on the tensor product of graded modules.

Let now $\cA,\cB$ be a $k$-cube and an $l$-cube, respectively. We are going to define their \tb{tensor product} $\cA\otimes \cB$, which is a $(k+l)$-cube, as follows. The vertex modules are
$$(\cA\otimes\cB)^{i_1\dots i_{k+l}} = \cA^{i_1\dots i_k}\otimes \cB^{i_{k+1}\dots i_{k+l}}\,.$$
The face maps $f^{\cA\otimes\cB}_F$ are defined as follows. If $F=\{(v_1,\dots,v_k,w_1,\dots,w_l)\}$ has dimension zero and we let $v=(v_1,\dots,v_k)$, $w=(w_1,\dots,w_l)$, then the corresponding face map, that is the differential on the module $\cA^v\otimes \cB^w$, is simply the differential on the tensor product (in the graded sense!):
$$f^{\cA\otimes\cB}_F = f^\cA_{\{v\}}\otimes \id_{\cB^w} + \id_{\cA^v}\otimes f^\cB_{\{w\}}\,.$$

If $F$ has positive dimension, let $F'\subset [0,1]^k$, $F''\subset [0,1]^l$ be faces such that $F = F'\times F''\subset [0,1]^k\times[0,1]^l = [0,1]^{k+l}$. We have three cases:
\begin{enumerate}
  \item $|F'|,|F''|>0$: in this case $f^{\cA\otimes\cB}_F = 0$.
  \item $|F''| = 0$: let $F'' = \{w\}$; then
      $$f^{\cA\otimes\cB}_F = f^\cA_{F'}\otimes \id_{\cB^w} \fc \cA^v\otimes \cB^w \to \cA^{v'}\otimes \cB^w\,,$$
      where $v=\ini F',v'=\ter F'$;
  \item $|F'| = 0$: let $F'=\{v\}$; then
      $$f^{\cA\otimes\cB}_F = \id_{\cA^v} \otimes f^\cB_{F''} \fc \cA^v\otimes \cB^w \to \cA^v \otimes \cB^{w'}\,,$$
      where $w = \ini F''$, $w' = \ter F''$.
\end{enumerate}
The following is obtained by unraveling the definitions:
\begin{prop}
  The tensor product of cubes is a cube. \qed
\end{prop}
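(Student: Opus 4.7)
The plan is to verify the cube relation \eqref{eqn:cube_relation} at every face $F = F' \times F''$ of $\cA \otimes \cB$, where $F' \subset [0,1]^k$ and $F'' \subset [0,1]^l$. First I would note that every decomposition $F = G' \cdot G''$ factors uniquely as $G' = G'_1 \times G'_2$ and $G'' = G''_1 \times G''_2$ with $F' = G'_1 \cdot G''_1$ and $F'' = G'_2 \cdot G''_2$, and that the shuffle sign decomposes as $\sgn(G', G'') = \sgn(G'_1, G''_1)\,\sgn(G'_2, G''_2)\,(-1)^{|G''_1||G'_2|}$, the last factor arising because the $F'$-coordinates precede the $F''$-coordinates in the canonical ordering of $\{1,\ldots,k+l\}$.

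I would then split into three cases. The trivial case $|F| = 0$, with $F = \{(v,w)\}$, amounts to the Leibniz identity $(f^\cA_{\{v\}} \otimes \id + \id \otimes f^\cB_{\{w\}})^2 = 0$, which reduces to $(f^\cA_{\{v\}})^2 = 0$, $(f^\cB_{\{w\}})^2 = 0$, and cancellation of the two cross-terms via graded commutation of the differentials. For the case $|F''| = 0$, $F'' = \{w\}$, with $|F'| > 0$ (the symmetric subcase being analogous), every decomposition takes the form $G' = G'_1 \times \{w\}$, $G'' = G''_1 \times \{w\}$. Decompositions with both $G'_1, G''_1$ positive-dimensional contribute $(-1)^{|G'_1|}\sgn(G'_1,G''_1)(f^\cA_{G''_1} f^\cA_{G'_1}) \otimes \id$, and the remaining two cases (in which $G'_1$ or $G''_1$ is a vertex) split via $f^{\cA \otimes \cB}_{\text{vertex}} = f^\cA_v \otimes \id + \id \otimes f^\cB_{\{w\}}$. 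The $f^\cA \otimes \id$ summands then complete the cube relation for $\cA$ on $F'$, tensored with $\id_{\cB^w}$, which vanishes; the two $\id \otimes f^\cB_{\{w\}}$ summands contribute $\pm f^\cA_{F'} \otimes f^\cB_{\{w\}}$ with opposite signs once the Koszul sign for commuting $\id \otimes f^\cB_{\{w\}}$ past $f^\cA_{F'} \otimes \id$ is accounted for, and hence cancel.

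The central case is $|F'|, |F''| > 0$, where $f^{\cA \otimes \cB}_F = 0$ and the sum must vanish identically. A term $f^{\cA \otimes \cB}_{G''} \circ f^{\cA \otimes \cB}_{G'}$ is nonzero only when neither of $G', G''$ has both of its product-factors positive-dimensional; combined with $|G'_i| + |G''_i| = |F_i| > 0$ for $i=1,2$, this forces exactly two surviving decompositions: $(a)$ $G' = F' \times \{\ini F''\}$, $G'' = \{\ter F'\} \times F''$, and $(b)$ $G' = \{\ini F'\} \times F''$, $G'' = F' \times \{\ter F''\}$. A direct computation yields sign $(-1)^{|F'|}$ and composition $(-1)^{(1-|F'|)(1-|F''|)} f^\cA_{F'} \otimes f^\cB_{F''}$ in case $(a)$, and sign $(-1)^{|F''|(1+|F'|)}$ with composition $f^\cA_{F'} \otimes f^\cB_{F''}$ in case $(b)$; the exponents differ by $1$ modulo $2$, so the two terms cancel.

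The main obstacle will be the sign bookkeeping, which couples two sources: the Koszul signs from composing and tensoring graded maps, and the shuffle signs $\sgn(G', G'')$ in the cube relation. The crucial identity in the central case is that the shuffle sign picks up a factor $(-1)^{|F'||F''|}$ when traversing $F$ in the ``$\cB$-first'' order rather than the ``$\cA$-first'' order, exactly compensating the Koszul sign incurred by commuting $f^\cA_{F'}$ past $f^\cB_{F''}$ through the tensor product.
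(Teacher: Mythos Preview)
Your verification is correct and is precisely what the paper intends: it states the proposition with a \qed and the preamble ``obtained by unraveling the definitions,'' so no proof is given in the paper at all. Your case split and sign computations constitute exactly that unraveling, carried out in full; the factorization of decompositions and of the shuffle sign, the identification of the two surviving terms in the mixed case, and the cancellations all check out.
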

We will also use a more general tensor product corresponding to a $(k,l)$-shuffle $\sigma \in S_{k+l}$, denoted $\cA\otimes_\sigma\cB$. The above case corresponds to the identity shuffle. The vertex modules of the general tensor product are given by
$$(\cA \otimes_\sigma \cB)^{i_1\dots i_{k+l}} = \cA^{i_{\sigma(1)}\dots i_{\sigma(k)}} \otimes \cB^{i_{\sigma(k+1)}\dots i_{\sigma(k+l)}}\,,$$
and the face maps are defined analogously to the case $\sigma=\id$, except we need to renumber coordinates according to $\sigma$. The formulas are exactly the same, but the notation becomes more complicated.

We will need special cases of the tensor product construction: tensoring with a chain complex, the identity map, the diagonal map and the sum map.

\paragraph*{Tensoring with a chain complex} Let $\cC$ be a cube and let $(A,d)$ be a chain complex, that is a $0$-cube. We can then form their tensor product $\cC \otimes A$. Its vertices are $(\cC \otimes A)^v = \cC^v\otimes A$, the differential on such a vertex is the above tensor of the respective differentials, and if $F$ is a positive-dimensional face of $[0,1]^n$, then $f^{\cC\otimes A}_F = f^\cC_F\otimes \id_A$. We can likewise form the tensor product $A \otimes \cC$, which differs from $\cC\otimes A$ by signs of the differentials.

The simplest example is as follows. Let $R$ be viewed as a graded module over itself, concentrated in degree zero, and given the zero differential. Then, using the canonical isomorphism $C \otimes_R R = C = R \otimes_R C$ for an $R$-module $C$, we obtain canonically $\cC \otimes R = \cC = R \otimes \cC$.

Similarly we can consider the module $R \oplus R$ sitting in degree zero, also with the zero differential. In this case, given a cube $\cC$, we have a canonical isomorphism $\cC \oplus \cC\equiv \cC \otimes (R\oplus R)$.

\paragraph*{The identity map} Given an $n$-cube $\cC$ and a direction $i\in\{1,\dots,n+1\}$, the \tb{identity map in the $i$-th direction} is the tensor product $\cC \otimes_{\sigma_i}\id_R$, where $\sigma_i$ is the unique $(n,1)$-shuffle mapping $n+1\mapsto i$, and $\id_R \fc R \to R$ is the identity map, viewed as a $1$-cube. The most common case we will need is when $i=n+1$, in which case $\sigma = \id$. In this case we will denote $\cC \otimes \id_R \equiv \cC \xrightarrow{\id}\cC$. Note that this is a map from $\cC$ to itself in the above sense.

Note that if $F\subset [0,1]^n$ is a face, then the restriction of the identity map $\cC \xrightarrow{\id}\cC$ to the face $F \times [0,1]$ yields the identity map $\cC|_F\xrightarrow{\id}\cC|_F$.

\paragraph*{The diagonal map} Consider the diagonal map $\Delta_R \fc R \to R \oplus R$ as a $1$-cube. If $\cC$ is an $n$-cube, the corresponding \tb{diagonal map} is the $(n+1)$-cube $\cC \otimes \Delta_R$. Computing, we see that it is a map $\cC \xrightarrow{\Delta_\cC}\cC \oplus \cC$, where the only nonzero maps in the $(n+1)$-st direction are the diagonal maps $\cC^v \to \cC^v\oplus \cC^v$.

\paragraph*{The sum map} Consider now the sum map $\Sigma \fc R \oplus R \to R$, again considered as a $1$-cube. Tensoring a cube $\cC$ with it, we obtain, after suitable identifications, the \tb{sum map} $\cC \oplus \cC \xrightarrow{\Sigma_\cC}\cC$, whose only nonzero horizontal maps are sums $\cC^v\oplus \cC^v \to \cC^v$.

\subsection{Cones, cocones}\label{ss:cones_cocones}

In homotopy theory, cones and cocones afford explicit models for homotopy cokernels and homotopy kernels, respectively. Here we review the corresponding notions for cubes, as defined in \cite{varolgunes2018mayer}.

Fix $i \in \{1,\dots,n\}$. Identify $\R^{n-1}$ with the hyperplane $\{x_i=0\} \subset \R^n$, let $\pi \fc \R^n \to \R^{n-1}$ be the map turning the $i$-th coordinate to $0$, and let $\iota_j \fc \R^{n-1}\to\R^n$ be the inclusions $(x_1,\dots,0,\dots,x_n) \mapsto (x_1,\dots,j,\dots,x_n)$ for $j=0,1$. For $\ol F \subset [0,1]^{n-1}\subset \R^{n-1}$ let $F = (\pi|_{[0,1]^n})^{-1}(\ol F)$, $F_j = \iota_j(\ol F)$. Let $\cC$ be an $n$-cube. Its \tb{cone in the $i$-th direction} is the $(n-1)$-cube $\cone_i\cC$, defined as follows. For a vertex $v \in \{0,1\}^{n-1}$ we have
$$(\cone_i\cC)^v = \cC^{\iota_0(v)}[1]\oplus \cC^{\iota_1(v)}\,.$$
For a face $\ol F \subset [0,1]^{n-1}$ the corresponding face map $f^{\cone_i\cC}_{\ol F}$ is given by the triangular matrix
$$\left(\begin{array}{rl} - (-1)^{|F_0|}f^\cC_{F_0} & 0 \\ -(-1)^{\sharp(i,F)}f^\cC_F & f_{F_1} \end{array}\right)\,,$$
where $\sharp(i,F)$ denotes the position of $i$ in the set of free coordinates of $F$ relative to the order induced from $\{1,\dots,n\}$. Note in particular that the differentials are negated in the shifted modules, while those in unshifted ones retain their sign, and that face maps corresponding to edges always come with the positive sign.

The following is obtained from the definitions:
\begin{prop}
  $\cone_i\cC$ is a cube. \qed
\end{prop}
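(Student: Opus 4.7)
The plan is to verify, for each face $\ol F \subset [0,1]^{n-1}$, the cube relation
\[
\sum_{\ol F = \ol F'\cdot \ol F''}(-1)^{|\ol F'|}\sgn(\ol F',\ol F'')\,f^{\cone_i\cC}_{\ol F''}\, f^{\cone_i\cC}_{\ol F'} = 0
\]
by exploiting the $2\times 2$ lower-triangular matrix form of each face map. Since a product of lower-triangular matrices is again lower-triangular, the relation decomposes into three identities, one per nonzero entry of the resulting matrix. A preliminary degree check confirms that the shift by $[1]$ on the top row exactly compensates for the discrepancy $|f^\cC_F| = -|\ol F|$, so that every face map of $\cone_i\cC$ has the correct overall degree $1-|\ol F|$.

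For the top-left entry the two leading minus signs cancel, and $(-1)^{|F_0'|}(-1)^{|F_0''|} = (-1)^{|\ol F|}$ factors out. Since decompositions of $\ol F$ correspond bijectively to decompositions of $F_0 = \iota_0(\ol F)$ inside $[0,1]^n$ with $\sgn$ and $|\cdot|$ preserved, the top-left entry equals $(-1)^{|\ol F|}$ times the cube relation for the subcube $\cC|_{F_0}$, which vanishes because $\cC$ is a cube. The bottom-right entry is handled identically using $\cC|_{F_1}$.

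The main work is the bottom-left entry. Setting $F = \pi^{-1}(\ol F)$, the key combinatorial observation is that decompositions $F = G'\cdot G''$ in $[0,1]^n$ split into two families according to whether the $i$-th free coordinate lies in $G'$ or in $G''$: the first family realizes $G'=F'$, $G''=F_1''$, while the second realizes $G'=F_0'$, $G''=F''$, both indexed by decompositions $\ol F = \ol F'\cdot\ol F''$. Hence the two summands in the matrix product match the two families of terms appearing in the cube relation for $\cC$ at $F$. Matching the signs is then a shuffle computation based on the formula $\sgn(A,B) = (-1)^{\#\{(a,b)\in A\times B\,:\,a>b\}}$ for $A\uplus B = C$, together with $\sharp(i,F')=1+\#\{a\in\ol F' : a<i\}$ and the analogous identity for $F''$.

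The main obstacle is checking that both summands in the bottom-left entry pick up a common overall sign relative to the cube relation for $F$. A parity bookkeeping based on $|\ol F'| \equiv \#\{a\in\ol F' : a<i\} + \#\{a\in\ol F' : a>i\} \pmod 2$ together with the splitting $\#\{c\in\ol F : c<i\} = \#\{c\in\ol F' : c<i\} + \#\{c\in\ol F'' : c<i\}$ shows that both terms acquire the common factor $-(-1)^{\#\{c\in\ol F : c<i\}}(-1)^{|\ol F'|}$, which depends only on $\ol F$. Consequently the bottom-left entry of the cube sum equals this constant times the cube relation for $\cC$ applied to $F$, which vanishes. All three matrix entries being zero, the cube relation holds for $\cone_i\cC$.
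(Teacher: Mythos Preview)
Your approach is correct and is exactly the verification the paper leaves implicit: the paper gives no proof at all, merely asserting that the statement ``is obtained from the definitions'' and marking it $\qed$. Decomposing the cube relation for $\cone_i\cC$ into the three matrix entries, reducing the diagonal entries to the cube relations for $\cC|_{F_0}$ and $\cC|_{F_1}$, and reducing the off-diagonal entry to the cube relation for $\cC$ at the full preimage face $F = \pi^{-1}(\ol F)$ via the bijection between decompositions of $F$ and pairs (decomposition of $\ol F$, choice of which factor carries the $i$-th direction), is precisely the intended argument.

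One small slip: in your final summary you write that both terms ``acquire the common factor $-(-1)^{\#\{c\in\ol F : c<i\}}(-1)^{|\ol F'|}$, which depends only on $\ol F$.'' The factor $(-1)^{|\ol F'|}$ does depend on the decomposition, so this cannot be right as stated. If you carry out the computation you outlined, the ratio between each term of your bottom-left sum and the corresponding term of the cube relation for $F$ (with the prefactors $(-1)^{|G'|}\sgn(G',G'')$ already included on the cube-relation side) is simply $-(-1)^{\#\{c\in\ol F : c<i\}}$, with no extra $(-1)^{|\ol F'|}$. This is a bookkeeping glitch, not a gap: the key point---that the ratio is independent of the decomposition---is exactly what you claimed and exactly what makes the argument work.
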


It also follows that the cone operation behaves well with respect to restrictions: if $F\subset [0,1]^n$ is a face containing the $i$-the direction as one of its free coordinates, then for any $n$-cube $\cC$ we have
$$(\cone_i\cC)|_{\pi(F)} = \cone_{\sharp(i,F)}(\cC|_F)\,.$$

We will need the $k$-th iterated cone in the first direction, $\cone^{\circ k}:=\cone_1\circ\dots\circ \cone_1$, which maps $n$-cubes into $(n-k)$-cubes. 

\begin{defin}
  Let us call an $n$-cube \tb{acyclic} if its $n$-th iterated cone is an acyclic complex.
\end{defin}

We also need to discuss a particular property of cones, also described in \cite{varolgunes2018mayer}. Given a module $A$, we call it \tb{$k$-coniform} if it comes with a decomposition into $2^k$ submodules:
$$A = \bigoplus_{i_1,\dots,i_k=0}^1A^{i_1\dots i_k}\,.$$
Given two such modules $A,B$, we say that a map $f \fc A \to B$ is \tb{$k$-coniform} if the composition
$$A^{i_1\dots i_k} \to A \xrightarrow{f} B \to B^{i_1'\dots,i_k'}$$
vanishes unless $i_j\leq i_j'$ for $j=1,\dots,k$; here the first map is the inclusion while the last one is the projection. Finally, we say that a cube $\cC$ is \tb{$k$-coniform} if so is every vertex module and every face map of $\cC$. The very definition of cones implies the following
\begin{prop}
  The iterated cone operation $\cone^{\circ k}$ establishes a bijective correspondence between $n$-cubes and $k$-coniform $(n-k)$-cubes. In particular, given an $n$-coniform cochain complex $(A,d)$, there is a well-defined $n$-cube $(\cone^{\circ n})^{-1}A$. \qed
\end{prop}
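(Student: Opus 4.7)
The plan is to construct an explicit inverse $\Phi_k$ to $\cone^{\circ k}$ by reading off the cube data directly from the coniform structure. I would first handle $k=1$ and then bootstrap by induction.

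For $k=1$, given a $1$-coniform $(n-1)$-cube $\cD$ (in the first direction), each vertex module splits as $\cD^v = \cD^{v,0}\oplus \cD^{v,1}$ and each face map $f^\cD_{\ol F}$ decomposes, by the $1$-coniformity, into a $2\times 2$ block matrix
\[
f^\cD_{\ol F}=\begin{pmatrix} g_{\ol F}^{00} & 0 \\ g_{\ol F}^{10} & g_{\ol F}^{11}\end{pmatrix}.
\]
I would define $\Phi_1(\cD)$ as the $n$-cube with $\Phi_1(\cD)^{(0,v)} = \cD^{v,0}[-1]$ and $\Phi_1(\cD)^{(1,v)} = \cD^{v,1}$, and face maps obtained by inverting the sign conventions displayed in the definition of $\cone_1$: for a face $F\subset[0,1]^n$ projecting to $\ol F$, set $f_{F_j}^{\Phi_1(\cD)}$ equal to $-(-1)^{|F_0|}g_{\ol F}^{jj}$ when $j\in\{0,1\}$ and the first direction is not free in $F$, and set $f^{\Phi_1(\cD)}_F = -(-1)^{\sharp(1,F)} g^{10}_{\ol F}$ when the first direction is free. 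By construction, $\cone_1\circ\Phi_1$ and $\Phi_1\circ\cone_1$ are the identity on objects of the respective types, and this is a matter of matching the formula that defines $\cone_i$ block-by-block.

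The nontrivial point is that $\Phi_1(\cD)$ satisfies the cube relation \eqref{eqn:cube_relation}. I would verify this by expanding the cube relation for $\cD$ in matrix form: its diagonal blocks are exactly the cube relations for the two ``layers'' of $\Phi_1(\cD)$ (the restrictions to the hyperplanes $\{x_1=0\}$ and $\{x_1=1\}$), while its off-diagonal block is the cube relation for faces of $\Phi_1(\cD)$ containing the first direction. The signs match because the shuffle sign $\sgn(F',F'')$ picks up exactly $(-1)^{\sharp(1,F)}$ whenever the first coordinate is transferred between $F'$ and $F''$; this is the combinatorial identity that needs checking and I expect it to be the main bookkeeping obstacle. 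Once this is established, $\Phi_1$ and $\cone_1$ are mutually inverse bijections between $n$-cubes and $1$-coniform $(n-1)$-cubes.

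For general $k$, I would argue by induction. The key observation is that $\cone_1$ interacts well with coniformity in the remaining directions: if $\cC$ is $m$-coniform in directions $2,\ldots,m+1$, then $\cone_1\cC$ is naturally $(m+1)$-coniform in its first $m+1$ coordinates, with the new $\{0,1\}$-splitting coming from the cone summands. Composing $k$ such steps produces a $k$-coniform $(n-k)$-cube, and composing the inverse $\Phi_1$ of each step yields the inverse $\Phi_k$ of $\cone^{\circ k}$. The last sentence of the statement, concerning $k$-coniform cochain complexes when $n=k$, is the special case $n-k=0$: there are no face maps in nonzero dimensions, and $(\cone^{\circ k})^{-1}A$ is simply the $k$-cube obtained by unpacking the $2^k$ summands of $A$ together with the components of its differential. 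The main difficulty throughout is sign tracking, concentrated in the single verification at $k=1$; everything else is formal.
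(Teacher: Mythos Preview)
Your approach is correct and is precisely what the paper has in mind: the paper offers no argument beyond ``The very definition of cones implies the following'' and a \qedsymbol, so your explicit construction of $\Phi_1$ and induction on $k$ is exactly the unpacking that the paper leaves to the reader. One small slip: in the paper's cone matrix the $(2,2)$-block is $f^{\cC}_{F_1}$ with no sign, so your formula $f^{\Phi_1(\cD)}_{F_j}=-(-1)^{|F_0|}g^{jj}_{\ol F}$ is only correct for $j=0$; for $j=1$ one should take $f^{\Phi_1(\cD)}_{F_1}=g^{11}_{\ol F}$, but this is just the sign bookkeeping you already flagged as the main issue.
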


If $\cC$ is a cube, we let $\cC[k]$ be the cube whose vertex modules have been shifted by $k$ in degree, and whose structure maps have all been multiplied by $(-1)^k$. It is trivially a cube. We define the \tb{cocone of $\cC$ in the $i$-th direction} by
$$\co_i\cC:=\cone_i\cC[-1]\,.$$
Combining the fact that shifts commute with passage to subcubes, we see that cocones are also compatible with it, namely if $F\subset [0,1]^n$ is a face containing the $i$-th direction as one of its free coordinates, then
$$(\co_i\cC)|_{\pi(F)} = \co_{\sharp(i,F)}(\cC|_F)\,.$$

\begin{rem}\label{rem:cocones_htpy_ker_square}
  Let $f \fc A \to B$ be a cochain map. As we mentioned, its cocone $\co (f)$ is an explicit model of its homotopy kernel. In particular by the defining property of homotopy kernels, there is a natural map $\co(f) \to A$ whose composition with $f$ is nullhomotopic. Explicitly, this map $\co(f) = A\oplus B[-1] \to A$ is simply the projection to $A$. An explicit nullhomotopy is given by $h \fc \co(f) \to B$, $h(a,b) = -b$. This can be expressed in the form of the following $2$-cube:
  $$\xymatrix{\co(f) \ar@{=}[r] \ar[d] \ar[rd]^{h} & \co(f) \ar[d]^{f\circ\pr} \\ 0 \ar[r] & B}$$
\end{rem}

\begin{rem}\label{rem:cocones_turn_ids_minus_ids} This is a rather lengthy remark, relevant to the constructions described in Sections \ref{ss:constructing_product}, \ref{ss:SH_pair_well_defined}.
  Although a cocone of a cube is again a cube, it has some peculiar properties. For instance if $\cC$ is a cube which is the identity map in a direction $i$, its cocone in any other direction is rather \emph{minus} the identity map. This is because any cone in a direction other than $i$ is the identity, and in the cocone all the maps are negated. The main consequence for us is that when trying to define the symplectic cohomology of a pair of subsets, the comparison maps between the various homologies, coming from cocones of certain maps, need to be negated in order to form a direct system whose direct limit we need to take. The reader is invited to refer back to this remark when this happens.

  In practice, this takes the following form. Varolg\"une\c s defines special kinds of cubes: triangles and slits. An $n$-triangle is an $n$-cube of the form
  $$\xymatrix{\cA \ar[r]\ar[d]_{\id} \ar[rd] & \cB \ar[d] \\ \cA \ar[r]& \cC}$$
  where the diagram directions are the last two ones, so that $\cA,\cB,\cC$ are the corresponding subcubes of dimension $n-2$. If we take its cocone in any direction other than the last two ones, we obtain a cube of the form
  $$\xymatrix{\co_i \cA \ar[r]\ar[d]_{-\id} \ar[rd] & \co_i\cB \ar[d]\\ \co_i\cA \ar[r]& \co_i\cC}$$
  which is not a triangle \emph{because of the minus sign} in $-\id$. We will only need to use the case when this last cube is actually a square:
  $$\xymatrix{A \ar[r]^f\ar[d]_{-\id} \ar[rd]^h & B \ar[d]^g\\ A \ar[r]_k& C}$$
  This cube simply means that $g\circ f$ and $-k$ are homotopic. Another way of saying this is that $(-g)\circ(-f)$ and $-k$ are homotopic, that is we have the following square:
  $$\xymatrix{A \ar[r]^{-f}\ar[d]_{\id} \ar[rd]^h & B \ar[d]^{-g}\\ A \ar[r]_{-k}& C}$$
  which \emph{is} in fact a $2$-triangle. Thus we see that if we have a $3$-triangle and we take its cocone in the first direction, we obtain a square which can be modified as above to obtain a triangle, which is equivalent to a homotopy commutative triangle involving maps which are negated.

  A similar remark applies to slits. An $n$-slit is an $n$-cube of the form
  $$\xymatrix{\cA \ar[r]\ar[d]_{\id} \ar[rd] & \cB \ar[d]^{\id} \\ \cA \ar[r]& \cB}$$
  It can be thought of as two maps $\cA\to\cB$ and a homotopy between them. If we have a $3$-slit like this, taking its cocone in the first direction results in a square of the form
  $$\xymatrix{A \ar[r]^{f}\ar[d]_{-\id} \ar[rd]^h & B \ar[d]^{-\id}\\ A \ar[r]_{k}& B}$$
  or, equivalently, a square of the form
  $$\xymatrix{A \ar[r]^{-f}\ar[d]_{\id} \ar[rd]^h & B \ar[d]^{\id}\\ A \ar[r]_{-k}& B}$$
  which \emph{is} a $2$-slit. These squares express the fact that $f,k\fc A \to B$ are homotopic maps, which is all we need.
\end{rem}

\subsection{Compositions of maps of cubes}\label{ss:compositions_maps_cubes}

In the construction of the product on the relative symplectic cohomology of pairs in Section \ref{ss:constructing_product} we'll need to use compositions of maps of cubes. Recall that a map between two $n$-cubes is simply an $(n+1)$-cube whose restrictions to the corresponding $n$-faces are the given $n$-cubes.

Let $\cA\xrightarrow{\cF}\cB\xrightarrow{\cG}\cC$ be two maps of $n$-cubes. We will define their composition $\cA \xrightarrow{\cG\circ\cF}\cC$ as follows. Taking the $n$-th iterated cone in the first direction, we arrive at a sequence of chain complexes and chain maps
$$\cone^{\circ n} \cA \xrightarrow{\cone^{\circ n}\cF}\cone^{\circ n}\cB\xrightarrow{\cone^{\circ n}\cG}\cone^{\circ n}\cC\,.$$
The chain maps are $n$-coniform, as is their composition $\cone^{\circ n}\cG\circ\cone^{\circ n}\cF$, therefore we can apply $(\cone^{\circ n})^{-1}$ to the chain map
$$\cone^{\circ n}\cA\xrightarrow{\cone^{\circ n}\cG\circ\cone^{\circ n}\cF}\cone^{\circ n}\cC\,,$$
and the result is the desired composition
$$\cA \xrightarrow{\cG\circ\cF}\cC\,.$$

The following material will be relevant in Section \ref{ss:SH_pair_well_defined}, where we prove that the relative symplectic cohomology of a pair is well-defined, as well as in Section \ref{ss:constructing_product}, where we construct the product.

If $F \subset [0,1]^n$ is a face, we let $\wh F:= F \times [0,1] \subset [0,1]^{n+1}$. The main property of compositions we'll need is that the $1$-dimensional edges in the direction of the composition simply compose and they acquire no signs. That is if $v$ is a vertex, then
$$f^{\cG\circ\cF}_{\wh {\{v\}}} = f^{\cG}_{\wh {\{v\}}}\circ f^{\cF}_{\wh {\{v\}}}\,,$$

Let us call a map of $n$-cubes $\cA \xrightarrow{\cF}\cB$ \tb{straight} if, whenever $v,v' \in \{0,1\}^n$ are vertices and $F\fc v \to v'$ is the corresponding face, the map $f^\cF_{\wh F}$ vanishes unless $v=v'$. Examples of straight maps include the identity, diagonal, and the sum map on a given cube, and more generally the tensor product of any cube with a $1$-cube.

Another property of straight maps we'll use in the sequel is as follows. Assume that $\cA \xrightarrow{\cF}\cB$ is a straight map between two $n$-cubes. It follows that $\cF$ is completely determined by the structure maps of $\cA,\cB$ and the maps $\wh f^\cF_v:=f^\cF_{\wh {\{v\}}} \fc \cA^v \to \cB^v$ for the vertices $v$ of $[0,1]^n$. If we are given two cubes $\cA,\cB$ and a collection of maps $\wh f_v \fc \cA^v \to \cB^v$, they are the structure maps of a straight map $\cA \to \cB$ if and only if for every face of $[0,1]^{n+1}$ of the form $\wh F$, for $F \subset [0,1]^n$, we have
$$f^\cB_F \circ \wh f_{\ini F} = \wh f_{\ter F}\circ f^\cA_F\,.$$
We will apply this fact in the following form. Let $\cA\xrightarrow{\cF}\cB$ be a map of $n$-cubes and let $\wh\cF = \cone_{n+1}\cF$ be the corresponding cone, which is an $n$-cube. We claim that there are natural straight maps
$$\cB \xrightarrow{\iota}\wh\cF \xrightarrow{\pi}\cA[1]\,.$$
These are defined as follows. Let $v \in \{0,1\}^n$ be a vertex. Then the defining morphisms of the straight maps $\iota,\pi$ are as follows:
$$\iota_v \fc \cB^v \to \wh\cF^v = \cA^v[1]\oplus \cB^v\quad\text{is the inclusion map}\,,$$
$$\pi_v \fc \wh\cF^v = \cA^v[1]\oplus \cB^v \to \cA^v[1]\quad\text{is }(-1)^{\|v\|_1}\cdot\text{the projection map}\,,$$
where $\|v\|_1$ is the $\ell_1$-norm of $v$, that is the number of coordinates of $v$ equal to $1$.

Shifting everything by $-1$, we see that we also have natural straight maps
\begin{equation*}
\cB[-1] \xrightarrow{\iota[-1]}\wh\cF[-1]=\co_{n+1}\cF \xrightarrow{\pi[-1]}\cA\,,
\end{equation*}
which we'll use in the definition of relative symplectic cohomology of pairs. We'll also need another feature of this sequence, namely its exactness. Let's call a sequence
$$0\to\cA\xrightarrow{\cF}\cB\xrightarrow{\cG}\cC\to 0$$
of maps between $n$-cubes \tb{exact} if the following sequence is:
$$0\to\cone^{\circ n}\cA\xrightarrow{\cone^{\circ n}\cF}\cone^{\circ n}\cB\xrightarrow{\cone^{\circ n}\cG}\cone^{\circ n}\cC\to 0\,.$$
We claim that
\begin{equation}\label{eqn:exact_sequence_cocone}
0\to \cB[-1] \xrightarrow{\iota[-1]}\co_{n+1}\cF \xrightarrow{\pi[-1]}\cA\to 0
\end{equation}
is exact. In fact, since the maps here are straight, applying $\cone^{\circ n}$ to the sequence results in
$$0\to\bigoplus_{v}\cB^v[-1] \to \bigoplus_v (\cA^v\oplus \cB^v) \to \bigoplus_v \cA^v\to 0\,,$$
where the horizontal maps are the direct sums of the components of $\iota[-1]$ and $\pi$, and this sequence is clearly exact, whence the claim. In particular if both $\cA,\cB$ are acyclic, then so is $\co_{n+1}\cF$.

\subsection{Folding and pullbacks of V-shaped diagrams}\label{ss:folding_pullbacks}

When defining the product on the relative symplectic cohomology of a pair in Section \ref{ss:constructing_product}, we'll need to use the technique of folding a cube and then taking its cocone. Here we describe this technique.

Consider an $(n+2)$-cube, $n\geq 0$, of the form
\begin{equation}\label{eqn:diagram_of_cubes_folding}
  \xymatrix{\cA \ar[r]^{\cF} \ar[d]_{\cG} \ar[rd]^{\cH} & \cB \ar[d]^{\cI} \\ \cC \ar[r]_{\cK} & \cD}
\end{equation}

Here $\cA,\dots,\cD$ are $n$-cubes, and $\cF,\cG,\cI,\cK$ are maps of $n$-cubes, that is $(n+1)$-cubes where the last direction is the one marked with the corresponding letter; $\cH$ stands for all the maps running from vertices of $\cA$ to those of $\cD$. The horizontal and the vertical directions have numbers $n+1,n+2$, respectively. We define the corresponding \tb{folded cube} to be as follows:
$$\xymatrix{\cA \ar[r]^-{(\cF, -\cG)} \ar[d] \ar[rd]^{\cH} & \cB\oplus \cC \ar[d]^{\cI+\cK} \\ 0 \ar[r] & \cD}$$
Here $\cA \xrightarrow{(\cF, -\cG)}\cB\oplus \cC$ is the composition of the diagonal map $\cA \to \cA \oplus \cA$ and the direct sum $\cF\oplus(-\cG) \fc \cA \oplus \cA \to \cB \oplus \cC$, while $\cI+\cK$ is the composition of the direct sum $\cI \oplus \cK\fc \cB \oplus \cC \to \cD\oplus \cD$ with the sum map $\cD \oplus \cD \to \cD$. It is a matter of routine verification that this is indeed a cube. Note that folding commutes with passage to subcubes in the obvious sense.

A \tb{V-shaped diagram} is a partial cube of the form
$$\xymatrix{ & \cB\ar[d]^{\cI} \\ \cC\ar[r]_{\cK} & \cD}$$
We define its \tb{pullback} to be the $n$-cube $\cQ = \co_{n+1}(\cB\oplus \cC\xrightarrow{\cI+\cK}\cD)$.

If we have a cube of the form \eqref{eqn:diagram_of_cubes_folding}, we can fold it and then apply cocone in the last ($(n+2)$-nd) direction. We then obtain an $(n+1)$-cube of the form
$$\co_{n+1}(\cA\to 0)\to\cQ = \co_{n+1}(\cB\oplus \cC \xrightarrow{\cI + \cK}\cD)\,,$$
which is the point of the current section.\footnote{Note that even though the vertex modules of $\co_{n+1}(\cA \to 0)$ are all canonically isomorphic to those of $\cA$, the structure maps gain signs which depend on the dimension of the corresponding face (in fact if $F\subset [0,1]^n$ is a face, $f^{\co_{n+1}(\cA\to 0)}_F = (-1)^{|F|}f^\cA_F$).}

\subsection{Rays and telescopes}\label{ss:rays_telescopes}

For $n\geq 0$, an \tb{$(n+1)$-ray} is a diagram of the form
$$\cR = \cA_1\xrightarrow{\cF_1}\cA_2\xrightarrow{\cF_2}\cA_3\to\dots$$
consisting of $n$-cubes $\cA_1,\cA_2,\dots$ and maps between them $\cF_1,\cF_2,\dots$ If $F \subset [0,1]^n$ is a face, such a ray defines a restriction to $F$, which is the $(|F|+1)$-ray
$$\cR|_F = \cA_1|_F\xrightarrow{\cF_1|_{\wh F}}\cA_2|_F\to\dots$$

Here we will define the \tb{telescope} $\tel \cR$ of such a diagram, which is an $n$-cube. It will have the property that for any face $F \subset [0,1]^n$ we have
\begin{equation}\label{eqn:telescope_commutes_subcubes}
  \tel (\cR|_F) = (\tel \cR)|_F\,,
\end{equation}
which is crucial in the applications of telescopes below.
\begin{rem}
  To motivate the definition of telescopes, recall that given modules $A_i$, $i \in \N$, and module maps $f_i \fc A_i \to A_{i+1}$, the corresponding direct limit $\varinjlim_i A_i$ can be taken as the cokernel of the map
  $$\id - f \fc \bigoplus_i A_i \to \bigoplus_i A_i\,,\quad \text{where}\quad f(a_1,a_2,\dots,)=(0,f_1(a_1),\dots)\,.$$
  In this paper we are working with homotopical constructions, therefore we need the analog of the direct limit in homotopy theory, also known as the \emph{homotopy colimit}. The telescope of a $1$-ray is a model for it. Since the above direct limit is the cokernel of a map, it is expected that the corresponding homotopy colimit is given by the homotopy cokernel of a map, or in other words, by its cone.
\end{rem}

Consider the map of $n$-cubes
$$\bigoplus_{i=1}^\infty\cA_i \xrightarrow{\cF}\bigoplus_{i=1}^\infty\cA_i\,,$$
symbolically defined as $\cF=\id-\bigoplus_i\cF_i$, where $\bigoplus_i\cF_i$ is explicitly given as follows. First, both the domain and the target cubes have their own structure maps given by the direct sums of the structure maps of the $\cA_i$. Now, given a vertex $v \in \{0,1\}^n$, both the domain and target have the corresponding vertex module
$$\left(\bigoplus_{i=1}^\infty\cA_i\right)^v = \bigoplus_{i=1}^\infty\cA_i^v\,.$$
Given vertices $v,v' \in \{0,1\}^n$ spanning a face $F$ with $v = \ini F$, $v'=\ter F$, the corresponding face map
$$f^{\bigoplus_i\cF_i}_{\wh F} \fc \bigoplus_{i=1}^\infty\cA_i^v \to \bigoplus_{i=1}^\infty\cA_i^{v'}$$ has matrix representation
$$\left(\begin{array}{rrrrr}0 & 0 & 0 & 0 & \dots \\ f^{\cF_1}_{\wh F} & 0 & 0 & 0 & \dots \\ 0 & f^{\cF_2}_{\wh F} & 0 & 0 & \dots \\ &&\dots&&\end{array}\right)\,.$$

We can now define the telescope of $\cR$:
$$\tel \cR := \cone_{n+1} \cF\,.$$

It can be shown that the telescope of a subray is the corresponding subcube of the telescope, as claimed in equation \eqref{eqn:telescope_commutes_subcubes}. Another feature is that if we have an $(n+2)$-ray consisting of the identity maps between $n$-cubes as follows:
$$\xymatrix{\cA_1 \ar[r]^{\cF_1} \ar[d]^{\id} & \cA_2 \ar[r]^{\cF_2} \ar[d]^{\id} & \cA_3 \ar[r] \ar[d]^{\id} & \dots \\ \cA_1 \ar[r]^{\cF_1}& \cA_2 \ar[r]^{\cF_2}& \cA_3 \ar[r] & \dots }$$
then its telescope is
$$\tel \cR \xrightarrow{\id} \tel \cR\,,$$
where $\cR = \cA_1\to \cA_2\to\dots$

Another crucial property of telescopes is their behavior relative to tensor products. If $\cR = A_1\xrightarrow{f_1} A_2\to\dots$ and $\cR' = A_1'\xrightarrow{f_1'} A_2' \to \dots$ are $1$-rays, their tensor product is defined to be the $1$-ray $\cR \otimes \cR' = A_1\otimes A_1' \xrightarrow{f_1\otimes f_1'} A_2 \otimes A_2' \to \dots$.
There is a canonical quasi-isomorphism $\tel (\cR\otimes \cR') \to \tel \cR \otimes \tel\cR'$, see
\cite{varolgunes2018mayer} and \cite[Lemma 0.1]{greenlees1992derived}. Moreover, the induced map on completions, $\wh{\tel (\cR \otimes \cR')} \to \wh{\tel \cR \otimes \tel\cR'}$, is also a quasi-isomorphism, provided all the modules in sight are free \cite[Corollary 2.3.6]{varolgunes2018mayer}. Assume now that there are two $2$-rays $\cT,\cT'$:
$$\xymatrix{A_1 \ar[r] \ar[d] \ar[rd] & A_2 \ar[r] \ar[d] & \dots \\ B_1 \ar[r] & B_2 \ar[r] & \dots} \qquad \qquad \xymatrix{A_1' \ar[r] \ar[d] \ar[rd] & A_2' \ar[r] \ar[d] & \dots \\ B_1' \ar[r] & B_2' \ar[r] & \dots}$$
and let $\cA,\cB,\cA'\,\cB'$ be the $1$-rays comprised of the $A_i$, $B_i$, $A_i'$, $B_i'$, respectively. We obtain a natural $3$-ray whose constituent $1$-rays are the tensor products $\cA\otimes \cA'$, $\cA\otimes \cB'$, $\cB\otimes\cA'$, and $\cB\otimes \cB'$. Taking the telescope, we obtain the square
\begin{equation}\label{eqn:telescope_of_tensor_product}
{\xymatrix{\tel(\cA\otimes \cA')\ar[r] \ar[d] \ar[rd] & \tel(\cA\otimes \cB') \ar[d] \\ \tel(\cB\otimes\cA') \ar[r] & \tel(\cB\otimes \cB')}}
\end{equation}
On the other hand, we have the tensor product of the cochain maps $\tel \cT = \tel\cA \to \tel \cB$, $\tel \cT' = \tel\cA' \to \tel \cB'$, that is the square
\begin{equation}\label{eqn:tensor_products_telescopes}
\xymatrix{\tel\cA\otimes \tel\cA'\ar[r] \ar[d] \ar[rd]& \tel\cA\otimes \tel\cB' \ar[d] \\ \tel\cB\otimes\tel\cA' \ar[r] & \tel\cB\otimes \tel\cB'}
\end{equation}
The point is that the above quasi-isomorphism extends to a map of cubes from the square \eqref{eqn:telescope_of_tensor_product} to the square \eqref{eqn:tensor_products_telescopes}, in the sense that each edge map going in the direction of the cube map is a quasi-isomorphism.

\subsection{Floer data, rays, and symplectic cohomology of pairs}\label{ss:Floer_data_rays}

Here we discuss the notions of Floer data, acceleration data, the resulting rays of Floer complexes, and show how these are used to define the symplectic cohomology of a pair. Also we recall Varolg\"une\c s's notion of descent. Throughout this section $(M,\omega)$ is a fixed closed symplectic manifold.

\subsubsection{Hamiltonian rays}

In \cite{varolgunes2018mayer}, Varolg\"une\c s defined monotone cubes, triangles, and slits of Hamiltonians. Let us recall what this means. Consider a strictly increasing Morse function $\rho_1 \fc [0,1]\to\R$ with exactly two critical points at $0,1$. It gives rise to the Morse function $\rho_n \fc [0,1]^n \to \R$ by $\rho_n(x)=\rho_1(x_1)+\dots+\rho_1(x_n)$. We also endow the cube with the standard Riemannian metric. A \tb{monotone cube} of Hamiltonians is a suitably smooth map $[0,1]^n\to C^\infty(M)$, which is monotone nondecreasing along the gradient lines of $\rho_n$; Varolg\"une\c s's definition in particular assumes that \emph{the Hamiltonians at the vertices of such a shape are nondegenerate}, and that they are also constant on a neighborhood of each vertex. We will impose these assumptions throughout. Similarly monotone triangles and slits are defined as smooth maps into $C^\infty(M)$ defined on subsets of $[0,1]^n$ called triangles and slits, see \emph{ibid.}
\begin{defin}
  An \tb{$n$-ray of Hamiltonians} or a \tb{Hamiltonian $n$-ray} is a sequence $\cH_i$, $i\in\N$, of monotone $n$-cubes of Hamiltonians, such that the face of $\cH_i$ corresponding to $\{x_n=1\}$ coincides with the face of $\cH_{i+1}$ corresponding to $\{x_n=0\}$. Similarly we define \tb{triangular $n$-rays} and \tb{slit-like $n$-rays} of Hamiltonians: a triangular $n$-ray of Hamiltonians, defined for $n\geq 3$, is a sequence of monotone $n$-triangles of Hamiltonians which agree along faces as above. A slit-like $n$-ray of Hamiltonians, defined for $n\geq 3$, is likewise a sequence of monotone $n$-slits of Hamiltonians which agree along the appropriate faces.
\end{defin}

\begin{rem}
  Such configurations will give rise to rays, triangular and slit-like rays in the algebraic sense, as we will describe below. To associate such an algebraic object to a configuration of Hamiltonians, additional structure is needed, such as choices of almost complex structures and Pardon data, depending on the level of generality. A suitable choice of such structures always exists given a configuration of Hamiltonians, therefore we will suppress such choices both from notation and from the discussion below.
\end{rem}

Varolg\"une\c s proves the following result, itself a consequence of Pardon's constructions \cite{pardon2016algebraic,pardon2019contact}:
\begin{thm}
  Given a monotone $n$-cube $\cH$ of Hamiltonians and a suitable choice of additional data such as almost complex structures or Pardon data, there is an $n$-cube whose vertices are the Floer complexes of the Hamiltonians at the vertices of $\cH$, and whose higher maps are obtained by counting elements of suitable moduli spaces of parametrized Floer equations. Similarly, a monotone $n$-triangle gives rise to an $n$-triangle of Floer complexes, and a monotone $n$-slit yields an $n$-slit. \qed
\end{thm}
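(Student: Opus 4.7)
The plan is to follow the standard strategy of parametrized Floer theory, implemented in the PSS/Seidel/Pardon framework: to each face $F\subset [0,1]^n$ we associate a moduli space of Floer cylinders whose Floer data depend on a parameter running over $F$, and we define $f^\cC_F$ by counting the rigid elements weighted by $T^{E}$ where $E$ is the topological energy. First, at each vertex $v$ we pick a time-dependent almost complex structure and use it together with the nondegenerate Hamiltonian $\cH(v)$ to define the Floer differential $f^\cC_{\{v\}}$ on $CF^*(\cH(v))$ in the standard way; the monotonicity of $\cH$ near $v$ (where it is locally constant by assumption) ensures this is well-defined. For a higher-dimensional face $F$ with $v=\ini F$, $v'=\ter F$, we choose a monotone family of Floer data parametrized by $F$ which restricts, on each subface, to the data already chosen, together with an asymptotic collar near $\partial F$ (so parameters are constant near the boundary). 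For each pair of orbits $x \in \cP^\circ(\cH(v))$, $y\in \cP^\circ(\cH(v'))$ we form the moduli space of pairs $(r,u)$ with $r\in F$ and $u\fc \R\times S^1 \to M$ solving the parametrized Floer equation for the data at $r$, with asymptotes $x,y$.

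Next I would invoke Pardon's virtual fundamental chain machinery \cite{pardon2016algebraic,pardon2019contact} to produce coherent virtual counts of the zero-dimensional components of these moduli spaces, thereby defining $f^\cC_F$ of degree $1-|F|$ (the standard shift: one free $\R$-translation absorbs one dimension, and each parameter from $F$ absorbs another). The coefficient is $T^{E(u)}$ where $E(u) = \int u^*\omega + \text{Hamiltonian terms}$, which is nonnegative thanks to the monotonicity assumption; hence the face maps land in $CF^*$ defined over $\Lambda_{\geq 0}$ as in \eqref{eqn:defin_Floer_cx}. Compatibility of the Pardon data along boundary strata of the parameter space and of the moduli spaces is arranged inductively in $|F|$, extending the data already chosen on subfaces.

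The cube relation \eqref{eqn:cube_relation} is then forced by analyzing the boundary of the one-dimensional components of the parametrized moduli space associated with a face $F$. This boundary decomposes into two kinds of strata: Floer-breakings at the asymptotic ends (contributing the compositions $f^\cC_{\{v'\}}\circ f^\cC_F$ and $f^\cC_F\circ f^\cC_{\{v\}}$) and codimension-one boundary strata of the parameter space $F$, which have the form $F'\cdot F''$ ranging over all factorizations $F = F'\cdot F''$ and contribute $f^\cC_{F''}\circ f^\cC_{F'}$. Standard gluing identifies the total boundary with the left-hand side of \eqref{eqn:cube_relation}. The main obstacle is verifying that the orientation sign $(-1)^{|F'|}\sgn(F',F'')$ prescribed by the cube formalism matches the signs produced by Floer-theoretic gluing. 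This comes down to a careful bookkeeping of the orientation of $F$ as an ordered product $F'\times F''$ versus the product ordering $\sigma_{S',S''}$ of the free coordinates inherited from $\{1,\dots,n\}$, together with the usual Koszul sign arising from the $\R$-translation factor; these combine to the sign appearing in the definition.

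For triangular and slit-like rays the construction is formally identical: the parameter space is no longer a cube but a triangle or slit (a subset of $[0,1]^n$), the Floer data are chosen monotone on this shape, and one again counts rigid parametrized Floer cylinders. The boundary analysis yields the analogs of \eqref{eqn:cube_relation} for triangles and slits. Passing between rays, triangles, and slits uses the shape-compatibility conditions imposed in the definition (agreement of the Hamiltonians along the common face), which translates into agreement of the chosen Pardon data and hence of the corresponding face maps; this makes the assignment functorial with respect to the shape inclusions we will need in Sections \ref{ss:constructing_product}--\ref{ss:SH_pair_well_defined}.
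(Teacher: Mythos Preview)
The paper does not prove this theorem; it is stated with a \qed\ and attributed to Varolgunes, who in turn relies on Pardon's virtual fundamental chain package. So there is no ``paper's own proof'' to compare against beyond the citation.

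Your sketch is in the right spirit and matches the strategy that Varolgunes carries out: parametrized continuation moduli spaces attached to faces, Pardon data chosen inductively over the face lattice, and a codimension-one boundary analysis producing the cube relation. One technical point worth tightening: the parameter space attached to a positive-dimensional face $F$ is not $F$ itself with a fixed Hamiltonian at each $r\in F$, but rather the space of gradient flow lines of $\rho_n$ in $F$ from $\ini F$ to $\ter F$ (this is $(|F|-1)$-dimensional after dividing by reparametrization). Each such flow line determines an $s$-dependent Hamiltonian, and the associated continuation equation has no residual $\R$-translation symmetry. This is what makes the degree come out to $1-|F|$ and what makes the boundary strata correspond exactly to concatenations $F'\cdot F''$ via broken flow lines. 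Your phrase ``one free $\R$-translation absorbs one dimension'' applies only to the vertex maps (the differentials), not to the higher face maps. With that adjustment your outline is essentially the argument in \cite{varolgunes2018mayer}.
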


Since an $n$-ray of Hamiltonians consists of monotone $n$-cubes of Hamiltonians glued along the $n$-th direction, we have the following
\begin{coroll}
  An $n$-ray of Hamiltonians $\cH$ gives rise to an $n$-ray whose vertices are the Floer complexes of the Hamiltonians at the vertices of $\cH$. Similarly, a triangular or a slit-like $n$-ray of Hamiltonians gives rise to a triangular or a slit-like $n$-ray of Floer complexes, respectively. \qed
\end{coroll}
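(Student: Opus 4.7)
The plan is to observe that the Corollary is essentially a routine gluing consequence of the preceding Theorem, with the only real content being a compatibility of auxiliary choices along shared faces. The preceding Theorem tells us that, given a monotone $n$-cube of Hamiltonians together with suitable auxiliary data (almost complex structures and Pardon virtual fundamental chain data), one obtains an algebraic $n$-cube of Floer complexes. An $n$-ray of Hamiltonians $\cH = (\cH_i)_{i \in \N}$ is by definition a sequence of monotone $n$-cubes glued along direction $n$, in the sense that $\cH_i|_{\{x_n = 1\}} = \cH_{i+1}|_{\{x_n = 0\}}$ as $(n-1)$-cubes of Hamiltonians.

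First I would choose the auxiliary data inductively in $i$. Pick almost complex structures and Pardon data on $\cH_1$ freely; then, assuming such data has been chosen on $\cH_1,\dots,\cH_i$, extend it to $\cH_{i+1}$ subject only to the constraint that on the shared face $\{x_n=0\}$ of $\cH_{i+1}$ one recovers the data already chosen on $\{x_n=1\}$ of $\cH_i$. The space of admissible data restricted to any face is nonempty and contractible (this is the standard genericity/transversality setup used throughout Floer theory and in particular by Pardon), so such an extension always exists. Applying the Theorem to each $\cH_i$ with these compatible choices yields an algebraic $n$-cube $\cC_i$ of Floer complexes.

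Because the restriction of $\cC_i$ to a face depends only on the Hamiltonian and auxiliary data on that face, our compatibility condition guarantees
\[
\cC_i|_{\{x_n=1\}} = \cC_{i+1}|_{\{x_n=0\}}
\]
as $(n-1)$-cubes. Reinterpreting $\cC_i$ as an $n$-cube arrow from its $\{x_n=0\}$-face to its $\{x_n=1\}$-face, the sequence $(\cC_i)_i$ assembles into an algebraic $n$-ray in the sense of Section \ref{ss:rays_telescopes}, whose vertex $(n-1)$-cubes are precisely the Floer cubes associated to the common faces and whose arrow $n$-cubes are the $\cC_i$ themselves.

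The triangular and slit-like cases are handled by the verbatim same argument: by the Theorem each monotone $n$-triangle (respectively, $n$-slit) of Hamiltonians produces an algebraic $n$-triangle (respectively, $n$-slit) of Floer complexes once auxiliary data is chosen, and one makes an inductive compatible choice along the shared $(n-1)$-dimensional faces dictated by the ray structure. The only potential obstacle is precisely this inductive extension of auxiliary data with prescribed boundary behavior, which however is routine given the contractibility of the relevant spaces of almost complex structures and Pardon data; no further analytic input beyond what is already packaged in the cited Theorem is required.
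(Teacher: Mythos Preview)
Your argument is correct and matches the paper's approach exactly: the paper treats this as an immediate consequence of the preceding Theorem (marking it with \qed\ and prefacing it only with the observation that an $n$-ray of Hamiltonians consists of monotone $n$-cubes glued along the $n$-th direction). You have simply spelled out the compatibility of auxiliary choices along shared faces that the paper's earlier Remark declares always exists and therefore suppresses.
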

\noindent See Remark \ref{rem:cocones_turn_ids_minus_ids} and \cite{varolgunes2018mayer} for the definition of triangular and slit-like cubes. This corollary is applied as follows: we take the telescope of such an $n$-ray to obtain an $(n-1)$-cube, to which we then apply the completion functor.

\subsubsection{Weighted Floer complexes}\label{sss:weighted_HF}

Here we specify the Floer complexes we will be using. If $H \in C^\infty(M\times S^1)$ is a non-degenerate Hamiltonian, its Floer complex, generated by the set $\cP^\circ(H)$ of its contractible $1$-periodic orbits, was defined as $CF^*(H) = \bigoplus_{x\in\cP^\circ(H)}\Lambda_{\geq0}\cdot x$ in equation \eqref{eqn:defin_Floer_cx}. This complex is graded over $\Z_{2N_M}$ by the Conley--Zehnder index. The differential is determined by its matrix elements, given by
$$\langle dx,y\rangle = \sum_{A \in \pi_2(M,x,y)}\#\cM(H;x,y;A)\,T^{E(A)}\,,$$
where $\pi_2(M,x,y)$ is the set of homotopy classes of smooth maps of the cylinder $\R\times S^1$ to $M$ which are asymptotic at $\pm\infty$ to $x,y$, $\cM(H;x,y;A)$ stands for the moduli space of unparametrized Floer trajectories corresponding to $H$, running from $x$ to $y$, and representing the class $A$, such that every solution has index $1$; $\#\cM(H;x,y;A)$ is a suitable virtual count as in Pardon \cite{pardon2016algebraic,pardon2019contact} or just a signed count if $M$ is assumed to be semipositive. Finally $E(A) = \int_{S^1}\big(H_t(y(t))-H_t(x(t))\big)\,dt-\langle \omega,A\rangle$ is the topological energy of solutions in class $A$, or equivalently the increase in the action along such solutions. Continuation maps between such complexes corresponding to monotone nondecreasing homotopies of Hamiltonians are defined in a similar manner, with weighting by suitable powers of $T$.

\begin{rem}
  The name \emph{weighted Floer complex} comes from the inclusion of the $T$-weights in the differential.
\end{rem}

\subsubsection{Acceleration data and descent}\label{sss:accel_data_descent}

We need the following definitions.
\begin{defin}
\begin{itemize}
  \item An \tb{acceleration datum} is a $1$-ray of Hamiltonians. Given an acceleration datum $\cH$, we will write $\cH = (H_i)_{i=1}^\infty$, meaning that $H_i$ is the nondegenerate Hamiltonian at the $i$-th vertex of the ray, with the monotone $1$-cubes of Hamiltonians between them being implicit.
  \item Given two acceleration data $\cH = (H_i)_i$ and $\cH' = (H_i')_i$, we write $\cH \preceq \cH'$ if $H_i \leq H_i'$ for all $i$. Note that we do not require anything regarding the monotone $1$-cubes between the Hamiltonians at the vertices.
  \item If $\cH,\cH'$ are two acceleration data and $\cH\preceq \cH'$, a \tb{filling} $\cF \fc \cH \to \cH'$ is a $2$-ray of Hamiltonians whose top and bottom $1$-rays are $\cH,\cH'$.
\end{itemize}
\end{defin}

\begin{rem}\label{rem:exist_fillings}
  The existence of fillings follows from the results of \cite[Section 3.2.2]{varolgunes2018mayer}. More generally, whenever we have a partially defined monotone cube of Hamiltonians which is Floer-theoretic in the sense of \cite[Section 3.2.3]{varolgunes2018mayer}, which means that the given Hamiltonians do not decrease along broken gradient flow lines of our Morse function $\rho_n$, it can be filled to a monotone cube of Hamiltonians. The same is true of other kinds of Floer-theoretic configurations of Hamiltonians---slits, triangles, rays, and so on. We will use this existence in several places in what follows.
\end{rem}

\begin{rem}
  For the rest of Section \ref{s:proofs_properties_SH_pairs} we drop the superscript indicating the grading from all Floer complexes and complexes derived therefrom.
\end{rem}

\begin{notation}
  Given an acceleration datum $\cH = (H_i)_i$ we denote the corresponding Floer $1$-ray by $CF(\cH) = CF(H_1) \to CF(H_2) \to \dots$
\end{notation}

\begin{defin}\label{def:accel_datum_K}
  Let $K\subset M$ be compact. We say that an acceleration datum $\cH=(H_i)_i$ is an \tb{acceleration datum for $K$} if $(H_i)_i$ is a cofinal sequence in $C^\infty_{K\subset M} = \{H \in C^\infty(M\times S^1)\,|\,H|_{K\times S^1}<0\}$ relative to the usual order on functions.\footnote{This is a variant of the notation used in \cite{varolgunes2018mayer}.}
\end{defin}

\begin{defin}(Varolg\"une\c s \cite{varolgunes2018mayer})\label{def:rel_SH_subset}
  Let $K\subset M$ be compact and let $\cH = (H_i)_i$ be an acceleration datum for $K$. The corresponding complex is defined to be
  $$SC(\cH):=\wh\tel \,CF(\cH)\,,$$
  where the completion is done degree-wise, in accordance with our conventions, see Section \ref{ss:completion}. \tb{The relative symplectic cohomology of $K$ inside $M$} is
  $$SH(K):=H\big(SC(\cH)\big)\,.$$
  If $R$ is a commutative $\Lambda_{\geq0}$-algebra, \tb{the symplectic cohomology of $K$ with coefficients in $R$} is defined to be $SH(K;R):=H\big(SC(\cH)\otimes R\big)$.
\end{defin}
\begin{rem}
  Varolg\"une\c s proves in \cite{varolgunes2018mayer} that this is well-defined, in the sense that given another acceleration datum $\cH'$ for $K$, the two cohomology modules are canonically isomorphic. We will provide details of this proof in Section \ref{ss:SH_pair_well_defined} below. The proofs are only given for $R=\Lambda_{\geq0}$, but they work \emph{verbatim} for any $R$.
\end{rem}

Let us now recall what it means for two sets to be in descent; see \cite{varolgunes2018mayer}. Let $K,K'\subset M$ be compact subsets. Choose an acceleration datum $\cH^\bullet$ for $\bullet=K,K',K\cup K', K\cap K'$, such that $\cH^\bullet \preceq \cH^{\bullet'}$ whenever $\bullet \supset \bullet'$. These acceleration data can be fitted into a Hamiltonian $3$-ray by choosing suitable fillings for the corresponding Hamiltonian cubes, as proved in \cite{varolgunes2018mayer}, see also Remark \ref{rem:exist_fillings}. Passing to the corresponding $3$-ray of Floer complexes, taking its telescope and completing yields the following square:
$${\xymatrix{SC(K \cup K') \ar[r]\ar[d] &  SC(K')\ar[d] \\ SC(K)\ar[r] & SC(K'\cap K')}}$$
where $SC(\bullet)$ stands for $SC(\cH^\bullet)$. Denoting this square by $\cC$, we say that $K,K'$ are \tb{in descent} if $\cC$ acyclic, meaning that its repeated cone $\cone^{\circ 2}\cC$ is an acyclic complex.

\subsubsection{The relative symplectic cohomology of a pair}\label{sss:rel_SH_pairs}

Next we define one of the main characters in our story, the \emph{relative symplectic cohomology of a pair}.
\begin{defin}\label{def:rel_complex_pair}
  Let $(K,K')$ be a compact pair in $M$. Let $\cH,\cH'$ be acceleration data for $K,K'$, respectively, assume that $\cH \preceq \cH'$, and fix a filling $\cF \fc \cH \to \cH'$, which exists by Remark \ref{rem:exist_fillings}. This filling defines a $2$-ray of Floer complexes whose top and bottom $1$-rays are $CF(\cH)$ and $CF(\cH')$, respectively. Taking its completed telescope, we arrive at a cochain map $\Phi_\cF \fc SC(\cH) \to SC(\cH')$. We define the relative complex corresponding to $\cF$ to be
  $$SC(\cF):=\co \Phi_\cF\,,$$
  and the \tb{relative symplectic cohomology of the pair} $(K,K')$ as its cohomology:
  $$SH(K,K'):=H\big(SC(\cF)\big)\,.$$
\end{defin}

Theorem \ref{thm:existence_relative_SH_pairs} is an immediate consequence of the following result:
\begin{thm}\label{thm:SH_pair_well_defd_satisfies_axioms}
  The relative symplectic cohomology of a pair is well-defined independently of the chosen data. Moreover, it is a relative symplectic cohomology of pairs with coefficients in $\Lambda_{\geq 0}$.
\end{thm}
\noindent The proof of this theorem occupies Section \ref{ss:SH_pair_well_defined}, where we prove that the symplectic cohomology of a pair is well-defined and its properties are proved, with the exception of the Product axiom, to which Section \ref{ss:constructing_product} is dedicated.

\subsection{Well-definedness and properties of $SH(K,K')$}\label{ss:SH_pair_well_defined}

Here we prove that $SH^*(\cdot,\cdot)$ is well-defined, that is it is independent of the various choices of acceleration data, fillings, and so on, and then we prove that it satisfies the properties announced in Section \ref{ss:axioms_rel_SH_pairs}, thereby proving Theorems \ref{thm:SH_pair_well_defd_satisfies_axioms} and \ref{thm:existence_relative_SH_pairs}.

In \cite{varolgunes2018mayer} it was proved that $SH(K)$ is well-defined. Here we elaborate on that argument, and the goal is to construct a framework for the analogous proof for $SH(K,K')$.
\begin{defin}
\begin{itemize}
  \item If $\cH = (H_i)_i$ is an acceleration datum, we say that an acceleration datum $\wt\cH = (\wt H_j)_j$ is a \tb{subdatum} of $\cH$ if there is a strictly increasing sequence of natural numbers $i(j)$ such that $\wt H_j = H_{i(j)}$; note that we do not require anything of the $1$-cubes between them. We write $\wt \cH \subset \cH$. Note that $\cH \preceq \wt\cH$.
  \item Let us call acceleration data $\cH,\cH'$ \tb{equivalent} if there is a subdatum $\wt\cH \subset \cH$ such that $\cH' \preceq \wt\cH$.
\end{itemize}
\end{defin}
We let
  $$\cS_K = \{\cH = (H_i)_i\,|\, \cH\text{ is an acceleration datum for }K\}\,.$$
The collection of all acceleration data is partially ordered by $\preceq$, and $\cS_K$ is a directed subset. Moreover, any two acceleration data for $K$ are equivalent.

Below is a reformulation of Varolg\"une\c s's construction of the relative symplectic cohomology.
\begin{enumerate}
  \item Given an acceleration datum $\cH$, in Section \ref{ss:Floer_data_rays} we have defined the corresponding $1$-ray of Floer complexes $CF(\cH)$, and the corresponding complex $SC(\cH)$ and homology $SH(\cH) = H(SC(\cH))$.
  \item Given another acceleration datum $\cH'$ such that $\cH\preceq \cH'$, and a filling $\cF \fc \cH \to \cH'$, there is a $2$-ray of Floer complexes such that $CF(\cH)$ and $CF(\cH')$ are its top and bottom $1$-rays. Taking its telescope and completing, we arrive at a $1$-cube of the form $\Phi_\cF \fc SC(\cH) \to SC(\cH')$, which is simply a chain map between the complexes of $\cH,\cH'$. We use the same notation for the induced map on cohomology: $\Phi_\cF \fc SH(\cH) \to SH(\cH')$.
  \item If $\wt \cF \fc \cH \to \cH'$ is another filling, $\cF$ and $\wt\cF$ fit into a slit-like $3$-ray of Floer data like this:
      $$\xymatrix{\cH \ar@{->}@/^0.5pc/[r]^{\cF} \ar@{->}@/_0.5pc/[r]_{\wt\cF} \ar@{}|{\cG}[r] & \cH'}\,,$$
      where the third dimension is perpendicular to the page, and where $\cG$ is a suitable filling of the partially defined slit, which exists thanks to Remark \ref{rem:exist_fillings}. This yields a slit-like $3$-ray of Floer complexes, whose completed telescope is a $2$-slit of the form
      $$\xymatrix{SC(\cH) \ar@/^/[r]^{\Phi_\cF}\ar@/_/[r]_{\Phi_{\wt\cF}}& SC(\cH')}\,,$$
      in other words, we obtain a homotopy between $\Phi_\cF,\Phi_{\wt\cF}$, and therefore the two induce the same morphism on homology. It follows that the maps induced on homology by various fillings all coincide, and we denote the resulting map by $\Phi_{\cH}^{\cH'} \fc SH(\cH) \to SH(\cH')$.
  \item If $\cH\preceq \cH' \preceq \cH''$ are acceleration data and $\cF_0 \fc \cH \to \cH'$, $\cF_1 \fc \cH' \to \cH''$, $\cF_2 \fc \cH \to \cH''$ are fillings, they fit into a triangular $3$-ray of Hamiltonians, where the existence of the filling is thanks to Remark \ref{rem:exist_fillings}. To it there corresponds a triangular $3$-ray of Floer complexes, and its completed telescope is a $2$-triangle
      $$\xymatrix{SC(\cH) \ar[r]^{\Phi_{\cF_0}} \ar[rd]_{\Phi_{\cF_2}}& SC(\cH') \ar[d]^{\Phi_{\cF_1}} \\ & SC(\cH'')}$$
      which yields the relation $\Phi_{\cH'}^{\cH''} \circ \Phi_{\cH}^{\cH'} = \Phi_{\cH}^{\cH''}$ on homology.
  \item If $\wt\cH \subset \cH$ is a subdatum, Varolg\"une\c s proves in \cite{varolgunes2018mayer} that $\Phi_\cH^{\wt\cH}$ is an isomorphism. In particular, since $\Phi_\cH^\cH\circ\Phi_\cH^\cH=\Phi_\cH^\cH$, we see that $\Phi_\cH^\cH$ is the identity. All of the above means that $(SH(\cH),\Phi_\cH^{\cH'})$ is a direct system.
  \item If $\cH\preceq\cH'$ are equivalent, then there are subdata $\wt\cH \subset \cH$, $\wt \cH' \subset \cH'$ such that $\cH \preceq \cH'\preceq \wt \cH \preceq \wt\cH'$. Considering the resulting commutative diagram
      $$\xymatrix{SH(\cH) \ar[r]_{\Phi_\cH^{\cH'}} \ar@/^1pc/[rrr]^{\Phi_\cH^{\wt\cH}} & SH(\cH') \ar[rr]_>>>{\Phi_{\cH'}^{\wt\cH}} \ar@/_1.5pc/[rrr]_{\Phi_{\cH'}^{\wt\cH'}} && SH(\wt\cH)\ar[r]^{\Phi_{\wt\cH}^{\wt\cH}} & SH(\wt\cH')}$$
      and the fact that $\Phi_{\cH}^{\wt\cH}$ and $\Phi_{\cH'}^{\wt\cH'}$ are isomorphisms, we see that $\Phi_\cH^{\cH'}$ is likewise an isomorphism.
  \item If $K\subset M$ is a compact subset, we define
      $$SH(K) := \varinjlim_{\cH \in \cS_K}SH(\cH)\,,$$
      where the connecting maps are the above morphisms. Since all the connecting maps are isomorphisms, every natural morphism $SH(\cH) \to SH(K)$ is in fact an isomorphism.
  \item If $L\subset K$ is another compact set, the restriction morphism
      $$\res^K_L \fc SH(K) \to SH(L)$$
      is defined as follows. Let $\cH \in \cS_K$. There is $\cH' \in \cS_L$ with $\cH \preceq \cH'$. Consider the composition $SH(\cH) \xrightarrow{\Phi_\cH^{\cH'}} SH(\cH') \to SH(L)$, where the second map is the natural morphism into the direct limit. Thus we get a map $SH(\cH) \to SH(L)$. Using the cocycle identities for the comparison maps $\Phi$ as above, as well as properties of direct limits, we can show that this map is independent of the choice of $\cH'$. Moreover if $\cH_1\in\cS_K$ is such that $\cH\preceq\cH_1$, then the map $SH(\cH)\to SH(L)$ equals the composition $SH(\cH)\xrightarrow{\Phi_\cH^{\cH_1}}SH(\cH_1) \to SH(L)$. It follows from the universal property of colimits that we have described a map $\res^K_L \fc SH(K) \to SH(L)$. Moreover, since natural maps $SH(\cH) \to SH(K)$ and $SH(\cH') \to SH(L)$ for $\cH \in \cS_K,\cH'\in\cS_L$ are isomorphisms, the diagram
      $$\xymatrix{SH(\cH) \ar[r] \ar[d] & SH(K) \ar[d] \\ SH(\cH') \ar[r] & SH(L)}$$
      commutes, a fact which is convenient when we actually have to compute the restriction. It is also clear that $\res^K_K = \id$ and that restrictions satisfy $\res^L_P\circ\res^K_L = \res^K_P$ if $P\subset L\subset K$.
\end{enumerate}

We will now prove that $SH(K,K')$ is well-defined, define the restriction morphisms, and prove the properties formulated in Section \ref{ss:axioms_rel_SH_pairs}. Items (i-viii) contain the proof of well-definedness, restrictions are the subject of item (ix), while items (x-xii) contain the proofs of the normalization, triangle, and the Mayer--Vietoris properties.

The main obstacle to overcome is to prove that this cohomology is independent of the choice of acceleration data and the filling. This is done using the same scheme as in the above proof that $SH(K)$ is well-defined independently of the acceleration datum used, except that we have to add a dimension throughout, and that now we also have to invoke properties of cocones.

\begin{enumerate}
  \item If $\cF \fc \cH \to \cH'$ is a filling between acceleration data, we let $SC(\cF):=\co \Phi_\cF$. Note that we have an exact sequence
      $$0 \to SC(\cH')[-1] \to SC(\cF) \to SC(\cH) \to 0\,,$$
      see equation \eqref{eqn:exact_sequence_cocone}. We let $SH(\cF)=H(SC(\cF))$.
  \item If $\cH_i,\cH_i'$, $i=0,1$ are acceleration data such that $\cH_i\preceq \cH_i'$ for $i=0,1$, $\cH_0\preceq \cH_1$ and $\cH_0'\preceq \cH_1'$, consider fillings $\cF_i \fc \cH_i \to \cH_i'$. These fit into a $3$-ray $\cG$ of Hamiltonians, thanks to Remark \ref{rem:exist_fillings}, which then produces a $3$-ray of Floer complexes. Taking its telescope and completing, we arrive at a $2$-cube, where the vertical direction is the first one and the horizontal one is the second one:
      $$\xymatrix{SC(\cH_0) \ar[r]^{\Phi_{\cF_0}} \ar[d] & SC(\cH_0')\ar[d] \\ SC(\cH_1)\ar[r]^{\Phi_{\cF_1}} & SC(\cH_1')}$$
      Taking the cocone in the horizontal direction, we obtain a chain map $SC(\cF_0) \to SC(\cF_1)$. We let $B_\cG$ be the negative of this map. We use the same notation for the induced map on cohomology. Note that we have the following commutative diagram with rows being exact sequences:
      \begin{equation}\label{eqn:B_G}
        \xymatrix{0 \ar[r] & SC(\cH_0')[-1] \ar[r] \ar[d] & SC(\cF_0)\ar[r] \ar[d]^{-B_\cG} & SC(\cH_0) \ar[r] \ar[d] & 0 \\ 0 \ar[r] & SC(\cH_1')[-1] \ar[r] & SC(\cF_1) \ar[r] & SC(\cH_1)\ar[r] & 0}
      \end{equation}

      which is a particular case of equation \eqref{eqn:exact_sequence_cocone}.
  \item If in the previous situation we have another $3$-ray $\cG'$ extending the given fillings $\cF_i$, the two $3$-rays fit into a slit-like $4$-ray of Hamiltonians, and taking its telescope and completion, we obtain a $3$-slit as follows:
      $$\xymatrix{SC(\cH_0) \ar[r]^{\Phi_{\cF_0}} \ar@/^/[d]\ar@/_/[d] & SC(\cH_0')\ar@/^/[d]\ar@/_/[d] \\ SC(\cH_1)\ar[r]^{\Phi_{\cF_1}} & SC(\cH_1')}$$
      Taking cocone in the horizontal direction yields a $2$-slit giving a homotopy between $B_\cG,B_{\cG'}$ (see Remark \ref{rem:cocones_turn_ids_minus_ids}). We denote the resulting well-defined map on homology by $B_{\cF_0}^{\cF^1} \fc SH(\cF_0) \to SH(\cF_1)$.
  \item If we have three pairs of acceleration data $\cH_{ij}$, $i=0,1$, $j=0,1,2$, such that $\cH_{0j}\preceq \cH_{1j}$ and $\cH_{i0}\preceq\cH_{i1}\preceq\cH_{i2}$, and we have fillings $\cF_j \fc \cH_{0j} \to \cH_{1j}$, then all of this fits into a triangular $4$-ray of Hamiltonians, thanks to Remark \ref{rem:exist_fillings}, whose completed telescope is the following $3$-triangle:
      $$\xymatrix{SC(\cH_{00})\ar[rr]^{\Phi_{\cF_0}} \ar[rd] \ar[rdd] & & SC(\cH_{10})\ar[rd] \ar[rdd]|\hole \\ & SC(\cH_{01}) \ar[rr]^{\Phi_{\cF_1}} \ar[d] &&SC(\cH_{11})\ar[d] \\  & SC(\cH_{02})\ar[rr]^{\Phi_{\cF_2}} &&SC(\cH_{12})}$$
      Taking cocone in the horizontal direction yields the homotopy-commutative triangle (see Remark \ref{rem:cocones_turn_ids_minus_ids}):
      $$\xymatrix{SC(\cF_0) \ar[r]^{B_{\cF_0}^{\cF_1}} \ar[rd]_{B_{\cF_0}^{\cF_2}} & SC(\cF_1) \ar[d]^{B_{\cF_1}^{\cF_2}} \\ & SC(\cF_2)}$$
      It follows that on homology we have $B_{\cF_1}^{\cF_2}\circ B_{\cF_0}^{\cF_1} = B_{\cF_0}^{\cF_2}$.
  \item If $\cF \fc \cH \to \cH'$ is a filling and $\wt\cF \fc \wt\cH \to \wt \cH'$ is a filling between subdata, then using any $3$-ray $\cG$ of Hamiltonians extending $\cF,\wt\cF$, we arrive at the following diagram, which is a particular case of \eqref{eqn:B_G}:
      $$\xymatrix{0 \ar[r] & SC(\cH')[-1] \ar[r] \ar[d] & SC(\cF)\ar[r] \ar[d]^{-B_\cG} & SC(\cH) \ar[r] \ar[d] & 0 \\ 0 \ar[r] & SC(\wt\cH')[-1] \ar[r] & SC(\wt\cF) \ar[r] & SC(\wt\cH)\ar[r] & 0}$$
      Since the right and the left vertical arrows are quasi-isomorphisms, so is the middle one. Therefore $B_\cF^{\wt\cF} \fc SH(\cF) \to SH(\wt\cF)$ is an isomorphism. In particular $B_\cF^\cF$ is the identity map.

  \item Given a pair of acceleration data $\cH,\cH'$ with $\cH \preceq \cH'$, consider the set of fillings $\{\cF \fc \cH \to \cH'\}$. It parametrizes the system $(SH(\cF),B_{\cF}^{\cF'})$ of modules and isomorphisms satisfying the cocycle identity. It follows that we can take its colimit:
      $$SH(\cH,\cH'):=\varinjlim_{\cF\fc \cH\to\cH'}SH(\cF)\,,$$
      and that for any $\cF$ the natural map $SH(\cF) \to SH(\cH,\cH')$ is an isomorphism.
  \item If $(\cH_{ij})_{i,j=0,1}$ are acceleration data with $\cH_{0j} \preceq \cH_{1j}$ and $\cH_{i0}\preceq \cH_{i1}$, then the above discussion yields a natural map $SH(\cH_{00},\cH_{01}) \to SH(\cH_{10},\cH_{11})$. If we have a third such pair, then the corresponding maps obey the cocycle identity. Moreover the natural map $SH(\cH,\cH') \to SH(\cH,\cH')$ is the identity.
  \item If $K' \subset K \subset M$ are compact sets, put
      $$\cS_{K,K'}=\{(\cH,\cH') \in \cS_K \times \cS_{K'}\,|\, \cH \preceq \cH'\}\,.$$
      Abusing notation, let us denote by $\preceq$ the order on this set induced by the product order on $\cS_K\times\cS_{K'}$. This turns $\cS_{K,K'}$ into a directed set. Put
      $$SH(K,K'):=\varinjlim_{(\cH,\cH') \in \cS_{K,K'}}SH(\cH,\cH')\,.$$
      Using reasoning as above, we see that for any $(\cH,\cH') \in \cS_{K,K'}$ the natural morphism $SH(\cH,\cH') \to SH(K,K')$ is an isomorphism.
  \item If $(L,L') \subset (K,K')$ is a compact subpair, we can define the corresponding restriction morphism
      $$\res^{(K,K')}_{(L,L')} \fc SH(K,K') \to SH(L,L')$$
      similarly to the restriction map for the absolute case. Namely, we pick $(\cH,\cH') \in \cS_{K,K'}$ and $(\cG,\cG') \in \cS_{L,L'}$ with $(\cH,\cH')\preceq(\cG,\cG')$. We have the composition
      $$SH(\cH,\cH') \to SH(\cG,\cG') \to SH(L,L')\,,$$
      the latter being the natural map into the direct limit. It is easy to show that this composition is independent of the choice of $(\cG,\cG')$, and that moreover these maps $SH(\cH,\cH') \to SH(L,L')$ form a morphism from the direct system $(SH(\cH,\cH'))_{(\cH,\cH')\in\cS_{K,K'}}$ to $SH(L,L')$. In particular it yields a morphism $\res^{(K,K')}_{(L,L')}$ as claimed. It also follows that $\res^{(K,K')}_{(K,K')}$ is the identity and that these restriction morphisms satisfy the cocycle identity.
  \item Let us prove normalization: if $(\cH,\cH') \in \cS_{K,\varnothing}$, and $\cH'$ consists of a given $C^2$-small Morse function plus $i$, then $SC(\cH') = 0$ as Varolg\"une\c s shows in \cite{varolgunes2018mayer}. Therefore the canonical map $SH(\cH,\cH') \to SH(\cH)$ is an isomorphism. It is also easy to see the compatibility with restrictions.
  \item Let us prove the triangle property. Let $K'\subset K$ and pick $\cH \in \cS_K$, $\cH' \in \cS_{K'}$ such that $\cH \preceq \cH'$. Pick a filling $\cF \fc \cH \to \cH'$. The morphism $SH(K') \to SH(K,K')[1]$ is defined as follows. We have a natural morphism $SC(\cH')[-1]\to SC(\cF)$, that is $SC(\cH') \to SC(\cF)[1]$ after shifting. Passing to homology, we obtain $SH(\cH') \to SH(\cF)[1]$. Using the above methods, it is easy to show that this morphism is compatible with the various comparison morphisms, and therefore induces a well-defined map $SH(K') \to SH(K,K')[1]$. We have the short exact sequence
      $$0 \to SC(\cH')[-1] \to SC(\cF) \to SC(\cH) \to 0\,.$$
      Its long exact homology sequence reads
      $$\dots \to SH(\cH')[-1] \to SH(\cF) \to SH(\cH) \to \dots$$
      It is also compatible with the various comparison morphisms, therefore we have the long exact sequence
      $$\dots SH(K')[-1]\to SH(K,K') \to SH(K)\to\dots$$
      as claimed.
  \item Let us prove the Mayer--Vietoris property. Fix acceleration data for $K,K',K'',K'\cup K'',K'\cap K''$, fill out the resulting partially defined Hamiltonian $3$-ray, and consider the following $3$-cube obtained from it, where $SC(\cdot)$ means $SC$ for the corresponding acceleration datum:
      $$\xymatrix{SC(K) \ar[rr] \ar@{=}[rd] \ar@{=}[dd] && SC(K'\cup K'') \ar[rd] \ar[dd]|\hole \\ & SC(K)\ar@{=}[dd] \ar[rr] && SC(K'') \ar[dd] \\ SC(K)\ar@{=}[rd] \ar[rr]|(0.45)\hole  && SC(K')\ar[rd] \\ & SC(K)\ar[rr] && SC(K'\cap K'')}$$
      Let us call the square coming from the left face of this cube by $\cA$, the right square by $\cB$, and the resulting map by $\cA \xrightarrow{\cF}\cB$. There results a short exact sequence of squares
      $$0 \to \cB[-1] \to \co_3\cF \to \cA\to 0\,.$$
      Since $\cA$ is clearly acyclic, and since $\cB$ is acyclic by \cite{varolgunes2018mayer}, it follows that $\co_3\cF$ is acyclic, which, thanks to  \cite{varolgunes2018mayer} results in a long exact homology sequence, which is precisely the Mayer--Vietoris triangle.
\end{enumerate}

\subsection{Constructing the product}\label{ss:constructing_product}

The goal here is to prove the existence of a diagram
\begin{equation}\label{eqn:constructing_product}
  \xymatrix{SH(K,K') \otimes SH(K,K'') \ar@{-->}[r]^-{\wt*} \ar[d] & SH(K,K'\cup K'') \ar[d] \\ SH(K) \otimes SH(K) \ar[r]^-{*} & SH(K)}
\end{equation}
provided $(K,K',K'') \in \mathcal{CTD}$, that is $K',K'' \subset K$ and $K',K''$ satisfy descent. Here the vertical arrows are the canonical restrictions while the bottom arrow is the Tonkonog--Varolg\"une\c s product \cite{tonkonog2020super}. We will construct the top arrow and prove that the resulting diagram commutes.

\begin{rem}\label{rem:SH_indeed_algebra}
  Specializing to the case $K' =K''$, we obtain a product structure on $SH^*(K,K')$. \emph{This is the nonunital algebra structure with which we equip $SH^*(K,K')$ to make it conform to the axiomatic framework of Section \ref{ss:axioms_rel_SH_pairs}.}
\end{rem}

In the following description $SC(K)$ and so on stand for $SC$ of a suitably chosen acceleration datum for $K$. The above diagram is constructed as follows:

\noindent\tb{Step 1:} Using the construction of Section \ref{ss:folding_pullbacks}, we will construct a module $Q$, which is the pullback of the V-shaped diagram consisting of $SC(K'),SC(K''),SC(K'\cap K'')$ and restrictions, and a natural morphism $p \fc SC(K) \to Q$. In addition, we will establish a commutative triangle:
\begin{equation}\label{eqn:comm_diag_cocone_p_SC_K_K_prime_dble_prime}
  \xymatrix{\co(p) \ar[d] & SC(K,K'\cup K'')\ar[l] \ar[ld] \\ SC(K)}
\end{equation}
where the vertical arrow is the natural projection, while the diagonal arrow is the restriction. The key point here is that \emph{the top arrow is a quasi-isomorphism; it is here that Varolg\"une\c s's Mayer--Vietoris sequence enters}, as alluded to in Section \ref{ss:rel_SH_discussion}.

\noindent\tb{Step 2:} We will construct a ``zigzag'' diagram, which on passing to cohomology yields a commutative diagram
      \begin{equation}\label{eqn:comm_diagram_coho_product}
        \xymatrix{H\big(SC(K,K')\otimes SC(K,K'')\big) \ar[r]\ar[d] & H\big(\co(p)\big)\ar[d] \\  H\big(SC(K)\otimes SC(K)\big)\ar[r] & SH(K)}
      \end{equation}

\noindent\tb{Step 3:} Using the natural transformation $H(\cdot)\otimes H(\cdot) \to H(\cdot\otimes \cdot)$, diagram \eqref{eqn:comm_diagram_coho_product}, and applying the cohomology functor to diagram \eqref{eqn:comm_diag_cocone_p_SC_K_K_prime_dble_prime}, we arrive at the diagram
      $$\scriptsize\xymatrix{SH(K,K') \otimes SH(K,K'')\ar[r] \ar[d] & H\big(SC(K,K')\otimes SC(K,K'')\big) \ar[r]\ar[d] & H\big(\co(p)\big)\ar[d] & SH(K,K'\cup K'')\ar[ld] \ar[l]_{\cong} \\ SH(K) \otimes SH(K) \ar[r] & H\big(SC(K)\otimes SC(K)\big)\ar[r] & SH(K)}$$
      The top rightmost arrow is an isomorphism as a consequence of the discussion in Step 1 above. Inverting it and suitably composing the other arrows, we arrive at the desired diagram \eqref{eqn:constructing_product}. In the next two subsections we will describe Steps 1, 2 in more detail.
\begin{rem}
  It is possible to show, using the techniques appearing in Section \ref{ss:SH_pair_well_defined}, that the product we construct is independent of the choices of acceleration data, almost complex structures, and so on, and that it indeed commutes with restriction morphisms.
\end{rem}

\subsubsection{Step 1}

In what follows we implicitly choose acceleration data for all the sets appearing in the diagrams, such that if two sets $A,B$ satisfy $B\supset A$, then the corresponding acceleration data are related by $\preceq$. Moreover, we choose suitable fillings between those acceleration data, as well as higher-dimensional Hamiltonian rays as necessary, which is possible by Remark \ref{rem:exist_fillings}. Consider the cube
$$\xymatrix{SC(K) \ar[rr] \ar[dd] \ar[rd] && SC(K'') \ar@{=}[rd] \ar[dd]|\hole & \\ & SC(K'\cup K'')\ar[rr]\ar[dd] && SC(K'')\ar[dd] \\ SC(K') \ar[rr]|(0.463)\hole \ar@{=}[rd] &&  SC(K'\cap K'') \ar@{=}[rd] & \\ & SC(K')\ar[rr] && SC(K'\cap K'') }$$
Here the order of coordinates is as follows: the first one is toward the reader, the second one is to the right, and the third one is down. This cube is obtained by choosing a $4$-ray of Hamiltonians as indicated in the previous paragraph, passing to the corresponding $4$-ray of Floer complexes, and taking its completed telescope. Note that the non-identity arrows are chain level restriction maps.

Let us denote
$$Q = \co\big(SC(K'')\oplus SC(K') \xrightarrow{\res^{K''}_{K'\cap K''}+\res^{K''}_{K'\cap K''}}SC(K'\cap K'')\big)\,;$$
this is the pullback of the suitable V-shaped diagram, which can be seen as part of the front and the back faces of the above cube. Now let us apply folding to the above cube and then take cocone in the vertical (that is third) direction. We obtain a square
$$\xymatrix{SC(K) = \co(SC(K)\to 0) \ar[r]^-p \ar[d]_{-\res^K_{K'\cup K''}} & Q \ar[d]^{-\id} \\ SC(K'\cup K'') = \co(SC(K'\cup K'')\to 0)\ar[r] & Q}$$
where the minus signs come out of the definition of cocones, see Remark \ref{rem:cocones_turn_ids_minus_ids}. Negating the vertical and diagonal maps, we obtain a homotopy commutative triangle, from which we can build the following square:
$$\xymatrix{SC(K) \ar@{=}[d] \ar[r]^-{\res} & SC(K'\cup K'')  \ar[d] \\ SC(K)\ar[r]_p & Q}$$
Applying cocone in the horizontal direction, we arrive at the following morphism of short exact sequences, see equation \eqref{eqn:exact_sequence_cocone}:
$$\xymatrix{0 \ar[r] & SC(K'\cup K'')[-1] \ar[r] \ar[d] & SC(K,K'\cup K'') \ar[r] \ar[d]  & SC(K)\ar[r] \ar@{=}[d] & 0 \\ 0\ar[r] & Q[-1]\ar[r] & \co(p)\ar[r] & SC(K)\ar[r] &  0}$$
Here the first vertical arrow is a quasi-isomorphism by the Mayer--Vietoris property for sets in descent \cite{varolgunes2018mayer}. Since the last vertical arrow is also a quasi-isomorphism, we arrive at the conclusion that so is $SC(K,K'\cup K'') \to \co(p)$. The desired commutative triangle is just the rightmost square. This completes Step 1.

\subsubsection{Step 2}

Here we will prove the existence of the following commutative ``zigzag'' diagram:
\begin{equation}\label{eqn:zigzag_diagram}
\xymatrix{SC(K,K')\otimes SC(K,K'')\ar@{=}[r] \ar[d] & SC(K,K')\otimes SC(K,K'') \ar[d] \\ \cdot \ar[r]\ar[d] & SC(K) \otimes SC(K) \ar[d]\\ \cdot \ar[r] &\cdot \\  \cdot \ar[u]^{\text{qis}} \ar[d] \ar[r] & \cdot \ar[u]^{\text{qis}} \ar[d]  \\ \co(p) \ar[r] & SC(K) }
\end{equation}
where $\cdot$ stands for unspecified modules, which we will describe below, and ``qis'' stands for ``quasi-isomorphism.'' Passing to cohomology, inverting the isomorphisms resulting from quasi-isomorphisms in the above diagram, and composing the other morphisms, we will obtain the diagram \eqref{eqn:comm_diagram_coho_product} above as claimed. This will complete the construction of the diagram \eqref{eqn:constructing_product}.

The diagram \eqref{eqn:zigzag_diagram} is obtained as follows. Below we describe seven $3$-cubes numbered I--VII. We will compose cubes I through IV, juxtapose the result with cubes V, VI, VII, obtaining four $3$-cubes written side-by-side as a ``zigzag'' diagram of $2$-cubes and maps between them of the form $\cdot \to \cdot \to\cdot \leftarrow \cdot \to \cdot$. To this diagram we apply folding and cocone as in Section \ref{ss:folding_pullbacks}, and the above diagram \eqref{eqn:zigzag_diagram} is obtained as a result. Let us now describe this in detail.

In the cubes, the order of coordinates is as follows: the first is down, the second is right, and the third is perpendicular to the page. Cubes I--IV are tensor products of lower-dimensional ones, cubes V, VI come from the functoriality of completed telescopes (see Section \ref{ss:rays_telescopes}), while the last cube comes from Floer theory.

\tb{Cube I:}
$$\small\xymatrix{SC(K,K') \otimes SC(K,K'')\ar[rr] \ar[dd]\ar@{=}[dr] && 0\ar[dr] \ar@{=}[dd]|\hole & \\ & SC(K,K') \otimes SC(K,K'') \ar[rr] \ar[dd]  && SC(K,K') \otimes SC(K'') \ar[dd]  \\ 0 \ar@{=}[rr]|(0.62)\hole \ar@{=}[rd] && 0\ar@{=}[rd] & \\ & 0 \ar@{=}[rr] && 0}$$
This cube is obtained as follows. Consider the square
$$\xymatrix{SC(K,K'') \ar@{=}[d] \ar[r] & 0\ar[d] \\ SC(K,K'')\ar[r]_-{\res\circ\pr} & SC(K'')}$$
as in Remark \ref{rem:cocones_htpy_ker_square} and tensor it with $SC(K,K')$ to obtain
$$\xymatrix{SC(K,K')\otimes SC(K,K'') \ar[r] \ar@{=}[d] & 0\ar[d] \\ SC(K,K')\otimes SC(K,K'') \ar[r] & SC(K,K')\otimes SC(K'')}$$
This is the top face of our cube and it remains to append the bottom face consisting of zeros.

\tb{Cube II:}
$$\tiny\xymatrix{SC(K,K') \otimes SC(K,K'') \ar@{=}[rd] \ar[rr] \ar[dd] && SC(K,K') \otimes SC(K'') \ar@{=}[rd] \ar[dd]|\hole & \\ & SC(K,K') \otimes SC(K,K'')  \ar[rr] \ar[dd] && SC(K,K') \otimes SC(K'')\ar[dd] \\
0\ar@{=}[rr]|(0.51)\hole \ar[rd] && 0\ar[rd] & \\ & SC(K') \otimes SC(K,K'') \ar[rr] && SC(K') \otimes SC(K'')}$$
This is obtained by tensoring the square
$$\xymatrix{SC(K,K') \ar@{=}[r] \ar[d] & SC(K,K') \ar[d]^{\res\circ\pr}  \\ 0\ar[r] & SC(K')}$$
as in Remark \ref{rem:cocones_htpy_ker_square} by the map $SC(K,K'') \xrightarrow{\res\circ\pr} SC(K'')$, using the $(2,1)$-shuffle $(1)(23)$.

\tb{Cube III:}
$$\tiny\xymatrix{SC(K,K') \otimes SC(K,K'') \ar[rr] \ar[rd] \ar[dd] && SC(K,K') \otimes SC(K'') \ar[rd] \ar[dd]|\hole & \\ & SC(K) \otimes SC(K,K'')\ar[rr] \ar[dd] && SC(K) \otimes SC(K'')\ar[dd] \\
SC(K') \otimes SC(K,K'')\ar@{=}[rd] \ar[rr]|(0.51)\hole && SC(K') \otimes SC(K'') \ar@{=}[rd]& \\ & SC(K') \otimes SC(K,K'')\ar[rr] && SC(K') \otimes SC(K'')}$$
This is obtained by tensoring the square
$$\xymatrix{SC(K,K') \ar[r]^-{\pr} \ar[d]_-{\res\circ\pr} & SC(K)\ar[d]^{\res} \\ SC(K')\ar@{=}[r] & SC(K')}$$
by the map $SC(K,K'') \xrightarrow{\res\circ\pr} SC(K'')$, using the $(2,1)$-shuffle $(1)(23)$.

\tb{Cube IV:}
$$\scriptsize\xymatrix{SC(K) \otimes SC(K,K'') \ar[rr] \ar[dd] \ar[rd] && SC(K) \otimes SC(K'') \ar@{=}[rd] \ar[dd]|\hole & \\ & SC(K) \otimes SC(K)\ar[rr] \ar[dd] && SC(K) \otimes SC(K'') \ar[dd] \\
SC(K') \otimes SC(K,K'') \ar[rd] \ar[rr]|(0.51)\hole && SC(K') \otimes SC(K'') \ar@{=}[rd]& \\ & SC(K') \otimes SC(K) \ar[rr] && SC(K') \otimes SC(K'')}$$
This is obtained by tensoring the map $SC(K) \xrightarrow{\res} SC(K')$ by the square
$$\xymatrix{SC(K,K'')\ar[r]^-{\pr} \ar[d]_{\res\circ\pr} & SC(K)\ar[d]^{\res} \\ SC(K'')\ar@{=}[r] & SC(K'')}$$

\tb{Cube V:}
$$\scriptsize\xymatrix{SC(K) \otimes SC(K) \ar[rd] \ar[rr] \ar[dd]  && SC(K) \otimes SC(K'')\ar[rd]  \ar[dd]|\hole & \\ & \wh{\tel CF(\cH) \otimes \tel CF(\cH)} \ar[rr] \ar[dd] && \wh{\tel CF(\cH) \otimes \tel CF(\cH'')} \ar[dd]  \\ SC(K') \otimes SC(K)\ar[rd]  \ar[rr]|\hole && SC(K') \otimes SC(K'') \ar[rd]& \\ & \wh{\tel CF(\cH') \otimes \tel CF(\cH)} \ar[rr]  && \wh{\tel CF(\cH') \otimes \tel CF(\cH'')} }$$
Here $\cH,\cH',\cH''$ are acceleration data for $K,K',K''$, respectively. The cube is obtained by applying the natural transformation $\wh\cdot \otimes \wh\cdot \to \wh{\cdot\otimes \cdot}$ termwise. Note that the cube is straight in the direction perpendicular to the page.

\tb{Cube VI:}
$$\scriptsize\xymatrix{\wh{\tel CF(\cH) \otimes \tel CF(\cH)}\ar[rr] \ar[dd]  && \wh{\tel CF(\cH) \otimes \tel CF(\cH'')} \ar[dd]|\hole & \\ & \wh\tel(CF(\cH)\otimes CF(\cH)) \ar[ul]\ar[rr] \ar[dd] && \wh\tel(CF(\cH)\otimes CF(\cH'')) \ar[dd] \ar[ul] \\ \wh{\tel CF(\cH')\otimes \tel CF(\cH)}\ar[rr]|\hole && \wh{\tel CF(\cH')\otimes \tel CF(\cH'')} & \\ & \wh\tel(CF(\cH')\otimes CF(\cH)) \ar[rr] \ar[ul] && \wh\tel(CF(\cH')\otimes CF(\cH'')) \ar[ul]}$$
The construction of this cube before completion is outlined at the end of Section \ref{ss:rays_telescopes}. Note that even after completion, the diagonal arrows remain quasi-isomorphisms, thanks to \cite[Corollary 2.3.6 (3)]{varolgunes2018mayer}.

\tb{Cube VII:}
$$\xymatrix{\wh\tel(CF(\cH)\otimes CF(\cH)) \ar[rr] \ar[dd] \ar[rd] && \wh\tel(CF(\cH)\otimes CF(\cH'')) \ar[rd] \ar[dd]|\hole & \\ & SC(K) \ar[rr] \ar[dd] && SC(K'')\ar[dd] \\
\wh\tel(CF(\cH')\otimes CF(\cH)) \ar[rd] \ar[rr]|\hole && \wh\tel(CF(\cH')\otimes CF(\cH'')) \ar[rd] & \\ & SC(K')\ar[rr] && SC(K'\cap K'')}$$
This is the Floer theoretic cube corresponding to products, which are the arrows perpendicular to the page, and restriction maps, which are the rest of the arrows. We note here that the back face of this cube coincides with the front face of cube VI.

We now compose cubes I-IV, take the resulting cube and juxtapose it with cubes V, VI, VII. There results a diagram of five $2$-cubes and four maps between them going as follows: $\cdot \to \cdot \to\cdot \leftarrow \cdot \to \cdot$, that is we have a ``zigzag'' in the middle. We can now apply folding to this diagram and then take cocone in the vertical direction, which results in the diagram on the left. The diagram on the right is obtained from it by taking its cocone in the horizontal direction and appealing to the natural map from the cocone to the domain of a map, see equation \eqref{eqn:exact_sequence_cocone}:

$$\small\xymatrix{SC(K,K')\otimes SC(K,K'') \ar[r] \ar[d] & 0 \ar[d] \\ SC(K) \otimes SC(K) \ar[d] \ar[r] & \cdot \ar[d] \\ \wh{\tel CF(\cH)\otimes \tel CF(\cH)} \ar[r] & \cdot \\ \wh\tel(CF(\cH)\otimes CF(\cH)) \ar[u]^{\text{qis}} \ar[r] \ar[d] & \cdot \ar[u]^{\text{qis}} \ar[d] \\ SC(K) \ar[r]^p & Q} \xymatrix{SC(K,K')\otimes SC(K,K'')\ar@{=}[r] \ar[d] & SC(K,K')\otimes SC(K,K'') \ar[d] \\ \cdot \ar[d] \ar[r] & SC(K) \otimes SC(K) \ar[d] \\ \cdot \ar[r] & \wh{\tel CF(\cH)\otimes \tel CF(\cH)}  \\ \cdot \ar[u]^{\text{qis}} \ar[d] \ar[r] & \wh\tel(CF(\cH)\otimes CF(\cH)) \ar[u]^{\text{qis}} \ar[d]  \\ \co(p) \ar[r] & SC(K) }$$
The required diagram \eqref{eqn:zigzag_diagram} is the one appearing on the right. This completes Step 2 and therefore the construction of the product on relative symplectic cohomology.

\section{Symplectic rigidity and computations}\label{s:examples_computations}

\subsection{Proof of Theorem \ref{thm:torus_cross}}\label{ss:pf_thm_torus_cross}

Here we prove Theorem \ref{thm:torus_cross}, based on Theorem \ref{thm:IVQM_annuli_in_tori}, which is proved in Section \ref{ss:SH_special_domains}. The other ingredient we need is Lemma \ref{lem:IVQM_products}, whose proof appears below.
\begin{proof}[Proof of Theorem \ref{thm:torus_cross}]
    Recall the statement of the theorem: any involutive map $f \fc \T^6\times S^2 \to B$, where $B$ is a surface, has a fiber which intersects all the sets of the form $T(a,b,c)\times\text{equator}$.

	Let $\tau$ be the quantum cohomology IVQM on $\T^6\times S^2$. The K\"unneth formula yields
	$$QH^*(\T^6 \times S^2) = QH^*(\T^6) \otimes QH^*(S^2) = H^{*\bmod 4}(\T^6;\Lambda)\otimes\Lambda\langle 1,h\rangle\,.$$
	Let $\alpha = [dq_1\wedge dq_2],\beta=[dp_1\wedge dp_3],\gamma=[dp_2\wedge dq_3] \in QH^*(\T^6)$. Consider the graded ideal $I\subset QH^*(\T^6\times S^2)$ generated by $\alpha\otimes 1,\beta\otimes 1,\gamma\otimes 1$. Since
	$$\alpha*\beta*\gamma = [dq_1\wedge dq_2\wedge dq_3\wedge dp_1\wedge dp_2\wedge dp_3] \neq 0\,,\quad\text{we have}$$
	$$(\alpha\otimes 1)*(\beta\otimes 1)*(\gamma\otimes 1) = (\alpha*\beta*\gamma)\otimes 1 \neq 0\,,$$
	which implies that $I^3 \neq 0$. Theorem \ref{thm:symplectic_Tverberg} thus implies that there is $b_0 \in B$ such that the corresponding fiber $f^{-1}(b_0)$ of $f$ intersects each compact $Z\subset \T^6\times S^2$ with $I\subset\tau(Z)$. It remains to show that
    $$I\subset \tau\big(T(a,b,c)\times\text{equator}\big)\,.$$

 Thanks to Theorem \ref{thm:IVQM_annuli_in_tori}\footnote{We need to rearrange the coordinates on $\T^6$ to apply the theorem for $\gamma$.} we have
	$$\alpha \in \tau(T_1(a))\,,\quad \beta \in \tau(T_2(b))\,,\quad \gamma \in \tau(T_3(c))\,,$$
	which by monotonicity implies that $\alpha,\beta,\gamma \in \tau(T(a,b,c))$. For any equator $L\subset S^2$, the complement $S^2\setminus L$ is the union of two displaceable open disks, and therefore by Lemma \ref{lem:IVQM_products} we have the crucial conclusion that $\alpha\otimes 1,\beta\otimes 1,\gamma\otimes 1 \in \tau(T(a,b,c)\times L)$, and consequently that $I \subset \tau(T(a,b,c)\times L)$, as required.
\end{proof}

It remains to prove Lemma \ref{lem:IVQM_products}.
\begin{proof}[Proof of Lemma \ref{lem:IVQM_products}]In this proof we abbreviate $\res_K\equiv\res^M_K$ and similarly for $N$ and $M\times N$. For the first assertion it suffices to show that for any neighborhood $V$ of $L$ we have $SH^*(V^c;\Lambda) = 0$. Let $N \setminus  L = W_1\cup\dots\cup W_k$ be a decomposition into pairwise disjoint displaceable open sets. It follows that $V^c = \bigcup_{i=1}^k (V^c\cap W_i)$, and moreover that each $V^c\cap W_i$ is displaceable, being contained in $W_i$, and compact, because it is the complement in $V^c$ of $\bigcup_{j\neq i}(V^c\cap W_j)$, which is open in $V^c$. Thus $SH^*(V^c\cap W_i;\Lambda) = 0$, and by the Mayer--Vietoris property
$$SH^*(V^c;\Lambda) = \bigoplus_i SH^*(V^c\cap W_i;\Lambda) = 0\,.$$

For the second assertion it is enough to prove that if $\alpha\in\tau(K)$ and $\beta\in \tau(L)=SH^*(N;\Lambda)$, then for every pair of neighborhoods $U\supset K$, $V \supset L$ we have
\[
	\res_{(U\times V)^c}(\psi(\alpha\otimes \beta))=0\,,
\]
since sets of the form $(U\times V)^c$ are cofinal in the collections of compacts disjoint from $K\times L$. To prove this, we need the following

\tb{Claim:} The restriction $\res^{(U\times V)^c}_{U^c\times N}$ is an isomorphism.

Assuming this for a moment, and using the naturality of the K\"unneth morphism with respect to restrictions, we obtain the commutative diagram
$$\xymatrix{SH^*(M;\Lambda) \otimes SH^*(N;\Lambda) \ar[r]^-\psi \ar[d]_{\res_{U^c} \otimes \id} & SH^*(M\times N;\Lambda) \ar[rr]^{\res_{(U\times V)^c}} \ar[d]_{\res_{U^c\times N}} && SH^*((U\times V)^c;\Lambda)\ar[dll]^{\res^{(U\times V)^c}_{U^c\times N}} \\ SH^*(U^c;\Lambda)\otimes SH^*(N;\Lambda) \ar[r]^-\psi & SH^*(U^c\times N;\Lambda)}$$
It follows that
\begin{align*}
  \res_{(U\times V)^c}(\psi(\alpha\otimes\beta)) &= \big(\res^{(U\times V)^c}_{U^c\times N}\big)^{-1}\big(\psi((\res_{U^c}\otimes \id)(\alpha\otimes\beta))\big)\\
  &= \big(\res^{(U\times V)^c}_{U^c\times N}\big)^{-1}\big(\psi(\underbrace{\res_{U^c}(\alpha)}_{=0}\otimes \beta)\big) = 0\,,
\end{align*}
as claimed. Here we used that $\res_{U^c}(\alpha)=0$, which follows from $\alpha \in \tau(K)$.

It remains to prove the above claim. First, we claim that the sets $U^c\times N,M\times V^c$ commute. Indeed, let $f\fc M \to[0,1]$ and $g\fc N\to[0,1]$ be smooth functions with $f^{-1}(0) = U^c$ and $g^{-1}(0) = V^c$; then the map $f\times g \fc M \times N \to [0,1]^2$ is involutive and
$$U^c\times N = (f\times g)^{-1}(\{0\}\times[0,1])\,,\quad M\times V^c = (f\times g)^{-1}([0,1]\times \{0\})\,,$$
and therefore these sets commute thanks to Remark \ref{rem:preimages_involutive_maps}. Noting that $(U^c\times N)\cup(M\times V^c) = (U\times V)^c$ and $(U^c\times N)\cap(M\times V^c) = U^c\times V^c$, the corresponding exact Mayer--Vietoris triangle reads
$$\xymatrix{SH^*\big((U\times V)^c;\Lambda\big) \ar[rr]^-{\big(\res^{(U\times V)^c}_{U^c\times N},\res^{(U\times V)^c}_{M\times V^c}\big)} && SH^*(U^c\times N;\Lambda)\oplus SH^*(M\times V^c;\Lambda) \ar[ld] \\ & SH^*(U^c\times V^c;\Lambda) \ar[ul]}$$
Since $V^c$ is a finite union of pairwise disjoint displaceable compact sets, so are $M\times V^c$ and $U^c\times V^c$, therefore $SH^*(M\times V^c;\Lambda) = SH^*(U^c\times V^c;\Lambda) = 0$ by the Mayer--Vietoris property, whence the top arrow in the triangle is the desired isomorphism
$$SH^*\big((U\times V)^c;\Lambda\big) \xrightarrow{\res^{(U\times V)^c}_{U^c\times N}} SH^*(U^c\times N;\Lambda)\,.$$
\end{proof}

\subsection{Proof of Theorem \ref{thm:IVQM_annuli_in_tori}}\label{ss:SH_special_domains}

Recall the formulation of the theorem: If $(M=\T^{2n},\omega=dp\wedge dq)$ is the standard symplectic torus and $S\subset \T^n$ is closed or open, then
$$\tau(S\times\T^n) = \mu_M(S\times\T^n)=\mu_{\T^n}(S)\otimes H^*(\T^n;\Lambda)\subset H^*(\T^n;\Lambda)\otimes H^*(\T^n;\Lambda)= H^*(M;\Lambda)\,,$$
where the last equality is the K\"unneth formula, while for a space $X$, $\mu_X$ stands for the cohomology IVM on $X$. Also recall that $QH^*(M)=H^*(M;\Lambda)$.

Let us first prove this when $S$ is closed. Let $h\fc \T^n\to[0,1]$ be a smooth function which vanishes exactly on $S$. Let $A\subset (0,1)$ be the set of its regular values and define $Q_\alpha=\{h\geq\alpha\}$ for $\alpha \in A$. The collection $(Q_\alpha\times\T^n)_{\alpha\in A}$ is cofinal in the family of compacts which are disjoint from $S\times \T^n$. It follows that
$$\tau(S\times \T^n) = \bigcap_{\alpha\in A}\ker \big(QH^*(M)\to SH^*(Q_\alpha\times\T^n;\Lambda)\big)\,.$$
Since $\partial Q_\alpha\subset \T^n$ is a smooth coorientable hypersurface, the inclusion $Q_\alpha\hookrightarrow\T^n$ can be extended to an embedding $\iota \fc (-\epsilon,\epsilon)\times\partial Q_\alpha\hookrightarrow \T^n$ for some $\epsilon>0$. The induced embedding $\wt\iota:=\iota\times\id_{\T^n} \fc (-\epsilon,\epsilon)\times \partial Q_\alpha\times\T^n \hookrightarrow M$ then has the property that for each $\rho\in(-\epsilon,\epsilon)$, $\wt\iota(\{\rho\}\times\partial Q_\alpha\times\T^n)$ has no closed contractible characteristics. Indeed, if $\Sigma\subset \T^n$ is any hypersurface, then each closed charateristic of $\Sigma\times\T^n$ has the form $\pt\times\gamma$, where $\gamma\subset \T^n$ is a straight circle, which implies that it is noncontractible. Since $\wt\iota(\{\rho\}\times\partial Q_\alpha\times\T^n) = \iota(\{\rho\}\times \partial Q_\alpha)\times\T^n$, our claim follows, and thus Theorem \ref{thm:domain_no_contr_orbits_bdry_SH_equal_H} applies to the region $Q_\alpha\times\T^n$, since its boundary is $\partial (Q_\alpha\times\T^n)=\partial Q_\alpha\times \T^n$, and we obtain
$$\ker \big(QH^*(M)\to SH^*(Q_\alpha\times\T^n;\Lambda)\big) = \ker\big(H^*(M;\Lambda)\to H^*(Q_\alpha\times\T^n;\Lambda)\big)\,,$$
whence
$$\tau(S\times \T^n) = \bigcap_{\alpha\in A}\ker\big(H^*(M;\Lambda)\to H^*(Q_\alpha\times\T^n;\Lambda)\big)\,,$$
which equals $\mu_M(S\times\T^n)$ by the same cofinality property. Finally, the equality
$$\mu_M(S\times\T^n) = \mu_{\T^n}(S)\otimes H^*(\T^n;\Lambda)$$
follows from the cofinality of $(Q_\alpha)_{\alpha\in A}$ in the family of compact subsets of $\T^n$ which are disjoint from $S$, and the K\"unneth formula.

If $S\subset \T^n$ is open, then, since the collection of sets of the form $K\times\T^n$, where $K\subset S$ is compact, is cofinal in the family of compacts contained in $S\times\T^n$, it follows that
\begin{multline*}
\tau(S\times\T^n)=\bigcup_{K\text{ cpt}\subset S}\tau(K\times\T^n)=\bigcup_{K\text{ cpt}\subset S}\mu_M(K\times\T^n)\\=\bigcup_{K\text{ cpt}\subset S}\mu_{\T^n}(K)\otimes H^*(\T^n;\Lambda)=\mu_{\T^n}(S)\otimes H^*(\T^n;\Lambda)\,,
\end{multline*}
as claimed. The proof is complete.

\subsection{Proof of Theorem \ref{thm:Liouville_dom_idx_bdd_SH}}\label{ss:pf_thm_SH_Liouv_domains}

Recall that the theorem asserts that given a contact-type region $K$ with incompressible index-bounded boundary in a closed symplectically aspherical symplectic manifold $(M,\omega)$, we have:
\begin{enumerate}
  \item There is a canonical isomorphism $SH^*(K;\Lambda) \cong SH^*_{\cl}(\wh K;\Lambda)$, where $SH^*_{\cl}(\wh K;\Lambda)$ is the classical symplectic cohomology of the completion $\wh K$;
  \item $\ker \big(H^*(M;\Lambda) \to H^*(K;\Lambda)\big) \subset \ker \big(\res^M_K \fc SH^*(M;\Lambda) \to SH^*(K;\Lambda)\big)$.
\end{enumerate}

First, in Section \ref{sss:class_weighted_HF} we describe the relation between, on the one hand, the Floer complexes we use in this paper, as described in Section \ref{ss:Floer_data_rays}, where the differentials, continuation maps, and so on, carry weights which are suitable powers of the Novikov parameter $T$, and, on the other hand, unweighted Floer complexes. Then in Section \ref{sss:SH_Liouv_domains} we recall the definition of the classical symplectic cohomology of the completion $\wh K$ of $K$ in two incarnations---the weighted $SH^*(\wh K;\Lambda)$ and the unweighted $SH^*_{\cl}(\wh K;\Lambda)$---and establish a canonical identification between them. In Section \ref{sss:SH_K_equals_SH_completion_K} we construct an isomorphism $SH^*(\wh K;\Lambda)\cong SH^*(K;\Lambda)$, thereby proving item (i). Finally, in Section \ref{sss:pf_ker_res_M_K} we prove the containment assertion (ii). We omit the almost complex structures from the notation throughout.

\subsubsection{Weighted and unweighted Floer cohomology}\label{sss:class_weighted_HF}

Here we establish an isomorphism between the weighted and the unweighted Floer theories. Given a nondegenerate Hamiltonian $H$, its unweighted Floer cochain complex over $\Lambda$ is
$$CF^*_{\text{uw}}(H;\Lambda) = \bigoplus_{x \in \cP^\circ(H)}\Lambda\cdot x$$
carrying the differential
\begin{equation}\label{eqn:Floer_diff_unweighted}
  d_{\text{uw}}x = \sum_{y \in \cP^\circ(H)}\#\cM(H;x,y)\,y\,,
\end{equation}
where $\cM(H;x,y)$ is the moduli space of Floer trajectories of $H$ from $x$ to $y$ of index difference $1$. We let $HF^*_{\text{uw}}(H;\Lambda)$ be the cohomology of $(CF_{\text{uw}}^*(H;\Lambda),d_{\text{uw}})$.

As described in Section \ref{ss:Floer_data_rays}, the complex we use in this paper is the $\Lambda_{\geq0}$-module
$$CF^*(H) = \bigoplus_{x \in \cP^\circ(H)}\Lambda_{\geq 0}\cdot x$$
with the weighted differential
$$dx = \sum_{y \in \cP^\circ(H)}\#\cM(H;x,y)T^{\cA_H(y) - \cA_H(x)}\,y\,,$$
where $\cA_H$ is the action functional, which is well defined on contractible loops due to asphericity. Note that in Section \ref{ss:Floer_data_rays} the weight we mentioned is the topological energy of a Floer cylinder, which in the aspherical case equals the action difference between the orbits to which it is asymptotic.

The following relates the two constructions.
\begin{prop}\label{prop:Weighted_vs_classical_CF}
     The map
     $$\psi \fc CF^*_{\text{\rm uw}}(H;\Lambda)\to CF^*(H)\otimes_{\Lambda_{\geq0}}\Lambda \,,\qquad x \mapsto x\otimes T^{\cA_H(x)}$$
     is a chain isomorphism. It is compatible with continuation maps on both sides. It induces a complete identification between unweighted Floer theory with coefficients in $\Lambda$ and its weighted counterpart: $HF^*_{\text{\rm uw}}(\cdot;\Lambda)\cong HF^*(\cdot)\otimes\Lambda\,.$

\end{prop}
\begin{proof}
    Since the two theories count the same moduli spaces in the same way, with the difference lying in the $T$-weights, the first assertion is easily verified by keeping track of the powers of $T$ in appropriate commutative diagrams. For the second assertion, note the flatness of $\Lambda$ as a $\Lambda_{\geq0}$-module, which we use in the last isomorphism in the following:
    \begin{equation}
    \label{eqiso-vsp}
    HF^*_{\text{\rm uw}}(H;\Lambda) = H^*(CF^*_{\text{\rm uw}}(H;\Lambda)) \cong H^*(CF^*(H)\otimes\Lambda)= HF^*(H)\otimes\Lambda\,.\end{equation}
\end{proof}


\subsubsection{Symplectic cohomology of regions with contact-type boundary}\label{sss:SH_Liouv_domains}

Here we take $K\subset M$ to be a region with contact-type  boundary, not necessarily incompressible or index-bounded. We denote by $Y$ the Liouville vector field defined on a neighborhood of $\Sigma=\partial K$, let $\lambda=\iota_Y\omega$ be the corresponding Liouville form, and let $\alpha=\lambda|_\Sigma$ be the induced contact form. The symplectization of $(\Sigma,\alpha)$ is then $(\Sigma\times(0,\infty)_r,d(r\alpha))$. We say that $a \in \R$ is \tb{noncharacteristic} if it is not the period of a Reeb orbit of $\Sigma$, which is contractible in $K$.

We fix $\epsilon > 0$ such that the map $\Sigma \times (1-\epsilon,1+\epsilon)_r \to M$, $(z,r)\mapsto \phi_Y^{\ln r}(z)$ is a well-defined smooth embedding, where $\phi_Y^t$ is the local flow of $Y$. This map is then a symplectomorphism onto its image, where $\Sigma\times(1-\epsilon,1+\epsilon)$ is endowed with the restriction of $d(r\alpha)$. In what follows we identify this part of the symplectization with the image of the above map in $M$.

Throughout we pick a generic almost complex structure which is cylindrical near $\Sigma$, that is in Liouville coordinates in the neighborhood $\Sigma\times (1-\epsilon,1+\epsilon)$ it is $r$-invariant, preserves the contact structure on $\Sigma$, and maps $Y$ to the Reeb vector field along $\Sigma\times \lbrace 1 \rbrace$. We omit it from notation.

Let $\widehat{K}$ denote the completion of $K$, obtained by attaching the positive end of the symplectization of $\Sigma$, that is $(\Sigma \times [1,\infty) , d(r\alpha))$, to $K$ along $\Sigma$. Let us first precisely define $SH^*(\widehat{K};\Lambda)$ and $SH_{\cl}^*(\widehat{K};\Lambda)$---the weighted and the unweighted classical symplectic cohomology of $\wh K$---in our setting.

Similarly to Definition \ref{def:accel_datum_K}, we define an \tb{asymptotically linear acceleration datum for} $K\subset \wh K$ to be an acceleration datum $(H_n)_n$ for $K$ in $\wh K$, with the additional requirement that each $H_n$ be of the form $a_nr+b_n$ outside a neighborhood of $K$. Note that our conventions in particular imply that each $H_n$ is nondegenerate, which implies that $a_n$ is noncharacteristic, and that we have fixed nondecreasing homotopies $(H_n^s)_{s\in\R}$ from $H_n$ to $H_{n+1}$.

%

We consider the Floer complexes $CF^*(H_n)$, defined in Section \ref{ss:Floer_data_rays}, and for each $n$ we define a continuation map $\Phi_n \fc CF^*(H_n) \to CF^*(H_{n+1})$ by counting the corresponding continuation solutions, weighted by their topological energy, namely,
\[
	\Phi_n(x) = \sum_{y \in \mathcal{P}^\circ(H_{n+1})} \#\mathcal{M}((H_n^s)_s;x,y)\cdot T^{\mathcal{A}_{H_{n+1}}(y) - \mathcal{A}_{H_n}(x)}\cdot y\,.
\]
These yield a $1$-ray:
\[
  CF^*(H_1) \xrightarrow{\Phi_1} CF^*(H_2) \xrightarrow{\Phi_2} CF^*(H_3) \to \dots
\]
We put
\begin{equation}\label{eq:def_SC_classical_weighted}
  SH^*(\widehat{K};\Lambda) := H\Big(\varinjlim_{n\to\infty} CF^*(H_n)\otimes \Lambda\Big) \cong H^*\Big(\varinjlim_{n\to\infty} CF^*(H_n)\Big) \otimes \Lambda\,,
\end{equation}
where the last isomorphism is due to the flatness of $\Lambda$.

The ``classical'' symplectic cohomology $SH^*_{\cl}(\wh K;\Lambda)$ was defined by Viterbo in \cite{viterbo1999functorsI} using unweighted differentials and continuation maps, namely:
\begin{equation}\label{eq:def_SH_classical_unweighted}
SH_{\cl}^*(\widehat{K};\Lambda) := H\left( \varinjlim_{n\to\infty} CF_{\text{\rm uw}}^*(H_n;\Lambda) \right)=\varinjlim_{n\to\infty}HF^*_{\text{\rm uw}}(H_n;\Lambda)\,.
\end{equation}
Since by Proposition \ref{prop:Weighted_vs_classical_CF}, $HF_{\text{\rm uw}}^*(\cdot;\Lambda) = HF^*(\cdot) \otimes_{\Lambda_{\geq0}} \Lambda$, the two versions agree:
	\begin{multline}\label{eq:SH_cl_eq_SH_domain}
		SH_{\cl}^*(\widehat{K};\Lambda) = \varinjlim_{n\to\infty}HF^*_{\text{\rm uw}}(H_n;\Lambda) = \varinjlim_{n\to\infty}(HF^*(H_n)\otimes\Lambda)\\
        =\Big( \varinjlim_{n\to\infty} HF^*(H_n)\Big)\otimes\Lambda = H\Big(\varinjlim_{n\to\infty} CF^*(H_n)\Big)\otimes\Lambda   = SH^*(\widehat{K};\Lambda)\,.
	\end{multline}
\noindent Note that any other asymptotically linear acceleration datum for $K\subset \wh K$ yields the same homology groups, as can be shown using continuation morphisms between the Hamiltonians in the two acceleration data.

\subsubsection{Relative $SH$ for index bounded regions}\label{sss:SH_K_equals_SH_completion_K}
This section is dedicated to proving the first assertion of Theorem \ref{thm:Liouville_dom_idx_bdd_SH}. We keep the notations from the beginning of the previous subsection. Here we assume in addition that $K$ has incompressible and index-bounded boundary. The incompressibility implies that a loop in $\Sigma$ is contractible in $\Sigma$ if and only if it is in $K$ and in $M$. Thus $a \in \R$ is noncharacteristic if and only if it is not the period of a Reeb orbit of $\Sigma$ which is contractible in $\Sigma$, $K$, or $M$.

Since we have just shown that $SH^*_{\cl}(\wh K;\Lambda)=SH^*(\wh K;\Lambda)$, it remains to construct an isomorphism $SH^*(\wh K;\Lambda) \cong SH^*(K;\Lambda)$, which is what we do here.

To compute $SH^*(K;\Lambda)$, we construct a suitable acceleration datum where we can separate the generators lying inside and outside $K$ by action. Recall from the beginning of the previous subsection that we have identified a neighborhood of $\Sigma$ with the portion $\Sigma\times(1-\epsilon,1+\epsilon)_r$ of the symplectization of $\Sigma$. Choose a positive decreasing sequence $\epsilon_n\to 0$ with $4\epsilon_1<\epsilon$. We choose an increasing sequence of smooth autonomous Hamiltonians on $M$ as follows:
\begin{equation}\label{eq:def_H_n_tilde}
  \wt H_n(x)=\begin{cases}
    -1/n\,, & x \in K \\
    f_n(r)\,, & x\in \Sigma\times[1,1+4\epsilon_n] \\
    n\,, & x \notin K\cup\Sigma\times[1,1+4\epsilon_n]\,,
  \end{cases}
\end{equation}
where $f_n \fc [1,1+4\epsilon_n] \to \R$ is a monotone increasing smooth function such that $f_n(r) = a_nr+b_n$ on $[1+\epsilon_n,1+3\epsilon_n]$ for some constants $a_n,b_n$ with $a_n>0$ being noncharacteristic. We also require that $\epsilon_na_n$ be a bounded sequence. Note that the contractible $1$-periodic orbits of $\wt H_n$ in $\Sigma\times[1,1+4\epsilon_n]$ all have the form $\gamma\times\{r\}$, where $\gamma$ is a contractible Reeb orbit of $\Sigma$ while $r\in(1,1+4\epsilon_n)$ is such that $f_n'(r)$ is the period of $\gamma$. The action of such an orbit is $\cA_{\wt H_n}(\gamma\times\{r\}) = f_n(r) - rf_n'(r)$, the $y$-intercept of the tangent to the graph of $f_n$ at $r$.

By our nondegeneracy assumption, the contractible Reeb orbits of $\alpha$ are isolated, there is only a finite number of geometrically distinct such orbits, and the set of periods is discrete. It follows that the $1$-periodic orbits of $\wt H_n$ we have just described come as a finite collection of isolated circles, and if two such circles share the same value of $r$, they are disjoint in $\Sigma$.

We now construct a nondegenerate perturbation $H_n$ of $\wt H_n$, as follows: we multiply $\widetilde{H}_n\vert^{}_K$ by a positive $C^2$-small Morse function in $K$ (the same function for all $n$), add a $C^2$-small Morse function in $M\setminus (K\cup \Sigma \times (1,1+4\varepsilon_n))$, leave the linear part untouched, and perform a small time-dependent perturbation in the nonlinear parts, such that the perturbation only happens in a small enough neighborhood of each isolated circle of $1$-periodic orbits of $\wt H_n$, and such that each such circle splits into two $1$-periodic orbits of $H_n$.

The contractible $1$-periodic orbits of $H_n$ therefore fall into four groups: $\cP_{\mathrm L}(H_n)$ and $\cP_{\mathrm H}(H_n)$ (`L' and `H' for `low' and `high'), which are the critical points in $K$ and in $M\setminus (K\cup\Sigma\times[1,1+4\epsilon_n])$, respectively, and $\cP_\downarrow(H_n),\cP_\uparrow(H_n)$, which are the nonconstant orbits in $\Sigma\times[1,1+\epsilon_n]$ and $\Sigma\times[1+3\epsilon_n,1+4\epsilon_n]$, respectively. Due to the asphericity of $M$, these orbits can be assigned integer-valued Conley--Zehnder indices. We will denote by $\cP^j_\bullet(H_n)$ the subset of $\cP_\bullet(H_n)$ consisting of orbits of index $j$.

Note that to each $x \in \cP_\downarrow(H_n)\cup\cP_\uparrow(H_n)$ we can uniquely assign a circle of Reeb orbits of $\Sigma$ from which $x$ originates, and that the Conley--Zehnder index of a Reeb orbit $\gamma$ in such a circle is a linear function of the Conley--Zehnder index of $x$ with universally bounded coefficients. Moreover, $\cA_{H_n}(x)$ is very close to $\cA_{\wt H_n}(\gamma\times\{r\})$, where $f_n'(r)$ is the period of $\gamma$.

Thus for $j\in\Z$, the set of Conley--Zehnder indices of Reeb orbits corresponding to $\cP_\downarrow^j(H_n)\cup\cP_\uparrow^j(H_n)$ is contained in a fixed interval independent of $n$. The same is then true of the set of periods of such Reeb orbits, due to the index-boundedness assumption. Therefore there are constants $\beta_1(j),\beta_2(j),\beta_3(j)>0$, independent of $n$, such that
$$\cA_{H_n}(\cP_\downarrow^j(H_n)) \subset [-\beta_1(j),0]\,,\qquad \cA_{H_n}(\cP_\uparrow^j(H_n)) \subset [n-\beta_2(j),n+\beta_3(j)]\,.$$
Here we have implicitly used the fact that $\epsilon_na_n$ is a bounded sequence.

\begin{figure}[h]
	\centering
	\includegraphics[scale=1]{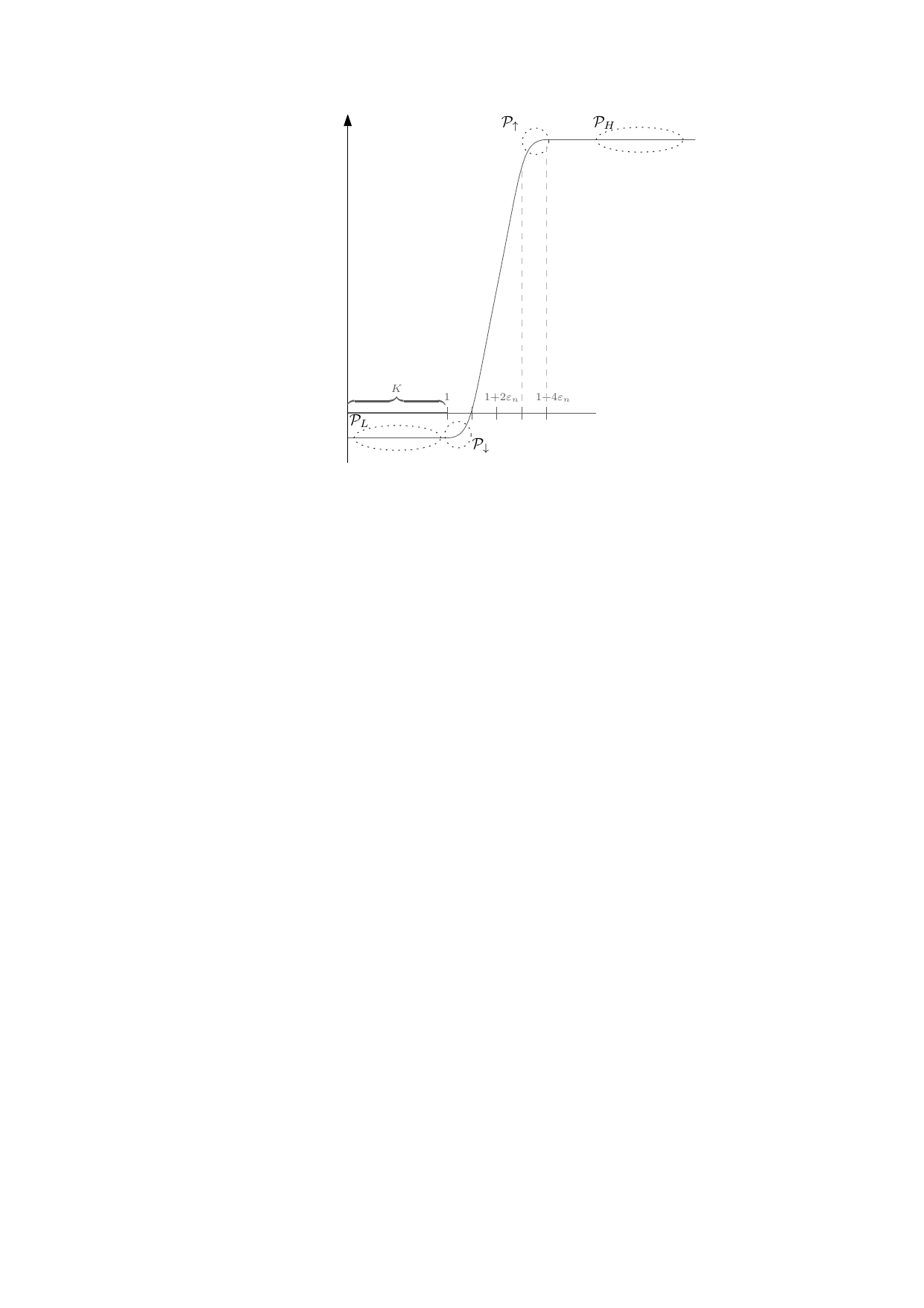}
	\caption{\small{The Hamiltonian $H_n$ and the four distinguished sets of 1-periodic orbits
	}}
	\label{fig:HamiltonianGreyN}
\end{figure}

Fix $j \in \Z$. We will construct an isomorphism $SH^j(\wh K;\Lambda) \cong SH^j(K;\Lambda)$.
\begin{notation}
Given a $\Z$-graded module $A^*$, we let $A^{\sim j}$ be the graded module such that $(A^{\sim j})^i = A^i$ for $i=j-1,j,j+1$, and $(A^{\sim j})^i=0$ otherwise. If $A^*$ is a cochain complex, we endow $A^{\sim j}$ with the obvious differentials. Note that $H^j(A^*) = H^j(A^{\sim j})$. If $S^* =\bigsqcup_{i\in\Z}S^i$ is a $\Z$-graded set, we let $S^{\sim j}=\bigsqcup_{i\in\{j-1,j,j+1\}}S^i$.
\end{notation}
\noindent It follows that in order to compute $SH^j(K;\Lambda)$, it is enough to consider the complexes $CF^{\sim j}(H_n)$. Denote
$$\cP_-(H_n)=\cP_{\mathrm L}(H_n)\cup\cP_\downarrow(H_n)\,,\qquad \cP_+(H_n)=\cP_{\mathrm H}(H_n)\cup\cP_\uparrow(H_n)\,,$$
and similarly for the corresponding sets of orbits of fixed indices. We let $CF^*_+(H_n)$ be the submodule of $CF^*(H_n)$ generated by the subset $\cP_+(H_n)\subset \cP^\circ(H_n)$, and put $CF^*_-(H_n)=CF^*(H_n)/CF^*_+(H_n)$.

The above discussion implies that there is $n_0$ such that for all $n\geq n_0$:
\begin{itemize}
  \item All the actions $\cA_{H_n}(\cP_+^{\sim j}(H_n))$ are strictly above all the actions $\cA_{H_n}(\cP_-^{\sim j}(H_n))$, and since the Floer differential does not decrease actions, it follows that $CF^{\sim j}_+(H_n)$ is a \emph{subcomplex} of $CF^{\sim j}(H_n)$, and that $CF^{\sim j}_-(H_n) = CF^{\sim j}(H_n)/CF^{\sim j}_+(H_n)$ inherits the quotient differential;
  \item All the actions $\cA_{H_n}(\cP_+^{\sim j}(H_n))$ lie strictly above all the actions $\cA_{H_{n+1}}(\cP_-^{\sim j}(H_{n+1}))$, and since the homotopy from $H_n$ to $H_{n+1}$ is nondecreasing, the corresponding continuation morphisms do not decrease actions, and thus map $CF^{\sim j}_+(H_n)$ into $CF^{\sim j}_+(H_{n+1})$.
\end{itemize}

We thus arrive at the following commutative diagram of cochain complexes and cochain maps, where the rows are short exact sequences, the middle vertical arrows are the continuation maps, and where the left and right ones are induced from them:
\[
\xymatrix{
  0 \ar[r] &  CF^{\sim j}_+(H_n) \ar[d] \ar[r]^-{i} &  CF^{\sim j}(H_n) \ar[d] \ar[r]^-{p} &  CF_-^{\sim j}(H_n)  \ar[d] \ar[r] & 0 \\
  0 \ar[r] &  CF^{\sim j}_+(H_{n+1}) \ar[d] \ar[r]^-{i} &  CF^{\sim j}(H_{n+1}) \ar[d] \ar[r]^-{p} &  CF_-^{\sim j}(H_{n+1}) \ar[d]  \ar[r] & 0  \\
   & \vdots & \vdots & \vdots &
}
\]
One can verify from the definitions that taking telescope of the $1$-rays involved preserves the exactness, namely we obtain the following short exact sequence:

\[
\xymatrix{
  0 \ar[r] & \operatorname*{tel}\limits_n CF^{\sim j}_+(H_n) \ar[r]^-{i} & \operatorname*{tel}\limits_n  CF^{\sim j}(H_n)  \ar[r]^-{p} & \operatorname*{tel}\limits_n  CF_-^{\sim j}(H_n) \ar[r] & 0
}
\]
Since the rightmost module is free, thus in particular flat, Lemma \ref{lem:completion_exact_right_mod_flat} implies that the exactness is preserved by the completion:
\begin{equation}\label{eq:exact_seq_completed_tel}
\xymatrix{
  0 \ar[r] & \widehat{\operatorname*{tel}\limits_n}\,CF^{\sim j}_+(H_n) \ar[r]^-{i} & \widehat{\operatorname*{tel}\limits_n}\,CF^{\sim j}(H_n) \ar[r]^-{p} &  \widehat{\operatorname*{tel}\limits_n}\,CF_-^{\sim j}(H_n) \ar[r] & 0
}\,.
\end{equation}
Note that if we replace the sequence $(H_n)_{n\geq n_0}$ by any subsequence $(H_{n_k})_k$, this whole algebraic argument works \emph{mutatis mutandis}. In particular, we can choose a subsequence in such a way that the aforementioned action separation guarantees the existence of some $c>0$ such that the continuation morphisms map $CF^{\sim j}_+(H_{n_k})$ into $T^cCF^{\sim j}_+(H_{n_{k+1}})$. At this point we denote this subsequence again by $(H_n)_n$ by abuse of notation.

Now we will use the following fact, whose proof is an immediate verification from the definitions. We will also use this in the sequel.
\begin{lemma}\label{lem:vanishing_in_completion}
	Consider a $1$-ray of modules $C_i$ over $\Lambda_{\geq0}$,
	\[
	  C_1 \xrightarrow{\psi_1} C_2 \xrightarrow{\psi_2} C_3 \xrightarrow{\psi_3} \cdots\, .
	\]
	Let $x_1\in C_1$ and denote its images in the modules $C_i$ by $x_i$, namely, $x_i = \psi_{i-1}(x_{i-1})$. Assume that there exists $c>0$, so that for all $i$, $\psi_i (x_i) \in  T^c\cdot C_{i+1}$.
	
	Then, the image of $x_1$ in $\widehat{\varinjlim C_i}$ is zero, and moreover, if for every $i$, we have $\psi_i (C_i) \subset  T^c\cdot C_{i+1}$, then $\widehat{\varinjlim C_i} = 0$. \qed
\end{lemma}
\noindent The lemma then implies that $\wh{\varinjlim}{}_n CF^{\sim j}_+(H_n) = 0$, in particular it is acyclic, and therefore so is the complex $\widehat{\operatorname*{tel}_n}\,CF^{\sim j}_+(H_n)$, since the two are quasi-isomorphic. It follows that the map $p$ in \eqref{eq:exact_seq_completed_tel} is a quasi-isomorphism.

Now, in order to compare $SH^*(K;\Lambda)$ and $SH^*(\wh K;\Lambda)$, we will define an asymptotically linear acceleration datum for $K\subset \wh K$, starting from $H_n$, as follows. Recall that in the definition of $\wt H_n$, equation \eqref{eq:def_H_n_tilde}, they are of the form $a_nr+b_n$ in $\Sigma\times[1+\epsilon_n,1+3\epsilon_n]$, and that $H_n$ is obtained by perturbing $\wt H_n$ \emph{outside} this region. We now let
$$\wh H_n(z) =\left\{\begin{array}{ll}H_n(z)\,, & z \in K \cup \Sigma\times [1,1+3\epsilon_n] \\ a_nr+b_n\,, & z \in \Sigma \times [1+\epsilon_n,\infty)\end{array}\right.$$
This clearly is an asymptotically linear acceleration datum for $K\subset \wh K$, thus by definition
$$SH^j(\wh K;\Lambda) = H^j\Big(\varinjlim_n CF^{\sim j}(\wh H_n)\otimes \Lambda\Big)\,.$$
Note the obvious identification $\cP^\circ(\wh H_n) = \cP_-(H_n)$. On the other hand, the differential of $CF_-^{\sim j}(H_n)$ counts Floer trajectories in $M$ connecting orbits in $\cP_-(H_n)$. Since our almost complex structures are cylindrical in the ``neck'' region $\Sigma\!\times\!(1,1+4\varepsilon_n)$, by the ``no escape" lemma, \cite[Lemma 3.4]{ganor2020floer} \cite[Lemma 2.2]{cieliebak2018symplectic}, all these Floer solutions are contained in $K$, and the same applies to the continuation solutions going between those generators. It follows that
\begin{equation}\label{eq:CF_H_n_identified_CF_wh_H_n}
  CF^{\sim j}(\wh H_n) = CF^{\sim j}_-(H_n)
\end{equation}
as complexes, and this identification commutes with continuation maps on both sides. We thus have the following isomorphisms:
\begin{multline}\label{eq:SH_K_hat_limit_CF_H_n}
  SH^j(\wh K;\Lambda) = H^j\big(\varinjlim_n CF^{\sim j}(\wh H_n)\otimes \Lambda\big)\\ = H^j\big(\varinjlim_n CF^{\sim j}_-(H_n)\otimes \Lambda\big) = H^j\big(\varinjlim_n CF^{\sim j}_-(H_n)\big)\otimes\Lambda\,.
\end{multline}

\noindent To finish the argument, we need the following
\begin{lemma}\label{lem:CF_minus_complete}
  The module $\varinjlim_n CF^{\sim j}_-(H_n)$ is complete.
\end{lemma}
\noindent Assuming this for a moment, and picking up at the end of \eqref{eq:SH_K_hat_limit_CF_H_n}, we have
\begin{multline*}
  \textstyle SH^j(\wh K;\Lambda) = H^j\big(\varinjlim_n CF^{\sim j}_-(H_n)\big)\otimes\Lambda = H^j\big(\wh\varinjlim_n CF^{\sim j}_-(H_n)\big)\otimes\Lambda\\
   = H^j\big(\wh\tel_n CF^{\sim j}_-(H_n)\big)\otimes\Lambda \stackrel{*}{=} H^j\big(\wh\tel_n CF^{\sim j}(H_n)\big)\otimes\Lambda = SH^j(K;\Lambda)\,,
\end{multline*}
where $\stackrel{*}{=}$ is thanks to the fact that $p$ in \eqref{eq:exact_seq_completed_tel} is a quasi-isomorphism.

This finishes the proof of item (i) of Theorem \ref{thm:Liouville_dom_idx_bdd_SH}, modulo Lemma \ref{lem:CF_minus_complete}, which we will now prove. First, we have the following algebraic result.
\begin{lemma}\label{lem:direct_limit_continuations_complete}
  Let $(\Phi_n)_n$ be a sequence of endomorphisms of the free module $\Lambda_{\geq0}^k$, such that for each $n$ we have
  $$\Phi_n = D_n + T^{c_n}B_n\,,$$
  where $D_n =\diag(T^{\epsilon_{n1}},\dots,T^{\epsilon_{nk}})$ with $\epsilon_{ni}>0$ for all $n,i$, such that for each $i$ we have $\sum_n \epsilon_{ni}<\infty$, where $c_n \geq \max_{1\leq i\leq k}\epsilon_{ni}$ for each $n$, and $B_n$ is a strictly upper triangular matrix with coefficients in $\Lambda_{\geq 0}$. Then the direct limit of
  $$\Lambda_{\geq0}^k\xrightarrow{\Phi_1}\Lambda_{\geq0}^k\xrightarrow{\Phi_2}\Lambda_{\geq0}^k\to\dots$$
  is isomorphic to $\Lambda_{>0}^k$, and in particular it is complete.
\end{lemma}
\begin{proof}
  The conditions on $D_n$ and $B_n$ imply that we can perform Gauss elimination on $\Phi_n$ with the result being $D_n$, which means that there is an invertible matrix $C_n$ such that $C_n\Phi_n=D_n$. This amounts to a change of basis in each copy of $\Lambda_{\geq0}^k$, such that the corresponding direct system is now
  $$\Lambda_{\geq0}^k\xrightarrow{D_1}\Lambda_{\geq0}^k\xrightarrow{D_2}\Lambda_{\geq0}^k\to\dots$$
  The basis changes yield an isomorphism between the two direct systems, and thus an isomorphism between their direct limits. It therefore remains to compute the limit of the latter system. Since each $D_n$ is diagonal, the system splits into the direct sum of the systems
  $$\Lambda_{\geq0}\xrightarrow{T^{\epsilon_{1i}}}\Lambda_{\geq0}\xrightarrow{T^{\epsilon_{2i}}}\Lambda_{\geq0}\to\dots\,,$$
  whose limit can be shown to be isomorphic to $\Lambda_{>0}$, using the remaining assumptions on the $\epsilon_{ni}$.
\end{proof}
\noindent For the sake of brevity, we will only sketch a proof of Lemma \ref{lem:CF_minus_complete}. The idea is that the sets $\cP^{\sim j}_\downarrow(H_n)$ stabilize for $n$ large enough, that is the orbits in them are obtained by perturbing the same set of Reeb circles. Moreover, we can order the elements of $\cP^{\sim j}_-(H_n)$ by decreasing action, which identifies $CF^{\sim j}_-(H_n) = \Lambda_{\geq0}^k$ for $k=|\cP^{\sim j}_-(H_n)|$. Let us outline an argument which shows that $\Phi_n \fc CF^{\sim j}_-(H_n) \to CF^{\sim j}_-(H_{n+1})$ is of the form specified in Lemma \ref{lem:direct_limit_continuations_complete}.

Since $H_n|_K,H_{n+1}|_K$ are negative multiples of the same Morse function, the continuation map between their critical points connects each one only to itself. Next, there are no continuation trajectories from $\cP^{\sim j}_L(H_n)$ to $\cP^{\sim j}_\downarrow(H_{n+1})$ due to action. The continuation trajectories from $\cP^{\sim j}_\downarrow(H_n)$ to $\cP^{\sim j}_L(H_{n+1})$ all have topological energy which is at least the minimal period of a Reeb orbit on $\Sigma$, perhaps minus a small correction. Finally, as we have indicated, there is a natural bijection $\cP^{\sim j}_\downarrow(H_n)\cong\cP^{\sim j}_\downarrow(H_{n+1})$. The matrix element of $\Phi_n$ counting continuation trajectories connecting two orbits which correspond to one another by this bijection can be shown, using action window arguments, such as \cite[Theorem 2.1]{oancea2004surveyfloertheory} to have coefficient $\pm1$ with weight $T^\delta$ for some small $\delta$. The rest of the matrix elements have weights $T^\mu$ with $\mu$ at least the smallest gap in the period spectrum of $\Sigma$ for a bounded set of Conley--Zehnder indices, and thus $\mu$ is at least some positive constant.

This shows that $\Phi_n$ is indeed as in the lemma, and thus $\varinjlim_n CF^{\sim j}_-(H_n)$ is complete. This finishes the proof of Lemma \ref{lem:CF_minus_complete}.

\subsubsection{The kernel of $\res^M_K$}\label{sss:pf_ker_res_M_K}

Here we prove item (ii) of Theorem \ref{thm:Liouville_dom_idx_bdd_SH}, that is that
   \[ \ker \left(H^*(M;\Lambda) \to H^*(K;\Lambda) \right) \subset \ker \res^M_{K}.\]

We retain the acceleration datum $(H_n)_n$ for $K$ specified in the previous subsection. We will compute the chain-level restriction map $\res^M_K$ using a suitable acceleration datum $(L_n)_n$  for $M$, and monotone homotopies from $L_n$ to  $H_n$. The Hamiltonians $L_n$ are assumed to satisfy the following:
\begin{enumerate}
	\item There is a $C^2$-small everywhere negative Morse function $F$ such that $L_n=s_n F$, where $s_n\to0$ a decreasing positive sequence;
	\item The values of $F$ on $K$ are strictly smaller than those on $K^c$;
	\item $\Sigma$ is a regular level set of $F$, and $dF$ and induces the outward coorientation;
	\item By the $C^2$-smallness assumption, the only $1$-periodic orbits of $L_n$ are the critical points of $F$;
	\item Their levels are in relation with those of $H_n$ as seen in Figure \ref{fig:HamiltonianGreyNContinuation}.
\end{enumerate}

\begin{figure}[h]
	\centering
	\includegraphics[scale=1]{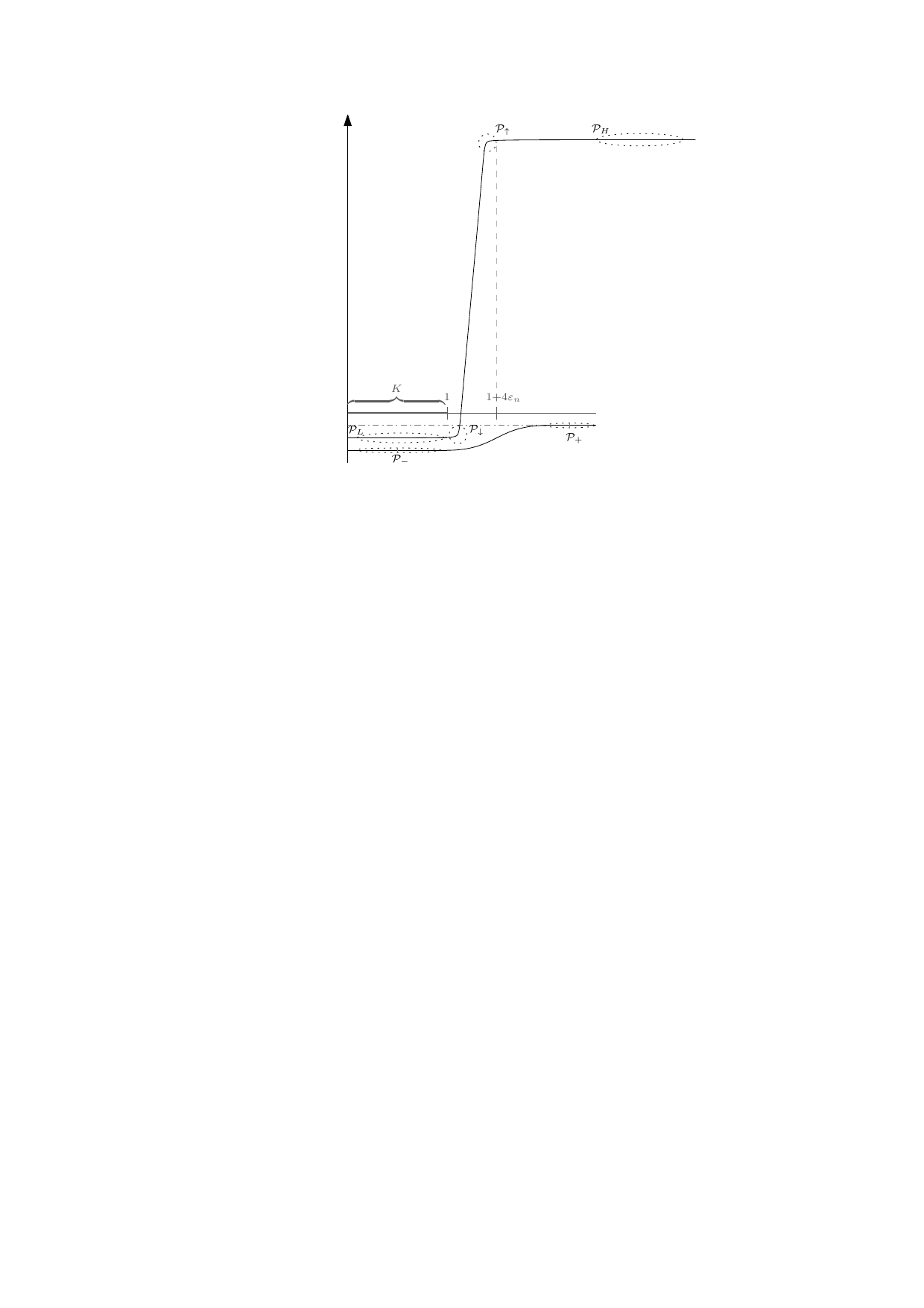}
	\caption{\small{The Hamiltonians $L_n$ and $H_n$ with the distinguished sets of 1-periodic orbits
	}}
	\label{fig:HamiltonianGreyNContinuation}
\end{figure}
We denote the critical points of $L_n$ that lie in $K$ by $\cP_-(L_n)$, while $\cP_+(L_n)$ stands for the set of critical points in $M\setminus K$. Fix $j \in \Z$ and define $CF^{\sim j}_+(L_n)\subset CF^{\sim j}(L_n)$ to be the subcomplex generated by $\cP^{\sim j}_+(L_n)$ and let $CF_-^{\sim j}(L_n) := CF^{\sim j}(L_n)/CF^{\sim j}_+(L_n)$ be the quotient complex. Note that by our assumption (v), the actions $L_n(\cP_+(L_n))$ are strictly larger than the actions $\cA_{H_n}(\cP_-(H_n))$, which means that the continuation map from $L_n$ to $H_n$ maps $CF^{\sim j}_+(L_n)$ into $CF^{\sim }_+(H_n)$. Arguing similarly to the end of Section \ref{sss:SH_K_equals_SH_completion_K}  we obtain the following homotopy-commutative diagram
\begin{equation}\label{eqn:CM_and_SC_diagram}
	\begin{gathered}
		\xymatrix{
		  0 \ar[r] &  \widehat{\operatorname*{tel}\limits_n}\,CF^{\sim j}_+(L_n) \ar[d] \ar[r]^-{i} &  	\widehat{\operatorname*{tel}\limits_n}\,CF^{\sim j}(L_n) \ar[d] \ar[r]^-{p} &  \widehat{\operatorname*{tel}\limits_n}\,CF_-^{\sim j}(L_n)  \ar[d] \ar[r] & 0 \\
		  0 \ar[r] &  \widehat{\operatorname*{tel}\limits_n}\,CF^{\sim j}_+(H_n)  \ar[r]^-{i} &  \widehat{\operatorname*{tel}\limits_n}\,CF^{\sim j}(H_n) \ar[r]^-{p} &  \widehat{\operatorname*{tel}\limits_n}\,CF_-^{\sim j}(H_n) \ar[r] & 0
		}
	\end{gathered}
\end{equation}
Let us identify the relevant complexes, homologies, and maps. Let us start with the map $p$ in the top row. Recall the calculation in \cite{varolgunes2018mayer}: the Floer differential on $CF^*(L_n)$ is simply the Morse differential of $F$ with suitable weights, which disappear in the limit, that is $\wh \varinjlim_n CF^*(L_n) \cong CM^*(F) \otimes \Lambda_{>0}$, where the latter complex is given the usual unweighted Morse differential, and thus computes $H^*(M)\otimes\Lambda_{>0}$. Similarly, $\wh \varinjlim_n CF^*_-(L_n) \cong CM^*(F|_K)\otimes\Lambda_{>0}$, again with the usual Morse differential. This latter complex computes $H^*(K)\otimes\Lambda_{>0}$.

We have a morphism of direct systems from $(CF^*(L_n))_n$ to $(CF^*_-(L_n))_n$ given by termwise quotient maps. The induced morphism in the limit is, using the identifications we have just described, $H^*(M)\otimes\Lambda_{>0} \to H^*(K)\otimes\Lambda_{>0}$, the restriction map on singular cohomology. This morphism of direct systems fits into a commutative square
$$\xymatrix{\wh\tel_n CF^*(L_n) \ar[r]^{p} \ar[d] & \wh\tel_n CF^*_-(L_n) \ar[d] \\ \wh\varinjlim_n CF^*(L_n) \ar[r]& \wh\varinjlim_n CF^*_-(L_n)}$$
where the vertical maps are the canonical quasi-isomorphisms as in \cite[Lemma 2.3.7]{varolgunes2018mayer}. It follows that the map on $j$-th cohomology, induced by $p$ in the top row of \eqref{eqn:CM_and_SC_diagram}, is the singular restriction map.

Similarly to the considerations of Section \ref{sss:SH_K_equals_SH_completion_K}, the complex $\wh\tel_n\,CF^{\sim j}_+(H_n)$ is acyclic. Therefore, taking the $j$-th cohomology of the right square of \eqref{eqn:CM_and_SC_diagram} we finally arrive at the commutative diagram
$$\xymatrix{SH^j(M) = H^j(M)\otimes\Lambda_{>0} \ar[r]^-{p_*} \ar[d]_{\res^M_K} & H^j(K)\otimes\Lambda_{>0}\ar[d] \\ SH^j(K) \ar[r]^-{\cong} & \cdot}$$
where $p_*$ has just been shown to equal the restriction on singular cohomology. It follows from this diagram that $\ker p^* \subset \ker \res^M_K$, and the assertion (ii) of Theorem \ref{thm:Liouville_dom_idx_bdd_SH} follows from this upon tensoring with $\Lambda$. This completes the proof of Theorem \ref{thm:Liouville_dom_idx_bdd_SH}.

\subsection{Proof of Theorem \ref{thm:domain_no_contr_orbits_bdry_SH_equal_H}}\label{ss:pf_SH_no_contr_orbits}

Recall that the theorem states the following: assume that $K$ is a region with boundary $\Sigma$ whose neighborhood is the image of a smooth embedding $(-\epsilon,\epsilon)\times \Sigma \hookrightarrow M$ extending $\id \fc \Sigma = \{0\}\times \Sigma \to M$, such that no $\{\rho\}\times \Sigma$ carries closed characteristics contractible in $M$; then $SH^*(K;\Lambda) = H^*(K;\Lambda)$ and $\res^M_K \fc SH^*(M;\Lambda) \to SH^*(K;\Lambda)$ coincides with the restriction on singular cohomology $H^*(M;\Lambda)\to H^*(K;\Lambda)$.

We pick acceleration data $(H_n)_n$, $(L_n)_n$ for $K, M$, respectively, as in Section \ref{ss:pf_thm_SH_Liouv_domains}, only this time $r$ is the first coordinate of the aforementioned embedding $(-\epsilon,\epsilon)\times\Sigma\hookrightarrow M$. In addition, we require that $L_n$ and $H_n$ coincide on $K$. The absence of closed contractible characteristics near $\Sigma$ means that $\cP^\circ(H_n) = \cP_{\mathrm L}(H_n)\cup\cP_{\mathrm H}(H_n)$ and $\cP_{\downarrow,\uparrow}(H_n) = \varnothing$, where we use the notations from Section \ref{ss:pf_thm_SH_Liouv_domains}. Note as well that $\cP_-(L_n) = \cP_-(H_n)$. See Figure \ref{fig:HamiltonianGreyContinuationNoContractible}.

\begin{figure}[h]
	\centering
	\includegraphics[scale=1]{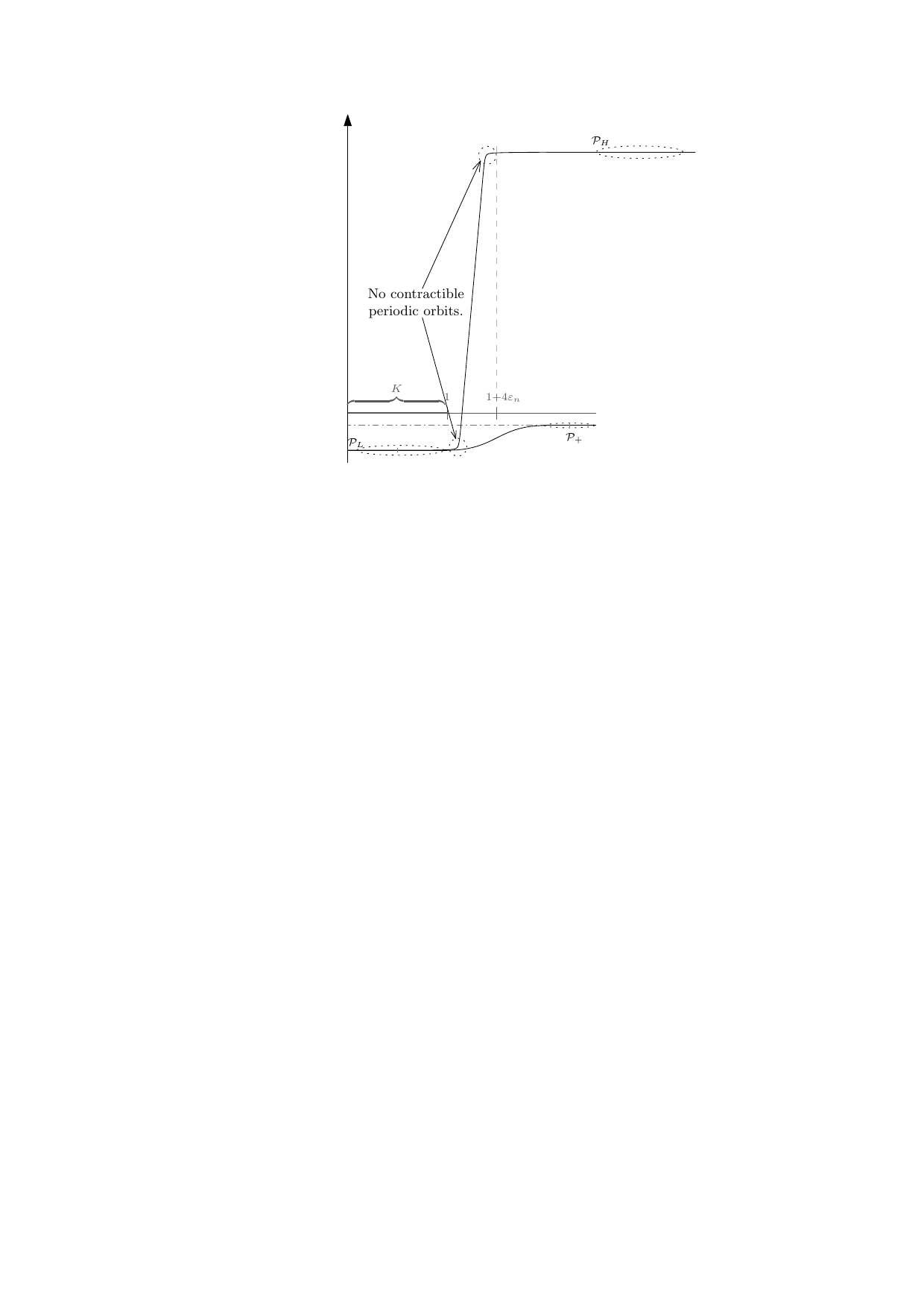}
	\caption{\small{The Hamiltonians $L_n$ and $H_n$ with the distinguished sets of 1-periodic orbits
	}}
	\label{fig:HamiltonianGreyContinuationNoContractible}
\end{figure}

As in Section \ref{sss:pf_ker_res_M_K}, we obtain two short exact sequences with morphisms between them:
\[
\xymatrix{
  0 \ar[r] &  \widehat{\operatorname*{tel}\limits_n}\,CF_+^*(L_n) \ar[d] \ar[r]^-{i} &  \widehat{\operatorname*{tel}\limits_n}\,CF^*(L_n) \ar[d] \ar[r]^-{p} &  \widehat{\operatorname*{tel}\limits_n}\,CF_-^*(L_n)  \ar[d]^{\Phi^H_L} \ar[r] & 0 \\
  0 \ar[r] &  \widehat{\operatorname*{tel}\limits_n}\,CF_+^*(H_n)  \ar[r]^-{i} &  \widehat{\operatorname*{tel}\limits_n}\,CF^*(H_n) \ar[r]^-{p} &  \widehat{\operatorname*{tel}\limits_n}\,CF_-^*(H_n) \ar[r] & 0 \, .
}
\]
Note that we do not use truncated complexes here due to the absence of nonconstant contractible orbits. As before, $\widehat{\operatorname*{tel}_n}\,CF_+^*(H_n)$ is acyclic, and the map $p$ in the top row computes the restriction on singular cohomology $H^*(M)\otimes \Lambda_{>0}\to H^*(K)\otimes \Lambda_{>0}$. What is different is the map $\Phi_L^H$ in this diagram, which we will now show to be an isomorphism. Fix $n$ and consider the map $\Phi_n^-\fc CF_-^*(L_n) \to CF_-^*(H_n)$. Since $L_n\equiv H_n$ on $K$, the constant Floer solution sitting at a generator $x \in \cP_{\mathrm L}(L_n)$ contributes to the continuation map with coefficient $1$ and is the only energy zero connecting trajectory, therefore $\Phi_n^-(x) = x+$ terms with positive powers of $T$. This means that the reduction of $\Phi_n^-$ to the residue field of $\Lambda_{\geq0}$ is just the identity, in particular invertible, which in turn implies that $\Phi_n^-$ is invertible. Therefore the $2$-ray consisting of the complexes $CF_-^*(L_n),CF_-^*(H_n)$ has the property that the maps in the finite direction are all isomorphisms. It is not hard to show that the induced map $\tel_n CF_-^*(L_n) \to \tel_n CF_-^*(H_n)$ is then an isomorphism, therefore so is the map on completions $\Phi_L^H\fc\wh{\tel}_n CF_-^*(L_n) \to \wh{\tel}_n CF_-^*(H_n)$.

Taking the homology of the right square, we arrive at the following commutative diagram:
$$\xymatrix{SH^*(M) = H^*(M)\otimes\Lambda_{>0} \ar[r]^-{p_*} \ar[d]^{\res^M_K} & H^*(K)\otimes\Lambda_{>0} \ar[d]^{\cong} \\ SH^*(K) \ar[r]^-{\cong} & \cdot}$$
Composing the right arrow and the inverse of the bottom arrow, we conclude that $SH^*(K)\cong H^*(K)\otimes\Lambda_{>0}$ and that under this isomorphism the restriction maps on singular and symplectic cohomologies coincide. In particular $\ker \res^M_K = \ker p_*$. This finishes the proof of Theorem \ref{thm:domain_no_contr_orbits_bdry_SH_equal_H}.

\subsection{Proof of Theorem \ref{thm:heavy_implies_SH_heavy}}\label{ss:heavy_implies_SH_heavy}

Recall that the theorem states that if $(M,\omega)$ is symplectically aspherical, $K\subset M$ is a compact heavy set, and there is a sequence of contact-type regions $W_i$ with incompressible index-bounded boundary, all containing $K$ in the interior, such that $K = \bigcap_i W_i$, then for all $i$, $[\Vol] \in \ker \res^M_{\ol{W_i^c}}$, and in particular $[\Vol] \in \tau(K) = \bigcap_i\ker \res^M_{\ol{W_i^c}}$, therefore $K$ is SH-heavy.

For the rest of the proof we let $W$ be one of the $W_i$. Varolg\"une\c s proves in \cite{varolgunes2018mayer} that $SH^*(M) = H^*(M)\otimes\Lambda_{>0}$. We will prove the following
\begin{claim}\label{claim:T_mu_Vol_in_ker_res} There exists $\mu > 0$ such that
$$T^{\mu}[\Vol] \in \ker \big(SH^*(M) \to SH^*(\ol{W^c})\big)\,.$$
\end{claim}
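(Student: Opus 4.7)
The plan is to realize the restriction $\res^M_{\overline{W^c}}$ via continuation into a convenient acceleration datum for $\overline{W^c}$ and then use the heaviness of $K$ to show that, after multiplication by $T^\mu$, the continuation image of $[\Vol]$ becomes cohomologous to arbitrarily $T$-divisible cocycles; Lemma~\ref{lem:vanishing_in_completion} will then do the rest.

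First I would set up the acceleration datum. Mimicking the construction of Section~\ref{ss:pf_thm_SH_Liouv_domains} but with the roles of the inside and outside of $W$ swapped, I take $(H_i)_i$ to be nondegenerate perturbations of autonomous Hamiltonians which are $C^2$-small negative Morse functions on $\overline{W^c}$, grow linearly through a collar of $\partial W$ with non-characteristic slopes (using the index-bounded contact-type hypothesis), and equal a large positive constant $\kappa_i\to\infty$ on the bulk of $\Int W\supset K$. In parallel I pick a $C^2$-small negative Morse Hamiltonian $f$ on $M$ whose unique minimum $x_{\min}$ lies in $K$; by equation~\eqref{eqn:SCofH_and_classic_morse} this computes $SH^*(M)$, and under the PSS identification $[\Vol]$ is represented by the cycle $x_{\min}$. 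The restriction map is then induced at the chain level by the continuation morphisms $\Phi_i\colon CF^*(f)\to CF^*(H_i)$. The $1$-periodic orbits of $H_i$ split by action into three groups: generators on $\overline{W^c}$ with action $O(1)$, Reeb-type orbits in the collar with action uniformly bounded in $i$ thanks to index-boundedness, and critical points deep inside $W$ with action $\approx -\kappa_i$.

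The decisive step is to use heaviness to bound the $T$-adic valuation of the class of $\Phi_i(x_{\min})\in HF^*(H_i)$. Heaviness of $K$ gives $\zeta(H_i)\geq \inf_K H_i\sim \kappa_i$, hence $c(1,kH_i)\geq k\kappa_i - o(k)$ for large $k$, which upgrades via symplectic asphericity and index-boundedness into a chain-level statement: for $i$ large there is a cocycle representative $e_i$ of the unit in $HF^*(H_i)$ of $T$-valuation at least $\kappa_i/2$. Combining this with the $QH^*(M)$-module structure on relative symplectic cohomology and the tautology $1\ast [\Vol]=[\Vol]$ (compatible with continuation), a careful pair-of-pants action analysis shows that $\Phi_i(x_{\min})$ is cohomologous in $CF^*(H_i)$ to a cycle of $T$-adic valuation $\geq \mu_i$, with $\mu_i\to\infty$. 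Fixing any $\mu>0$ and taking $i$ with $\mu_i\geq \mu$, Lemma~\ref{lem:vanishing_in_completion} applied to the $1$-ray $(CF^*(H_i))_i$ forces the image of $T^\mu[\Vol]$ in $SH^*(\overline{W^c})$ to vanish, proving the claim.

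The main obstacle will be the upgrade from the asymptotic spectral bound to a non-asymptotic chain-level valuation estimate on the continuation image of the \emph{volume} class, rather than of the unit class. This requires a careful implementation of the $QH^*(M)$-module action at the chain level, compatibility of this action with the continuation $\Phi_i$, and uniform control on the collar contributions as $i$ varies, all of which rely crucially on the incompressible index-bounded contact-type hypothesis on $\partial W$.
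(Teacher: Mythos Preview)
Your skeleton---build an acceleration datum for $\overline{W^c}$ large on $K$, extract a valuation bound from heaviness, and invoke Lemma~\ref{lem:vanishing_in_completion}---matches the paper's, but the ``decisive step'' is where things go wrong. First, a small but telling slip: $[\Vol]\in H^{2n}$ is represented by a \emph{maximum} of the Morse function, not the minimum (the minimum represents the unit). More seriously, the detour through the unit class and pair-of-pants is both unnecessary and unjustified. In this paper $\zeta(H)=\lim_k c^\vee([\Vol],kH)/k$, so heaviness says \emph{directly} that $c^\vee([\Vol],\cdot)$ is large on Hamiltonians large on $K$; your implication ``$\zeta(H_i)\geq\kappa_i$ hence $c(1,kH_i)\geq k\kappa_i-o(k)$'' simply does not follow, since $\zeta$ is built from $[\Vol]$, not from $1$. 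And even granting a high-valuation representative of the unit, the ``pair-of-pants action analysis'' you invoke to transfer this to $[\Vol]$ is a black box: action estimates for products typically give only lower bounds on the valuation of the output, not the kind of control you need.

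The paper bypasses all of this. One fixes $L$ equal to the maximal period of contractible Reeb orbits on $\partial W$ of index in $\{2n-1,2n,2n+1\}$ (finite by index-boundedness), uses heaviness once to produce $F$ with $F|_{\overline{W^c}}<0$ and $c^\vee([\Vol],F)>L$, and takes $H_0\geq F$. By definition of $c^\vee$, $[\Vol]\in HF^*_{\cl}(H_0)$ then has a representative supported on orbits of action $>L$, and the choice of $L$ forces these to be the ``upper'' orbits sitting over $\Int W$. Translated to the weighted complex this says $T^{\mu}\Phi_{G_0}^{H_0}(q)$ is cohomologous, for a \emph{fixed} $\mu$, to a combination of upper orbits (Lemma~\ref{lem:exists_mu_cohomologous}). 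Since the upper orbits' actions increase by a uniform $c>0$ along the ray, Lemma~\ref{lem:vanishing_in_completion} kills them at the chain level. Note this last point also differs from your endgame: you aim for ``$T^{\mu_i}$-divisible in each $HF^*(H_i)$ with $\mu_i\to\infty$'', but passing from that to vanishing in $H\big(\widehat{\varinjlim}\,CF^*(H_i)\big)$ is not automatic and needs precisely the chain-level statement the paper proves.
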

\noindent That $[\Vol] \in \ker \res^M_{\ol{W^c}}$ follows from this upon tensoring with $\Lambda$.

Let us recall what it means for a set to be heavy. To this end we note that, since $M$ is symplectically aspherical, given any ground field $\F$, the usual unweighted Floer complex $CF^*_{\uw}(H;\F)$ of a nondegenerate Hamiltonian $H$ on $M$ can be defined as the $\F$-vector space with basis $\cP^\circ(H)$, and that it can be given a $\Z$-grading. The Floer differential $d_{\uw}$ is given by the usual formula, see \eqref{eqn:Floer_diff_unweighted}. We let $HF_{\uw}^*(H;\F)$ be the cohomology of $(CF^*_{\uw}(H;\F),d_{\uw})$. Correspondingly, we have the $\Z$-graded quantum cohomology $QH^*(M;\F)$, which is isomorphic to the singular cohomology $H^*(M;\F)$, in this case both additively, and as an algebra. We have the PSS isomorphism $\PSS \fc HF^*_{\text{\rm uw}}(H;\F) \to QH^*(M;\F) = H^*(M;\F)$, see\cite{PSS}.

Since the differential increases the action, for each $a \in \R$, the subspace $CF_{\text{\rm uw},\geq a}^*(H;\F)$ generated by $x\in\cP^\circ(H)$ with $\cA_H(x)\geq a$ is a subcomplex. Let $HF^*_{\text{\rm uw},\geq a}(H;\F)$ be the cohomology of $CF^*_{\text{\rm uw},\geq a}(H;\F)$ and let $j_a^* \fc HF^*_{\text{\rm uw},\geq a}(H;\F) \to HF^*_{\text{\rm uw}}(H;\F)$ be the morphism induced by the inclusion. We have the \emph{cohomological spectral invariants}
$$c^\vee(A,H) = \sup\{a \in \R\,|\,A \in \im (\PSS\circ j^*_a)\}\quad\text{for }A\in QH^*(M;\F)\,.$$
The corresponding partial symplectic quasi-state $\zeta \fc C^\infty(M) \to \R$ is given by
$$\zeta(H) = \lim_{k\to\infty}\frac{c^\vee([\Vol],kH)}{k}\,,$$
where $[\Vol] \in H^{2n}(M;\F)$ is the volume class. The original definition in \cite{entov2009rigid} used homological spectral invariants $c([M],\cdot)$ relative to the fundamental class $[M] \in H_{2n}(M;\F)$, but $c([M],\cdot)=c^\vee([\Vol],\cdot)$ thanks to the duality formula, see for instance \cite[Section 4.2]{leclercq2018spectral}. For $K$ to be heavy means that for each $F \in C^\infty(M)$ we have
$$\zeta(F) \geq \min_K F\,.$$

For us, the crucial consequence of the assumption that $K$ is heavy is the following
\begin{lemma}\label{lem:heavy_implies_high_spectral_invts}
  For any $L>0$ there exists $F \in C^\infty(M)$ such that $F|_{\ol{W^c}} < 0$ and such that $c^\vee([\Vol],F) > L$.
\end{lemma}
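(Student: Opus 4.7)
The plan is to reduce the lemma to a direct application of the heaviness of $K$, exploiting the strict containment $K \subset \Int W$.

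First, since $K \subset \Int W$, the sets $K$ and $\ol{W^c}$ are disjoint compact subsets of $M$. By a standard smooth Urysohn-type construction I would choose $G \in C^\infty(M)$ such that $G \equiv 1$ on a neighborhood of $K$ and $G \equiv -1$ on a neighborhood of $\ol{W^c}$ (the precise values are immaterial; what I need is $G|_{\ol{W^c}}<0$ and $\min_K G > 0$).

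Next, I invoke heaviness of $K$: by definition,
$$\zeta(G) = \lim_{k \to \infty}\frac{c^\vee([\Vol], kG)}{k} \geq \min_K G = 1\,.$$
In particular, $c^\vee([\Vol], kG) \geq k/2$ for all $k$ sufficiently large, so the sequence $\{c^\vee([\Vol], kG)\}_k$ is unbounded above.

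Finally, given any $L>0$, I pick $k$ large enough so that $c^\vee([\Vol], kG) > L$, and set $F = kG$. Since $k > 0$, we have $F|_{\ol{W^c}} = k \cdot G|_{\ol{W^c}} < 0$, as required, while $c^\vee([\Vol], F) > L$ by construction. No step here is really a substantive obstacle; the only minor point to be careful about is that we are using the autonomous-Hamiltonian version of the spectral invariant and the duality $c([M],\cdot) = c^\vee([\Vol],\cdot)$ implicit in the definition of $\zeta$, but both are standard.
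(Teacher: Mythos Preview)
Your proof is correct and follows essentially the same approach as the paper: pick a smooth function negative on $\ol{W^c}$ and positive on $K$, use heaviness to conclude that $c^\vee([\Vol],kG)/k$ has a positive limit, and then take a sufficiently large multiple. The paper's version is slightly terser (it does not specify the values $\pm 1$ or the explicit bound $k/2$), but the argument is the same.
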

\begin{proof}
  Let $F_0 \in C^\infty(M)$ satisfy $F_0|_{\ol{W^c}} < 0$ and $F_0|_K > 0$. Since $K$ is heavy, we have
  $$0<\min_K F_0 \leq \zeta(F_0) = \lim_{k\to\infty}\frac{c^\vee([\Vol],kF_0)}{k}\,.$$
  In particular there is $k_0$ such that $c^\vee([\Vol],k_0F_0) > L$. Now put $F=k_0F_0$.
\end{proof}

Let us identify a neighborhood of the boundary $\Sigma = \partial W$ with the piece $\Sigma\times(1-\epsilon,1+\epsilon)_r$ of the symplectization of $\Sigma$, so that $r\partial_r$ is the Liouville vector field.

For real numbers $e,E$ such that $e<0$, $E>0$, let us call a nondegenerate Hamiltonian $H$ on $M$ \tb{$(E,e)$-admissible} if it is a sufficiently small perturbation of a smooth function $\wt H \fc M \to \R$ which satisfies: $\wt H|_{\ol{W^c}}\equiv e$, $\wt H|_{W\setminus (\Sigma\times(1-\epsilon,1])}\equiv E$, $\wt H|_{\Sigma\times(1-\epsilon,1]} = f\circ r$, where $f \fc [1-\epsilon,1] \to \R$ is a smooth function such that $f(1-\epsilon)=E$, $f(1)=e$, such that $f$ is strictly concave on $[1-\epsilon,1-2\epsilon/3]$, strictly convex on $[1-\epsilon/3,1]$, and $f(r)=ar+b$ for some constants $a,b$ for $r\in[1-2\epsilon/3,1-\epsilon/3]$, where the slope $a$ is noncharacteristic, that is $|a|$ is not the length of a contractible Reeb orbit of $\Sigma$. By ``sufficiently small'' here we mean the following:
\begin{enumerate}
  \item on $\ol{W^c}$, $H$ is $e$ plus a $C^2$-small Morse function, so that its only contractible $1$-periodic orbits there are the critical points of the Morse function;
  \item on $W\setminus(\Sigma\times(1-\epsilon,1])$, $H$ is $E$ plus a $C^2$-small Morse function, so that its only contractible $1$-periodic orbits there are the critical points of the Morse function;
  \item on $\Sigma\times[1-2\epsilon/3,1-\epsilon/3]$, $H=\wt H$, so that in particular $H$ has no contractible $1$-periodic orbits there;
  \item on $\Sigma\times[1-\epsilon,1-2\epsilon/3]$, each contractible $1$-periodic orbit $x$ of $H$ of Conley--Zehnder index $2n$ is close enough to an orbit of $\wt H$ of the form $\gamma\times\{r\}$, where $\gamma$ is a suitable reparametrization (in the negative direction!) of a closed Reeb orbit of $\Sigma$; in particular we require the Conley--Zenhder index of this reparametrized Reeb orbit to be universally bounded, which implies that its period is universally bounded, and we require $\cA_H(x)$ to be close to the action $\cA_{\wt H}(\gamma\times\{r\})=f(r)-rf'(r)$ up to an error of $1$;
  \item on $\Sigma\times[1-\epsilon/3,1]$, the same is required of $H$ as in the previous item.
\end{enumerate}
\noindent We invite the reader to revisit the arguments in Section \ref{ss:pf_thm_SH_Liouv_domains} in order to better understand the logic here. We refer to the contractible $1$-periodic orbits of $H$ in $W\setminus(\Sigma\times[1-2\epsilon/3,1])$ as the ``upper orbits,'' while to the rest as the ``lower orbits.''

The point of admissible Hamiltonians is as follows.
\begin{lemma}
  There exists $L>0$ so that whenever $H$ is an $(E,e)$-admissible Hamiltonian, then:
  \begin{enumerate}
    \item the actions of its upper orbits of index $2n$ in $\Sigma\times[1-\epsilon,1-2\epsilon/3]$ are at least $E-1$;
    \item the actions of its lower orbits of index $2n$ in $\Sigma\times[1-2\epsilon/3,1]$ are  at most $L$.
  \end{enumerate}
\end{lemma}
\begin{proof}
  Item (i) is a consequence of assumptions (ii) and (iv) in the definition of admissibility, together with the fact that the action of an orbit of $H$ of the form $\gamma\times\{r\}$ is $f(r)-rf'(r)$, the $y$-intercept of the tangent to the graph of $f$ at $r$. This number can easily be seen to be at least $E$, and our assumption on the $1$-closeness of $H$- and $\wt H$-actions, as in assumption (iv).

  Item (ii) is similarly a consequence of assumptions (i) and (iii). The action bound $L$ comes from the universal bound on the lengths of Reeb orbits which correspond to orbits of $H$ of Conley--Zehnder index $2n$.
\end{proof}

Fix $L$ as in the lemma, and fix $F$ as in Lemma \ref{lem:heavy_implies_high_spectral_invts} for this $L$. Then there exists an acceleration datum $(H_i)_{i\geq 0}$ for $\ol {W^c}$ such that for all $i$ we have:
\begin{itemize}
  \item $H_i \geq F$;
  \item each $H_i$ is $(E_i,e_i)$-admissible, where $e_i\to0$ and $E_i \to \infty$;
  \item all the upper orbits of $H_i$ of index $2n$ have actions $>L$;
  \item There exists $c>0$ such that the minimal action of an upper orbit of $H_{i+1}$ of index $2n$ is at least $c+$ the maximal action of an upper orbit of $H_i$ of index $2n$.
\end{itemize}
\noindent See Figure \ref{fig:HeavySHheavyActions} for an illustration. Note that as a consequence of admissibility, all the lower orbits of $H_i$ of index $2n$ have actions $<L$.

\begin{figure}[h]
	\centering
	\includegraphics[scale=1]{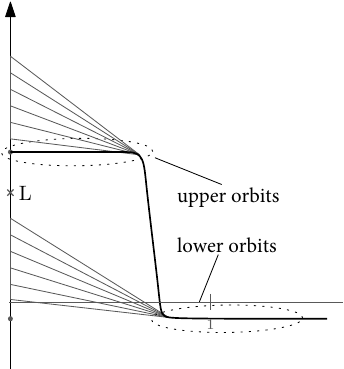}
	\caption{\small{The action separation between the lower and upper orbits is demonstrated on the on the Hamiltonians $H_n$. The value $L$ is marked. The tangents whose $y$ intercepts are the actions are depicted in gray, as well as dots representing the actions of the Morse critical points.
	}}
	\label{fig:HeavySHheavyActions}
\end{figure}

This acceleration datum yields a $1$-ray of Floer complexes and continuation maps
$$CF^*(H_0)\xrightarrow{\Phi_{H_0}^{H_1}} CF^*(H_1)\to\dots$$
\begin{lemma}\label{lem:lin_comb_upper_orbits_dies_completed_limit}
  Let $y \in CF^{2n}(H_0)$ be a linear combination of the upper orbits of $H_0$. Then it is killed by the composition
  $$\textstyle CF^*(H_0) \xrightarrow{\Phi_{H_0}} \varinjlim_i CF^*(H_i) \xrightarrow{\wh\cdot} \wh\varinjlim_i CF^*(H_i)\,,$$
  where $\Phi_{H_0}$ is the natural map into the direct limit.
\end{lemma}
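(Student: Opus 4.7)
The strategy is to track how the image of $y$ propagates through the acceleration datum and exhibit divisibility by $T^{ic}$ at level $i$, which will suffice to kill $[y]$ in the completion.

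I would set $y_i\in CF^*(H_i)$ to be the image of $y$ under the composition $CF^*(H_0)\to CF^*(H_1)\to\cdots\to CF^*(H_i)$ of chain-level continuation maps, and verify that this represents $\Phi_{H_0}(y)$ in the direct limit $\varinjlim_i CF^*(H_i)$. The core claim, proved by induction on $i$, is: $y_i$ is a $\Lambda_{\geq 0}$-linear combination of \emph{upper} orbits of $H_i$ of Conley--Zehnder index $2n$, and all its coefficients lie in $T^{ic}\Lambda_{\geq 0}$.

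Two action-filtration facts power the induction. First, the weight of each term in the $T$-weighted continuation map from $CF^*(H_i)$ to $CF^*(H_{i+1})$ is $T^{\cA_{H_{i+1}}(x')-\cA_{H_i}(x)}$ with nonnegative exponent, by the monotone homotopy property, so a nontrivial contribution from $x$ to $x'$ forces $\cA_{H_{i+1}}(x')\geq\cA_{H_i}(x)$. Second, by the construction of the acceleration datum, at Conley--Zehnder index $2n$: (a) upper orbits of $H_i$ have action $>L$, while lower orbits of $H_{i+1}$ have action $<L$; and (b) every upper orbit of $H_{i+1}$ has action at least $c$ above the maximal action of an upper orbit of $H_i$. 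Fact (a) implies $y_{i+1}$ cannot have contributions at lower orbits of index $2n$ (such a contribution would require negative action change), so $y_{i+1}$ remains supported on upper orbits; fact (b), combined with the inductive hypothesis $y_i\in T^{ic}CF^*(H_i)$, yields $y_{i+1}\in T^{(i+1)c}CF^*(H_{i+1})$.

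To conclude, in $\varinjlim_i CF^*(H_i)$ one has $[y]=[y_i]$, and therefore $[y]\in T^{ic}\,\varinjlim_i CF^*(H_i)$ for every $i$; taking $i$ large enough that $ic>r$ shows that $[y]$ vanishes in the truncation $\varinjlim_i CF^*(H_i)\otimes\Lambda_{[0,r)}$, hence $[y]$ maps to zero in the completion $\wh{\varinjlim}_i CF^*(H_i)$. The main technical obstacle I anticipate is making the action bookkeeping rigorous within Varolgunes's cube-and-telescope formalism for continuation maps: one must check that the $1$-cells of the Hamiltonian ray can be chosen so that no parametrized Floer solution in the moduli spaces defining the chain maps produces anomalous contributions violating the action-filtration estimates above, and that the composition of consecutive continuation maps used here genuinely represents the map $\Phi_{H_0}$ into the direct limit.
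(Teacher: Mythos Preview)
Your proposal is correct and follows essentially the same route as the paper's proof: both track the images $y_i$ of $y$ under successive continuation maps, use the action separation to see that upper orbits stay upper and pick up a factor of $T^c$ at each step, and conclude that $\Phi_{H_0}(y)\in T^{jc}\varinjlim_i CF^*(H_i)$ for all $j$, hence dies in the completion. Your closing worry about the cube-and-telescope formalism is unnecessary here---the continuation maps are ordinary weighted Floer continuation maps for monotone homotopies, so the action-nondecreasing property is automatic and the paper's proof simply uses it without further comment.
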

\begin{proof}
  The map $\Phi_{H_0}$ factors as follows:
  $$CF^*(H_0) \xrightarrow{\Phi_{H_{j-1}}^{H_j}\circ\dots\circ\Phi_{H_0}^{H_1}} CF^*(H_j) \xrightarrow{\Phi_{H_j}}\varinjlim_i CF^*(H_i)\,,$$
  where $\Phi_{H_j}$ is again the natural map into the direct limit. By the action separation property of the acceleration datum $(H_i)_i$, $\Phi_{H_i}^{H_{i+1}} \fc CF^{2n}(H_i) \to CF^{2n}(H_{i+1})$ maps any linear combination $z$ of upper orbits of $H_i$ to a linear combination of upper orbits of $H_{i+1}$, since it is action-increasing. Thus $\Phi_{H_i}^{H_{i+1}}(z) \in T^c\cdot CF^{2n}(H_{i+1})$, where $c$ has been chosen in the above list of properties of $(H_i)_i$, and by induction we have
  $$(\Phi_{H_{j-1}}^{H_j}\circ\dots\circ\Phi_{H_0}^{H_1})(y) \in T^{jc}\cdot CF^*(H_j)\,,$$
  whence
  $$\Phi_{H_0}(y) = \Phi_{H_j}\big((\Phi_{H_{j-1}}^{H_j}\circ\dots\circ\Phi_{H_0}^{H_1})(y)\big) \in T^{jc}\varinjlim_i CF^*(H_i)$$
  for all $j$, that is
  $$\Phi_{H_0}(y) \in \bigcap_{\lambda \geq 0}T^\lambda\varinjlim_i CF^*(H_i) \subset \ker\wh\cdot\,.$$
\end{proof}

We now fix a $C^2$-small negative Morse function $G \fc M \to \R$, let $(s_i)_{i\geq0}$ be a strictly decreasing sequence of positive numbers $<1$ converging to zero, and put $G_i=s_iG$. Then $(G_i)_{i\geq0}$ is an acceleration datum for $M$. The weighted Floer complexes $CF^*(G_i)$ are all isomorphic to the Morse complex $CM^*(G)\otimes\Lambda_{\geq0}$ with the differential weighted by suitable powers of $T$, which depend on $i$, while the continuation map $CF^*(G_i)\to CF^*(G_{i+1})$ is given by $\Crit G\ni p \mapsto T^{G_{i+1}(p)-G_i(p)}p$. It follows that $\varinjlim_i CF^*(G_i)$ is canonically isomorphic to $CM^*(G)\otimes \Lambda_{>0}$ with the usual unweighted Morse differential; in particular it is a complete $\Lambda_{\geq0}$-module. The natural map $CM^*(G)\otimes\Lambda_{\geq0}=CF^*(G_j) \to \varinjlim_i CF^*(G_i) = CM^*(G)\otimes\Lambda_{>0}$ is given by $\Crit G\ni p\mapsto T^{-G_j(p)}p$.

In particular $SH^*(M) = H\big(CM^*(G)\otimes\Lambda_{>0}\big)=HM^*(G)\otimes\Lambda_{>0}$ and if $q$ is a maximum of $G$, then $[\Vol]\otimes T^{-G_0(q)} \in SH^*(M)$ is represented by $T^{-G_0(q)}q \in CM^*(G)\otimes\Lambda_{>0}$ and is the image of $q \in CF^*(G_0)$ under the natural map $CF^*(G_0) \to \varinjlim_i CF^*(G_i)$. Note that we can assume that $G_i \leq H_i$ for all $i$, therefore as in Section \ref{ss:Floer_data_rays} there exists a filling between the acceleration data $(G_i)_i,(H_i)_i$ and in particular we have the corresponding continuation maps $\Phi_{G_i}^{H_i} \fc CF^*(G_i) \to CF^*(H_i)$.
\begin{lemma}\label{lem:exists_mu_cohomologous}
  Let $q \in CF^{2n}(G_0)$ be a maximum. Then there is $\mu \geq -G_0(q)$ such that $T^{\mu+G_0(q)}\Phi_{G_0}^{H_0}(q)$ is cohomologous in $CF^*(H_0)$ to a linear combination of upper orbits.
\end{lemma}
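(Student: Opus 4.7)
The plan is to use cohomological spectral invariants in the classical Floer theory, convert the resulting action-filtered cocycle back to the weighted complex via the isomorphism of Section~\ref{sss:class_weighted_HF}, and then rescale by a suitable power of $T$ to make everything live in $CF^*(H_0)$ over $\Lambda_{\geq 0}$.

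First, I would exploit monotonicity of the cohomological spectral invariant: because continuation maps are action-nondecreasing (equivalently, the weighted continuations carry nonnegative $T$-exponents), the inequality $H_0 \geq F$ yields $c^\vee([\Vol], H_0) \geq c^\vee([\Vol], F) > L$. Unwinding the definition of $c^\vee$, I obtain a classical cocycle $\tilde\alpha \in CF^{2n}_{\cl}(H_0; \Lambda)$, representing $[\Vol]$ under PSS, such that every orbit in its support has action strictly greater than $L$. The acceleration datum was built so that at index $2n$, upper orbits have action $> L$ and lower orbits have action $< L$; consequently $\tilde\alpha = \sum c_i x_i$ with $c_i \in \Lambda$ and each $x_i$ an upper orbit.

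Next, I would transport $\tilde\alpha$ back to the weighted complex using the isomorphism $\varphi_{H_0}\colon CF^*(H_0) \otimes \Lambda \to CF^*_{\cl}(H_0; \Lambda)$, $\varphi_{H_0}(x) = T^{-\cA_{H_0}(x)} x$. Its inverse yields the cocycle $\varphi_{H_0}^{-1}(\tilde\alpha) = \sum c_i T^{\cA_{H_0}(x_i)} x_i$, which is still supported on upper orbits. Chasing $[\Phi_{G_0}^{H_0}(q)]$ through the commutative diagram relating weighted continuation, $\varphi_\bullet$, classical continuation, and PSS, and using $\varphi_{G_0}(q) = T^{G_0(q)} q_{\cl}$ (with $q_{\cl}$ representing $[\Vol]$ classically), I would identify $\PSS \circ \varphi_{H_0}([\Phi_{G_0}^{H_0}(q)]) = T^{G_0(q)}[\Vol]$, which matches $\PSS \circ \varphi_{H_0}$ applied to $T^{G_0(q)}[\varphi_{H_0}^{-1}(\tilde\alpha)]$. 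Hence there exists $\gamma \in CF^*(H_0)\otimes \Lambda$ with
\[
	\Phi_{G_0}^{H_0}(q) - T^{G_0(q)} \varphi_{H_0}^{-1}(\tilde\alpha) = d\gamma\,.
\]

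Finally, I would multiply by $T^{\mu + G_0(q)}$ to obtain
\[
	T^{\mu + G_0(q)}\Phi_{G_0}^{H_0}(q) - T^{\mu + 2G_0(q)}\varphi_{H_0}^{-1}(\tilde\alpha) = d\bigl(T^{\mu + G_0(q)}\gamma\bigr)\,.
\]
The $T$-exponents appearing in the middle term are of the form $\mu + 2G_0(q) + \cA_{H_0}(x_i) > \mu + 2G_0(q) + L$, and those in $T^{\mu + G_0(q)}\gamma$ are bounded below by $\mu + G_0(q) + \nu$, where $\nu$ is the (finite) minimum $T$-valuation of $\gamma$. Choosing $\mu$ large enough (while still satisfying $\mu \geq -G_0(q)$) makes all exponents nonnegative, so the identity descends to $CF^*(H_0)$, and the right-hand side $T^{\mu + 2G_0(q)}\varphi_{H_0}^{-1}(\tilde\alpha)$ is a $\Lambda_{\geq 0}$-linear combination of upper orbits, as required.

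The main technical obstacle is the middle step, namely identifying the cohomology class of $\Phi_{G_0}^{H_0}(q)$ with $T^{G_0(q)}$ times that of $\varphi_{H_0}^{-1}(\tilde\alpha)$. This requires a careful commutative-diagram chase verifying that the weighted/classical isomorphisms intertwine continuation maps and PSS, together with the explicit identification of the shift by $T^{G_0(q)}$ coming from $\varphi_{G_0}(q) = T^{G_0(q)} q_{\cl}$. Once this compatibility is in hand, the remainder of the argument is bookkeeping of $T$-exponents.
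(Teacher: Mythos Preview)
Your proposal is correct and follows essentially the same approach as the paper: both arguments use the inequality $c^\vee([\Vol],H_0)>L$ (via $H_0\geq F$ and monotonicity) to produce a classical representative of $[\Vol]$ supported on upper orbits, compare it to the continuation image of $q$ through the PSS/continuation compatibility, and then multiply by a sufficiently large power of $T$ to land back in the $\Lambda_{\geq 0}$-complex. The only cosmetic difference is the order of operations: the paper finds the coboundary $b$ in $CF^*_{\cl}(H_0;\F)$ first and then pushes everything through the isomorphism $\psi$ (which avoids tracking $\Lambda$-valuations until the very end), whereas you work in $CF^*(H_0)\otimes\Lambda$ throughout and bound valuations at the last step.
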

\noindent Assuming this for a moment, let us proceed to
\begin{proof}[Proof of Claim \ref{claim:T_mu_Vol_in_ker_res}]
We have the following commutative diagram:
\begin{equation}\label{eqn:diagram_G_0_H_0}
\xymatrix{CF^*(G_0) \ar[r] \ar[d]^{\Phi_{G_0}^{H_0}} & \wh{\tel}_i CF^*(G_i) \ar[r] \ar[d] & \wh{\varinjlim_i} CF^*(G_i) = \varinjlim_i CF^*(G_i) \\
            CF^*(H_0) \ar[r] & \wh{\tel}_i CF^*(H_i)\ar[r] & \wh{\varinjlim_i} CF^*(H_i)}
\end{equation}
The middle vertical arrow comes from the $2$-ray of Floer complexes coming from the aforementioned filling. The top left arrow is the inclusion of $CF^*(G_0)$ into the second direct sum in $\tel_i CF^*(G_i) = \bigoplus_i CF^*(G_i)[1] \oplus \bigoplus_i CF^*(G_i)$, followed by completion, and similarly for the bottom left arrow. It follows that the square commutes. The top right arrow is the completion of the composition of the projection of $\tel_i CF^*(G_i)$ onto $\bigoplus_i CF^*(G_i)$ followed by the natural map into the direct limit. Varolg\"une\c s proves in \cite{varolgunes2018mayer} that this arrow is a quasi-isomorphism. The same considerations hold for the bottom right arrow. Note that the composition of the top arrows is the natural map into the direct limit, and the same is true for the composition of the bottom arrows, except the completion is then tagged on.

Fix $\mu$ as in Lemma \ref{lem:exists_mu_cohomologous}. We will show that $T^{\mu}[\Vol]$ is killed by $SH^*(M) \to SH^*(\ol{W^c})$. The element $T^{\mu}[\Vol] \in SH^{2n}(M) = H^{2n}(M)\otimes \Lambda_{>0}=H^{2n}\big(\wh\tel_i CF^*(G_i)\big)$ is represented by the image of $T^{\mu+G_0(q)}q\in CF^{2n}(G_0)$ under the top left arrow. It follows that the image of $T^{\mu}[\Vol]$ by $SH^*(M) \to SH^*(\ol{W^c})$ is represented by the image of $\Phi_{G_0}^{H_0}(T^{\mu+G_0(q)}q) \in CF^{2n}(H_0)$ in $\wh{\varinjlim_i} CF^*(H_i)$ by the map which is the composition in the top row of \eqref{eqn:diagram_G_0_H_0}. By Lemma \ref{lem:exists_mu_cohomologous}, $\Phi_{G_0}^{H_0}(T^{\mu+G_0(q)}q) = T^{\mu+G_0(q)}\Phi_{G_0}^{H_0}(q)$ is cohomologous to a linear combination of upper orbits of $H_0$, and thus by Lemma \ref{lem:lin_comb_upper_orbits_dies_completed_limit}, this linear combination is killed by the map $CF^*(H_0) \to \wh{\varinjlim_i} CF^*(H_i)$. It follows that the image of $T^{\mu+G_0(q)}\Phi_{G_0}^{H_0}(q)$ in $\wh{\tel}_i CF^*(H_i)$ is cohomologous to zero, and thus we have our claim.
\end{proof}

\begin{proof}[Proof of Lemma \ref{lem:exists_mu_cohomologous}]
  Consider the following commutative diagram:
  $$\xymatrix{ & & CF^*_{\text{\rm uw}}(G_0;\F)\ar[d] \ar[rd] & \\
               CF_{\text{\rm uw},\geq0}^*(G_0;\Lambda)\ar[rr] \ar[dd]\ar[rd] & & CF^*_{\text{\rm uw}}(G_0;\Lambda)\ar[rd]\ar[dd]|\hole & CF^*_{\text{\rm uw}}(H_0;\F) \ar[d]\\
               & CF_{\text{\rm uw},\geq0}^*(H_0;\Lambda) \ar[dd] \ar[rr]& & CF^*_{\text{\rm uw}}(H_0;\Lambda)\ar[dd]\\
               CF^*(G_0) \ar[rr]|\hole \ar[rd] & & CF^*(G_0)\otimes\Lambda \ar[rd]& \\
               & CF^*(H_0)\ar[rr] & & CF^*(H_0)\otimes\Lambda}$$
  Here for a nondegenerate Hamiltonian $E$ we have a valuation on $CF^*_{\text{\rm uw}}(E;\Lambda)$ given by $T^\lambda x\mapsto \lambda+\cA_E(x)$ and $CF^*_{\text{\rm uw},\geq0}(E;\Lambda)$ stands for the $\Lambda_{\geq0}$-submodule of elements with nonnegative valuation. The oblique arrows are continuation maps in respective Floer theories (unweighted or $T$-weighted), the upper vertical arrows are the embeddings induced by the field extension $\F \subset \Lambda$, the horizontal arrows are inclusions, while the rest of the vertical arrows are the isomorphism $\psi$ from Proposition \ref{prop:Weighted_vs_classical_CF} in Section \ref{sss:class_weighted_HF}. For instance $\psi \fc CF^*_{\text{\rm uw}}(G_0;\Lambda) \to CF^*(G_0)\otimes \Lambda$ is defined by $\psi(x) =x\otimes T^{\cA_{G_0}(x)}$, and similarly for $H_0$. It is easy to check that $\psi$ maps $CF^*_{\text{\rm uw},\geq0}(G_0;\Lambda)$ isomorphically onto $CF^*(G_0)\subset CF^*(G_0)\otimes \Lambda$ and same for $H_0$.

  Let now $q \in CF^{2n}_{\text{\rm uw}}(G_0;\F)$ be a maximum, that is a representative of the volume class. Since continuation maps commute on cohomology with the PSS isomorphisms, it follows that $\wt y:=\Phi(q) \in CF^{2n}_{\text{\rm uw}}(H_0;\F)$ also represents the volume class, where $\Phi \fc CF^*_{\text{\rm uw}}(G_0;\F) \to CF^*_{\text{\rm uw}}(H_0;\F)$ is the unweighted continuation map. On the other hand, Lemma \ref{lem:heavy_implies_high_spectral_invts} and the assumption $H_0\geq F$ imply that $c^\vee([\Vol],H_0) > L$, since spectral invariants are monotone with respect to the Hamiltonian, therefore there is a representative of the volume class $y \in CF^{2n}_{\text{\rm uw}}(H_0;\F)$ consisting of orbits of action $> L$, and by our choice of acceleration datum this forces $y$ to be a linear combination of the upper orbits of $H_0$. It follows that there is $b\in CF^{2n-1}_{\text{\rm uw}}(H_0;\F)$ such that $\wt y - y = d_{\text{\rm uw}}b$. We can now look at this equation in $CF^*_{\text{\rm uw}}(H_0;\Lambda)$. The elements $\wt y,b$ do not necessarily lie in $CF^*_{\text{\rm uw},\geq 0}(H_0;\Lambda)$. Let $\mu\geq - G_0(q)>0$ be such that $T^\mu\wt y,T^\mu b \in CF^*_{\text{\rm uw},\geq 0}(H_0;\Lambda)$. Applying $\psi$ and noting that it is $\Lambda_{\geq0}$-linear and commutes with continuation maps, we obtain the following equation in $CF^*(H_0)$:
  $$d(T^\mu\psi(b)) = \psi(T^\mu\Phi(q)) - \psi(T^\mu y) = T^\mu\Phi_{G_0}^{H_0}(\psi(q)) - \psi(T^\mu y)=T^{\mu+G_0(q)}\Phi_{G_0}^{H_0}(q) - \psi(T^\mu y)\,.$$
  This means that $T^{\mu+G_0(q)}\Phi_{G_0}^{H_0}(q)$ is cohomologous in $CF^*(H_0)$ to $\psi(T^\mu y)$, which is a linear combination of the upper orbits of $H_0$, as claimed.
\end{proof}

\section{Centerpoint theorems for IVMs}\label{s:abstr_centerpt_thm_IVMs}

In this section, we first formulate an abstract centerpoint theorem for IVMs, from which we will first deduce Karasev's result, Theorem \ref{thm:top_Tverberg}, as well as our Theorem \ref{thm:symplectic_Tverberg}, which is used in Section \ref{ss:pf_thm_torus_cross} in the proof of the symplectic rigidity result, Theorem \ref{thm:torus_cross}, and eventually Gromov's result, Theorem \ref{thm:Gromov_torus}.
\begin{thm}\label{thm:central_point} Let $Y$ be a compact metric space of covering dimension $d$, let $A$ be an algebra, and let $I \in \cI(A)$ be a graded ideal with $I^{*(d+1)} \neq 0$. For an $A$-IVM $\nu$ on $Y$ put
$$\cX_{I,\nu} = \{Z\subset Y\,|\,Z\text{ compact with }I\subset \nu(Z)\}\,.$$
Then $\bigcap_{Z \in \cX_{I,\nu}}Z \neq \varnothing$.
\end{thm}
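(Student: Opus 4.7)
The plan is to argue by contradiction, combining Ostrand's theorem on the covering dimension of metric spaces with the formal properties of an IVM. Assuming $\bigcap_{Z\in\cX_{I,\nu}}Z=\varnothing$, I use the fact that each $Z$ is a compact subset of the compact space $Y$ to extract, via the finite intersection property, finitely many $Z_1,\dots,Z_N\in\cX_{I,\nu}$ whose intersection is already empty. Their complements $V_i=Y\setminus Z_i$ then form a finite open cover of $Y$.

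Next, I invoke Ostrand's theorem: since $Y$ is a metric space of covering dimension at most $d$, the cover $\{V_i\}$ admits a (finite) open refinement $\{W_{l,j}\}_{l=0,\dots,d;\,j}$ with the property that for each fixed $l$ the sets $\{W_{l,j}\}_j$ are pairwise disjoint, and each $W_{l,j}$ is contained in some $V_{\sigma(l,j)}=Y\setminus Z_{\sigma(l,j)}$. Define the compact sets
$$L_l:=Y\setminus\bigsqcup_j W_{l,j},\qquad A_{l,j}:=Y\setminus W_{l,j},$$
so that $L_l=\bigcap_j A_{l,j}$. For each fixed $l$, pairwise disjointness of the $W_{l,j}$ forces $A_{l,j}\cup A_{l,j'}=Y$ whenever $j\neq j'$, while the inclusion $Z_{\sigma(l,j)}\subset A_{l,j}$ together with monotonicity gives $I\subset\nu(A_{l,j})$ for every $j$. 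Applying the multi-set intersection property for compact sets — which holds for the extended IVM by Remark \ref{rem:regularization_IVM} and a short induction from the two-set version using the footnote of Definition \ref{def:IVM} — I conclude
$$I\subset\bigcap_j\nu(A_{l,j})=\nu(L_l)\quad\text{for each }l=0,\dots,d.$$

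To close the argument, observe that because $\{W_{l,j}\}$ covers $Y$, one has $\bigcap_{l=0}^d L_l=\varnothing$. Iterating the (extended) multiplicativity of $\nu$ on compact sets then yields
$$I^{*(d+1)}\subset\nu(L_0)*\nu(L_1)*\cdots*\nu(L_d)\subset\nu\bigl(L_0\cap\cdots\cap L_d\bigr)=\nu(\varnothing)=0,$$
contradicting the standing hypothesis $I^{*(d+1)}\neq 0$.

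I expect the principal difficulty to be the combinatorial-topological step of producing the color-disjoint refinement via Ostrand's theorem; by contrast, once such a refinement is in hand, the algebraic manipulations through the IVM axioms are essentially formal. A minor technical point is that the intersection and multiplicativity axioms in Definition \ref{def:IVM} are stated for open sets, so the proof relies on their compact-set extensions from Remark \ref{rem:regularization_IVM}; however, that is exactly what the regularization procedure is designed to provide.
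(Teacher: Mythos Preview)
Your proof is correct and essentially identical to the paper's own argument: the paper invokes the Palais lemma (which is the same result you call Ostrand's theorem) to produce the color-disjoint refinement $\{V_{ij}\}$, then uses monotonicity, the intersection axiom, and multiplicativity in exactly the way you do. The only cosmetic difference is that you first pass to a finite subcover via the finite intersection property before refining, whereas the paper refines directly; the resulting computation is the same.
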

\noindent We refer to a point in the above intersection as a \tb{centerpoint} of $\nu$ with respect to $I$.
\begin{proof}
  Assume on the contrary that $\bigcap_{Z\in\cX_{I,\nu}}Z = \varnothing$. Then $(Z^c)_{Z\in\cX_{I,\nu}}$ is an open covering of $Y$, therefore by the assumption on the covering dimension on $Y$ and by Milnor's lemma (see \cite[Lemma 2.4]{Palais_1966_Homotopy_theory_of_infinite_dimensional_manifolds}), this covering admits a finite refinement $\{V_{ij}\}_{i,j}$ where $i=0,\dots,d$ and $V_{ij} \cap V_{ij'} = \varnothing$ if $j\neq j'$. Let $Y_i = \bigcup_j V_{ij}$. For each $i,j$ there is $Z\in\cX_{I,\nu}$ with $V_{ij}\subset Z^c$, therefore by monotonicity $I\subset \nu(Z)\subset \nu(V_{ij}^c)$. Since $Y=V_{ij}^c\cup V_{ij'}^c$ for any $j\neq j'$, by the intersection property we have
  $$\textstyle\nu(Y_i^c) = \nu\Big(\bigcap_j V_{ij}^c\Big) = \bigcap_j \nu(V_{ij}^c)\supset I\,,$$
  and by multiplicativity and normalization we obtain
  $$\textstyle 0\neq I^{*(d+1)} \subset \prod_{i=0}^{d}\nu(Y_i^c) \subset \nu\left(\bigcap_{i=0}^dY_i^c\right) = \nu(\varnothing) = 0\,,$$
  which is a contradiction.
\end{proof}
\noindent This abstract theorem has the following consequence, which will be used for the proof of Gromov's version of the centerpoint theorem \ref{thm:Gromov_central_point}.
\begin{coroll}\label{thm-karas}Under the assumptions of Theorem \ref{thm:central_point}, and under the additional assumption that $A$ is graded Noetherian, a centerpoint $y_0 \in Y$ satisfies $I\not\subset\nu(Y\setminus\{y_0\})$.
\end{coroll}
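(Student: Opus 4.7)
The plan is to upgrade the conclusion of Theorem \ref{thm:central_point} from ``$y_0$ lies in every compact $Z$ with $I\subset\nu(Z)$'' to the stronger quantitative assertion about $\nu(Y\setminus\{y_0\})$, using graded Noetherianity to pass from an inclusion of ideals to the behavior of finitely many generators.

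First, I would argue by contradiction: assume $I\subset\nu(Y\setminus\{y_0\})$. The set $U:=Y\setminus\{y_0\}$ is open, so by the formula recorded in Remark \ref{rem:regularization_IVM} we have
$$\nu(U)=\bigcup_{K\text{ compact}\subset U}\nu(K)\,.$$
Since $A$ is graded Noetherian, the graded ideal $I$ is finitely generated: pick homogeneous generators $a_1,\dots,a_m\in I$. For each $i$, the containment $a_i\in\nu(U)$ together with the formula above yields a compact set $K_i\subset U$ with $a_i\in\nu(K_i)$.

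Next, set $K:=K_1\cup\dots\cup K_m$, which is compact and still contained in $U=Y\setminus\{y_0\}$. The monotonicity property of the extension of $\nu$ to compact sets (inherited from monotonicity on open sets through the formula $\nu(K')=\bigcap_{V\text{ open}\supset K'}\nu(V)$) gives $a_i\in\nu(K_i)\subset\nu(K)$ for every $i$. Since $\nu(K)$ is a (graded) ideal, it must contain the ideal generated by $a_1,\dots,a_m$, namely $I$. Therefore $K\in\cX_{I,\nu}$.

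By the defining property of the centerpoint $y_0$ we then have $y_0\in K$, which contradicts $K\subset Y\setminus\{y_0\}$. Hence $I\not\subset\nu(Y\setminus\{y_0\})$, as required. The only nontrivial ingredient is the Noetherianity assumption, which is precisely what converts the a priori ``pointwise'' statement ``each $a\in I$ lies in $\nu(K_a)$ for some compact $K_a\subset Y\setminus\{y_0\}$'' into the uniform statement ``$I\subset\nu(K)$ for a single compact $K\subset Y\setminus\{y_0\}$''; without it one cannot merge the compact sets $K_i$ into a single contradiction-producing compact. The rest of the argument is a routine application of the monotonicity property and the regularization relating $\nu$ on open and compact sets.
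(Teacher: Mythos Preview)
Your proof is correct and follows essentially the same strategy as the paper's: argue by contradiction, use continuity to locate $I$ inside $\nu$ of a single compact subset of $Y\setminus\{y_0\}$, and invoke the centerpoint property. The only cosmetic difference is that the paper applies continuity directly to the chain $Y\setminus\ol B_{y_0}(1/i)$ and then uses the ascending chain condition to get stabilization, whereas you pass through the regularization formula of Remark~\ref{rem:regularization_IVM} and use the equivalent formulation of graded Noetherianity as finite generation of $I$; either way, the same compact witness $K\in\cX_{I,\nu}$ with $y_0\notin K$ emerges.
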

\begin{proof}
  Otherwise $I\subset \nu(Y\setminus\{y_0\})$, therefore by continuity
  $$I\subset \nu(Y\setminus\{y_0\}) = \nu\bigg(\bigcup_{i\in \N}Y\setminus \ol B_{y_0}(\tfrac 1 i)\bigg) = \bigcup_{i\in \N}\nu(Y\setminus \ol B_{y_0}(\tfrac 1 i))\,,$$
  and by the graded Noetherian property the ascending chain of graded ideals in the union stabilizes, which means that there is $i_0 \in \N$ such that $I \subset \nu(Y\setminus \ol B_{y_0}(\tfrac 1 {i_0}))$. By monotonicity it follows that $I\subset \nu(Y\setminus B_{y_0}(\tfrac 1 {i_0}))$, which, thanks to the fact that $y_0$ is a centerpoint of $\nu$ with respect to $I$, means that $y_0 \in Y\setminus B_{y_0}(\tfrac 1 {i_0})$, which is absurd.
\end{proof}

\noindent We will derive the following from Theorem \ref{thm:central_point}:
\begin{coroll}\label{cor:big_fiber_cont_maps_into_dim_d_metric}
Let $X$ be a compact Hausdorff space, let $A$ be an algebra, and let $\mu$ be an $A$-IVM. Let $I \in \cI(A)$ be a graded ideal such that $I^{*(d+1)} \neq 0$ for some $d \geq 1$. Let $Y$ be a metric space of covering dimension $d$. Then any continuous map $f \fc X \to Y$ has a fiber which intersects all the members of $\cX_{I,\mu}$.
\end{coroll}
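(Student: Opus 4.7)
\begin{prf}[Proof proposal]
The plan is to push the given $A$-IVM $\mu$ on $X$ forward along $f$ and then apply the abstract centerpoint theorem \ref{thm:central_point} to the resulting IVM on (the image of) $Y$. First, since $X$ is compact Hausdorff and $f$ is continuous, $f(X)$ is a compact, hence closed, subset of the metric space $Y$, and closed subsets of a metric space of covering dimension $d$ have covering dimension $\leq d$. Replacing $Y$ by $f(X)$ and $d$ by the covering dimension $d'\leq d$ of $f(X)$, the hypothesis $I^{*(d+1)}\neq 0$ still gives $I^{*(d'+1)}\supset I^{*(d+1)}\neq 0$ (fewer products yield a larger ideal), so without loss of generality we may assume $Y$ is a compact metric space and $f$ is surjective.

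Next, define $\nu(V):=\mu(f^{-1}(V))$ for open $V\subset Y$. The six axioms of Definition \ref{def:IVM} for $\nu$ follow routinely from those for $\mu$ and from the fact that $f^{-1}$ commutes with unions and intersections and preserves disjointness; surjectivity of $f$ is used precisely in the intersection axiom, to convert ``$V\cup V'=Y$'' into ``$f^{-1}(V)\cup f^{-1}(V')=X$''. Theorem \ref{thm:central_point} applied to $\nu$ then produces a centerpoint $y_0\in Y$ of $\nu$ with respect to $I$.

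Finally, for an arbitrary $Z\in\cX_{I,\mu}$, I claim $f(Z)\in\cX_{I,\nu}$. Using the extension of $\nu$ to compact sets as in Remark \ref{rem:regularization_IVM},
\begin{equation*}
\nu(f(Z)) \;=\; \bigcap_{V\text{ open}\supset f(Z)}\mu(f^{-1}(V)) \;\supset\; \mu(Z) \;\supset\; I,
\end{equation*}
since for every such $V$ the preimage $f^{-1}(V)$ is open in $X$ and contains $Z$, so $\mu(Z)\subset\mu(f^{-1}(V))$ by the extended monotonicity in Remark \ref{rem:regularization_IVM}. By the centerpoint property $y_0\in f(Z)$, so there exists $z\in Z$ with $f(z)=y_0$, which yields $f^{-1}(y_0)\cap Z\neq\varnothing$, as required.

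I do not expect a genuine obstacle in this argument. The only mildly technical points are the reduction to a compact, surjective $f$ (with the corresponding covering-dimension bound for $f(X)$) and the verification of the IVM axioms for $\nu$; the heart of the matter is simply that a centerpoint of the pushforward automatically detects every set whose $\mu$-measure contains $I$.
\end{prf}
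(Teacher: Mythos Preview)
Your proof is correct and follows essentially the same route as the paper's: reduce to $Y$ compact and $f$ onto, apply Theorem \ref{thm:central_point} to the pushforward $\nu=f_*\mu$, and observe that $I\subset\mu(Z)\subset\nu(f(Z))$ for every $Z\in\cX_{I,\mu}$, so the centerpoint $y_0$ lies in every $f(Z)$. The paper phrases the last step using $Z\subset f^{-1}(f(Z))$ and monotonicity of the extended $\mu$ directly, while you unfold the definition of $\nu$ on compacts; these are the same argument.
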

\noindent We call such a fiber a \tb{central fiber of $f$ with respect to $I$}. To deduce Corollary \ref{cor:big_fiber_cont_maps_into_dim_d_metric} from Theorem \ref{thm:central_point}, we need the notion of pushforward for IVMs.
\begin{defin}\label{def:pushforward}
  Let $A$ be an algebra, let $f \fc X \to Y$ be a continuous map, and let $\mu$ be an $A$-IVM on $X$. The \tb{pushforward of $\mu$ by $f$} is the set function $f_*\mu$ defined on the open sets of $Y$ by $f_*\mu(V) = \mu(f^{-1}(V))$.
\end{defin}
\begin{rem}\label{rem:pushforward_defined_more_generally}
  \begin{itemize}
    \item The pushforward construction makes sense for any $\cI(A)$-valued function defined on open or closed subsets of $X$, not necessarily an $A$-IVM. We will use this below.
    \item It is easy to see that $f_*\mu$ is an $A$-IVM on $Y$. If $X,Y$ are Hausdorff and $X$ is in addition compact, then the extension of $f_*\mu$ to the compact subsets of $Y$, as described in Remark \ref{rem:regularization_IVM}, coincides with the pushforward of the extension of $\mu$ to the compact subsets of $X$.
  \end{itemize}
\end{rem}
\begin{proof}[Proof of Corollary \ref{cor:big_fiber_cont_maps_into_dim_d_metric}] Without loss of generality assume that $Y$ is compact and that $f$ is onto. Applying Theorem \ref{thm:central_point} to the $A$-IVM $\nu=f_*\mu$, we obtain a point $y_0$ contained in any compact $Y'\subset Y$ with $I \subset \nu(Y')$. If $Z \in \cX_{I,\mu}$, then, since $Z \subset f^{-1}(f(Z))$, we have
$$I \subset \mu(Z) \subset \mu(f^{-1}(f(Z)) = \nu(f(Z))\,,$$
therefore $y_0 \in f(Z)$, as claimed.
\end{proof}

Karasev's topological centerpoint theorem \ref{thm:top_Tverberg} follows from a trick from toric geometry appearing in Karasev's original proof \cite{karasev2014covering}, in combination with the following result, which itself easily follows from Corollary \ref{cor:big_fiber_cont_maps_into_dim_d_metric}.
\begin{thm}\label{thm:cont_maps_CP_n_Y_d_fiber_proj_subspaces}Let $n=p(d+1)$, where $p,d$ are positive integers. Then for any continuous map $g \fc \C P^n \to Y$, where $Y$ is a metric space of covering dimension $d$, there exists $y_0 \in Y$ such that $g^{-1}(y_0)$ intersects all the $pd$-dimensional projective subspaces of $\C P^n$.
\end{thm}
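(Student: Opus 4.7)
The plan is to apply Corollary \ref{cor:big_fiber_cont_maps_into_dim_d_metric} with $X = \C P^n$, with $A = H^*(\C P^n;\F) = \F[h]/(h^{n+1})$ (where $|h| = 2$), with $\mu$ the cohomology IVM $\mu_{\mathrm{coh}}$, and with the graded ideal $I = (h^p) \subset A$. Two facts must be verified: (i) $I^{*(d+1)} \neq 0$, the algebraic hypothesis of the corollary; and (ii) every $pd$-dimensional complex projective subspace $L \subset \C P^n$ belongs to $\cX_{I,\mu_{\mathrm{coh}}}$, i.e.\ satisfies $I \subset \mu_{\mathrm{coh}}(L)$. Granting both, the corollary produces a central fiber $g^{-1}(y_0)$ intersecting every member of $\cX_{I,\mu_{\mathrm{coh}}}$, and in particular every $pd$-dimensional projective subspace, which is exactly the conclusion.

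Item (i) is immediate from $n = p(d+1)$: the product $I^{*(d+1)}$ contains $h^{p(d+1)} = h^n$, which is the nonzero top class. For item (ii), take $U$ to be a small tubular neighborhood of $L$ in $\C P^n$. Since the complex codimension of $L$ is $n - pd = p$, the complement $\C P^n \setminus U$ deformation retracts onto a complementary projective subspace $L' \cong \C P^{p-1}$ (e.g.\ if $L = \{[z_0:\dots:z_{pd}:0:\dots:0]\}$, collapse the first $pd+1$ coordinates). The restriction map $H^*(\C P^n) \to H^*(\C P^n \setminus U) \cong H^*(\C P^{p-1}) = \F[h]/(h^p)$ sends $h$ to $h$, so its kernel is precisely $(h^p) = I$. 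Intersecting over shrinking tubular neighborhoods (equivalently, using the closed-set extension from Remark \ref{rem:regularization_IVM}) yields $\mu_{\mathrm{coh}}(L) = I$, and in particular $I \subset \mu_{\mathrm{coh}}(L)$.

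The only substantive step is the cohomology computation in (ii), and even there the restriction map between the cohomology rings of projective spaces is a standard calculation, so I do not expect any real obstacle. In essence, the theorem is the observation that $\C P^n$ carries a natural IVM for which the family of $pd$-dimensional projective subspaces is captured by the graded ideal $(h^p)$ whose $(d+1)$-st power reaches the nonzero top degree, which is exactly the numerology Corollary \ref{cor:big_fiber_cont_maps_into_dim_d_metric} requires.
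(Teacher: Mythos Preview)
Your proof is correct and follows essentially the same approach as the paper: apply Corollary~\ref{cor:big_fiber_cont_maps_into_dim_d_metric} to the cohomology IVM on $\C P^n$ with the ideal $I=(h^p)$, checking that $I^{*(d+1)}\ni h^n\neq 0$ and that every $pd$-dimensional projective subspace lies in $\cX_{I,\mu}$. The only minor difference is in the verification of $I\subset\mu_{\mathrm{coh}}(L)$: you argue via the explicit deformation retract of $\C P^n\setminus L$ onto a complementary $\C P^{p-1}$ and read off the kernel of the restriction map, whereas the paper invokes Poincar\'e duality ($h^p$ is dual to $L$, hence vanishes on the complement of any neighborhood of $L$); both arguments establish the same inclusion.
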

\begin{proof}
  Let $\mu$ be the cohomology IVM on $\C P^n$, let $h \in H^2(\C P^n)$ be a generator and consider the graded ideal $I = \langle h^p\rangle$. Then $I^{\smile(d+1)}\neq 0$. Moreover, if $C\subset \C P^n$ is a $pd$-dimensional complex projective subspace, then, since $h^p$ is Poincar\'e dual to $C$, it follows that for every open neighborhood $U\supset C$ we have $h^p|_{\C P^n \setminus U}=0$, which means that $I\subset \mu(C)$, in particular $C \in \cX_{I,\mu}$, using the notation of Theorem \ref{thm:central_point}. Corollary \ref{cor:big_fiber_cont_maps_into_dim_d_metric} then implies that $g$ has a fiber intersecting all the members of $\cX_{I,\mu}$, and in particular each $pd$-dimensional projective subspace.
\end{proof}
We can now present
\begin{proof}[Proof of Theorem \ref{thm:top_Tverberg}]
  Consider a continuous map $f \fc \Delta^n \to Y$, where $Y$ is a metric space of covering dimension $d$ and $n=p(d+1)$. Consider the standard toric moment map $\Phi \fc \C P^n \to \Delta^n$, and let $g=f\circ\Phi$. Let $y_0 \in Y$ be the point whose existence is guaranteed by Theorem \ref{thm:cont_maps_CP_n_Y_d_fiber_proj_subspaces}. If $Z \subset \Delta^n$ is a $pd$-dimensional face, then $\Phi^{-1}(Z)$ is a $pd$-dimensional complex projective subspace, therefore
  $$\varnothing \neq g^{-1}(y_0)\cap \Phi^{-1}(Z) = \Phi^{-1}(f^{-1}(y_0))\cap\Phi^{-1}(Z) = \Phi^{-1}(f^{-1}(y_0)\cap Z)\,,$$
  whence $f^{-1}(y_0) \cap Z \neq \varnothing$, as claimed.
\end{proof}

\medskip

Next we prove Gromov's centerpoint theorem. We invite the reader to review the notion of rank, Definition \ref{def:ranks_algebras}.
\begin{thm} [Gromov, \cite{gromov2010singularities}]\label{thm:Gromov_central_point}
  Let $Y$ be a compact metric space of covering dimension $d$, let $A$ be a finite-dimensional algebra, and let $\mu$ be an $A$-IVM on $Y$. Then there is $y_0 \in Y$ such that
  $$\dim A/\mu\big(Y \setminus  \{y_0\}\big) \geq \rk_{d+1}A\,.$$
\end{thm}
\begin{proof} Put $r = \rk_{d+1}(A)$. By definition, $(A^{/r})^{*(d+1)} \neq 0$, therefore by Corollary \ref{thm-karas}, a centerpoint $y_0 \in Y$ of $\mu$ with respect to $A^{/r}$ satisfies $A^{/r} \not\subset \mu(Y \setminus \{y_0\})$. Therefore
$\dim A/\mu(Y \setminus \{y_0\}) \geq r$ by the definition of $A^{/r}$.\end{proof}

This result implies Gromov's Theorem \ref{thm:Gromov_torus}. For the proof we will need the following explicit calculation of ranks.

\begin{exam} \label{ex:rank_cohomology}(\cite[Section 4.1]{gromov2010singularities}) If $X$ is a closed oriented manifold, then Poincar\'e duality implies that any nonzero graded ideal in $H^*(X)$ must contain the orientation class $[X]$. If $X_1,\dots,X_d$ are closed oriented manifolds, define $m=\min_{1\leq i\leq d} \dim H^*(X_i)$, and $X = \prod_{i=1}^{d}X_i$. K\"unneth's formula implies that
$$H^*(X) = \bigotimes_{i=1}^d H^*(X_i)$$
as graded skew-commutative algebras. The natural inclusion map
$$\iota_i\fc H^*(X_i) \to H^*(X)\,, \qquad a \mapsto 1^{\otimes(i-1)} \otimes a \otimes 1^{\otimes(d-i)}$$
is a graded algebra morphism. If $K \subset H^*(X)$ is a graded ideal of codimension $<m$, then $\iota_i^{-1}(K)$ is a nonzero graded ideal in $H^*(X_i)$, which then contains $[X_i]$ by the above. It follows that $K$ contains $\iota_i([X_i]) = 1^{\otimes(i-1)} \otimes [X_i] \otimes 1^{\otimes(d-i)}$, and therefore so does $H^*(X)^{/m}$. Since
$$\prod_{i=1}^d\iota_i([X_i]) = [X_1]\otimes\dots\otimes[X_d] = [X] \neq 0\,,$$
it follows that
$$\rk_dH^*(X) \geq m=\min_{1\leq i\leq d} \dim H^*(X_i)\,.$$
If $n \geq p(d+1)$, then for the torus $\T^n = (\T^p)^d\times \T^{n-pd}$, we obtain
$$\rk_{d+1} H^*(\T^n) \geq \dim H^*(\T^p)=2^p\,.$$
\end{exam}

\begin{proof}[Proof of Theorem \ref{thm:Gromov_torus}]
  Recall that we have a map $f \fc \T^n \to Y$, where $Y$ is a metric space of covering dimension $d$ and $n \geq p(d+1)$. Without loss of generality assume that $f$ is onto and that $Y$ is compact. Let $\mu$ be the cohomology IVM on $\T^n$ and let $\nu = f_*\mu$. Theorem \ref{thm:Gromov_central_point} implies that there is $y_0 \in Y$ such that
  $$\dim \check H^*(\T^n)/\nu(Y \setminus  \{y_0\}) \geq \rk_{d+1} \check H^*(\T^n)\,.$$
  Example \ref{ex:rank_cohomology} implies that $\rk_{d+1}\check H^*(\T^n) \geq 2^p$, therefore
  $$\dim \check H^*(\T^n)/\mu(\T^n \setminus  f^{-1}(y_0)) \geq 2^p\,.$$
  By the definition in Example \ref{ex:coh_IVM} we have
  $$\mu(\T^n \setminus  f^{-1}(y_0)) = \ker\big(\check H^*(\T^n) \to \check H^*(f^{-1}(y_0))\big)\,,$$
  therefore
  $$\rk \big(\check H^*(\T^n) \to \check H^*(f^{-1}(y_0))\big) =\dim \check H^*(\T^n) - \dim \ker\big(\check H^*(\T^n) \to\check  H^*(f^{-1}(y_0))\big) \geq 2^p\,.$$
\end{proof}

We close this section with the proofs of Theorems \ref{thm:quantitative_nondisplaceable_fiber} and \ref{thm:symplectic_Tverberg}, for which we need the following observation on the relation between the pushforward construction, see Remark \ref{rem:pushforward_defined_more_generally}, and involutive maps.
\begin{prop}\label{prop:involutive_maps_push_IVQM_to_IVM}
  Let $(M,\omega)$ be a closed symplectic manifold and let $\pi \fc M \to B$ be a smooth involutive map. If $A$ is an algebra and $\tau$ is a $A$-IVQM on $M$, then $\pi_*\tau$ is an $A$-IVM on $B$.
\end{prop}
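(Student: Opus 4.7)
The plan is to verify each of the six axioms of an $A$-IVM for $\pi_*\tau$ in turn. Most of them follow mechanically from the set-theoretic fact that the preimage operation $\pi^{-1}$ commutes with arbitrary unions, intersections, and complements, preserves disjointness and inclusion, and sends covers of $B$ to covers of $M$. Concretely: normalization is immediate from $\pi^{-1}(\varnothing)=\varnothing$ and $\pi^{-1}(B)=M$; monotonicity, continuity, and additivity follow directly from the corresponding axioms for $\tau$ applied to preimages; and the intersection axiom for $\pi_*\tau$ on a pair $V,V'$ with $V\cup V'=B$ reduces to the intersection axiom for $\tau$ on the pair $\pi^{-1}(V),\pi^{-1}(V')$, whose union is $M$.

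The only nontrivial axiom is multiplicativity, since $\tau$ is only known to be quasi-multiplicative on commuting pairs. Thus the whole content of the proposition lies in showing that for \emph{any} open sets $V,V'\subset B$, the preimages $\pi^{-1}(V),\pi^{-1}(V')\subset M$ commute in the sense of Definition \ref{def:commuting_sets}, i.e.\ their complements $M\setminus\pi^{-1}(V)=\pi^{-1}(B\setminus V)$ and $M\setminus \pi^{-1}(V')=\pi^{-1}(B\setminus V')$ can be realized as the zero sets of Poisson-commuting functions $f,f'\in C^\infty(M,[0,1])$.

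I would produce $f,f'$ by pulling back from $B$. On the smooth manifold $B$, the standard partition-of-unity argument furnishes smooth non-negative functions $g,g'\in C^\infty(B,[0,\infty))$ with $g^{-1}(0)=B\setminus V$ and $g'^{-1}(0)=B\setminus V'$; composing with any smooth $\varphi\colon[0,\infty)\to[0,1]$ satisfying $\varphi^{-1}(0)=\{0\}$ (for instance $\varphi(t)=\tanh t$), I may assume $g,g'$ take values in $[0,1]$. Setting $f:=\pi^{*}g$ and $f':=\pi^{*}g'$, the defining property of an involutive map yields $\{f,f'\}=\{\pi^{*}g,\pi^{*}g'\}\equiv 0$, while $f^{-1}(0)=\pi^{-1}(g^{-1}(0))=\pi^{-1}(B\setminus V)$ and similarly for $f'$. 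Hence $\pi^{-1}(V),\pi^{-1}(V')$ commute, so quasi-multiplicativity of $\tau$ on this pair gives
$$\pi_*\tau(V)*\pi_*\tau(V')=\tau(\pi^{-1}(V))*\tau(\pi^{-1}(V'))\subset \tau(\pi^{-1}(V)\cap\pi^{-1}(V'))=\pi_*\tau(V\cap V'),$$
which is full multiplicativity for $\pi_*\tau$.

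The only step where a genuine idea is used is the reduction to commuting preimages and the pull-back construction exploiting involutivity; everything else is bookkeeping. I do not anticipate any real obstacle: the existence of the auxiliary functions $g,g'$ on $B$ is standard smooth topology, and the invariance/vanishing properties of $\pi_*\tau$ are not needed here, so the proof is essentially an axiom-by-axiom verification pivoting on this one geometric observation.
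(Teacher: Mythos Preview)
Your proof is correct and follows essentially the same approach as the paper: the paper likewise notes that all axioms except multiplicativity are automatic, and for multiplicativity invokes Example~\ref{ex:preimages_involutive_maps} (which asserts that preimages of open sets under an involutive map commute) before applying quasi-multiplicativity. Your argument simply unpacks the content of that example by explicitly constructing the Poisson-commuting functions as pullbacks from $B$.
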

\begin{proof}
  All the axioms of an $A$-IVM are satisfied automatically, except multiplicativity. If $U,U'\subset B$ are open, then by Remark \ref{rem:preimages_involutive_maps} their preimages Poisson commute, therefore quasi-multiplicativity implies
  $$\pi_*\tau(U)*\pi_*\tau(U') = \tau(\pi^{-1}(U))*\tau(\pi^{-1}(U')) \subset \tau(\pi^{-1}(U)\cap\pi^{-1}(U')) = \pi_*\tau(U\cap U')\,,$$
  as required.
\end{proof}

\begin{rem}
  Remark \ref{rem:pushforward_defined_more_generally} applies here as well, that is the pushforward of the extension of $\tau$ to compact subsets of $M$ coincides with the extension to compact subsets of $B$ of the pushforward of $\tau$, because $M$ and $B$ are Hausdorff and $M$ is in addition compact.
\end{rem}

\begin{coroll}\label{cor:cont_involutive_maps_push_IVQM_to_IVM}
  Let $(M,\omega)$ be a closed symplectic manifold.  Then for any continuous involutive map $f \fc M \to Y$ the pushforward $f_*\tau$ of an $A$-IVQM $\tau$ is an $A$-IVM.
\end{coroll}

\begin{proof}
  Factor $f$ as $M \xrightarrow{\pi} B \xrightarrow{\ol f} Y$, where $\pi$ is a smooth involutive map. Then $\pi_*\tau$ is an $A$-IVM on $B$ according to Proposition \ref{prop:involutive_maps_push_IVQM_to_IVM}. Therefore $f_*\tau = \ol f_*(\pi_*\tau)$ is an $A$-IVM on $Y$.
\end{proof}

\begin{proof}[Proof of Theorem \ref{thm:quantitative_nondisplaceable_fiber}]Apply Theorem \ref{thm:Gromov_central_point} to the $A$-IVM $f_*\tau$.
\end{proof}

\begin{proof}[Proof of Theorem \ref{thm:symplectic_Tverberg}]
  Without loss of generality assume that $Y$ is compact and that $f$ is onto. Let $\mu = f_*\tau$ and note that this is an $A$-IVM on $Y$ thanks to Corollary \ref{cor:cont_involutive_maps_push_IVQM_to_IVM}. Theorem \ref{thm:central_point} implies that there is $y_0 \in Y$ contained in every compact $Y' \subset Y$ such that $I \subset \mu(Y')$. If $Z \in \cX_{I,\tau}$, then
  $$I \subset \tau(Z) \subset \tau(f^{-1}(f(Z))) = \mu(f(Z))\,,$$
  therefore $y_0 \in f(Z)$, as claimed.
\end{proof}

\medskip
\noindent
{\bf Acknowledgments:} We thank Shachar Carmeli, Liran Shaul and Moshe White for useful discussions about higher category theory, completions of modules, and
combinatorial geometry, respectively. We thank Pierre-Alexandre Mailhot, Jordan Payette, and Felix Schlenk for very useful remarks on the manuscript, Gleb Smirnov for pointing out that Theorem \ref{thm:cont_maps_CP_n_Y_d_fiber_proj_subspaces}, which subsumes Theorem \ref{thm:top_Tverberg}, is an interesting result in its own right, and Umut Varolg\"une\c s for helpful comments on Sections \ref{subsubsec-multi} and \ref{subsubsec-LagIVQM}. The second author was a postdoc at the Technion when work on this project began, and he would like to thank Michael Entov for the hosting and support. Finally, we wish to thank the anonymous referee for many useful remarks and suggestions, which helped us to improve the exposition.

\bibliographystyle{abbrv}
\bibliography{SH_IVM1019-submit}

\noindent
\begin{tabular}{l}
{\bf Adi Dickstein} \\
Tel Aviv University \\
School of Mathematical Sciences\\
69978, Tel Aviv, Israel\\
{\em E-mail:}  \texttt{adi.dickstein@gmail.com}
\end{tabular}

\medskip

\noindent
\begin{tabular}{l}
{\bf Yaniv Ganor} \\
Holon Institute of Technology \\
School of Mathematical Sciences\\
52 Golomb Street\\
POB 305, Holon, 5810201\\
Israel\\
{\em E-mail:}  \texttt{ganory@gmail.com}
\end{tabular}

\medskip

\noindent
\begin{tabular}{l}
{\bf Leonid Polterovich} \\
Tel Aviv University \\
School of Mathematical Sciences\\
69978, Tel Aviv, Israel\\
{\em E-mail:}  \texttt{polterov@tauex.tau.ac.il}
\end{tabular}

\medskip

\noindent
\begin{tabular}{l}
{\bf Frol Zapolsky} \\
University of Haifa \\
Department of Mathematics \\
Faculty of Natural Sciences \\
3498838, Haifa, Israel \\
\&\\
MISANU \\
Kneza Mihaila 36 \\
Belgrade 11001\\
Serbia\\
{\em E-mail:}  \texttt{frol.zapolsky@gmail.com}
\end{tabular}

\end{document}